%
%
%

\documentclass[graybox]{svmult}  


\usepackage{type1cm}        
%
\let\latexput\put
\usepackage{pictex}
\let\put\pictexput
\let\put\latexput
\usepackage{makeidx}         
\usepackage{graphicx}        
\usepackage{multicol}        
\usepackage[bottom]{footmisc}

\usepackage[varvw]{newtxmath}       
\usepackage{amsrefs}

\newcommand{\Z}{{\mathbb Z}}
\newcommand{\R}{{\mathbb R}}

\newcommand{\C}{{\mathbb C}}

\newcommand{\CA}{{\mathcal A}}

\newcommand{\CB}{{\mathcal B}}
\newcommand{\CE}{{\mathcal E}}
\newcommand{\CF}{{\mathcal F}}
\newcommand{\CH}{{\mathcal H}}
\newcommand{\CJ}{{\mathcal J}}
\newcommand{\CK}{{\mathcal K}}
\newcommand{\CL}{{\mathcal L}}
\newcommand{\CO}{{\mathcal O}}

\newcommand{\CS}{{\mathcal S}}
\newcommand{\CT}{{\mathcal T}}
\newcommand{\CU}{{\mathcal U}}

\renewcommand{\vec}{\bm}
\newcommand{\Cone}{\operatorname{Cone}}
\newcommand{\ant}{\operatorname{ant}}

\newcommand{\Lie}{\operatorname{Lie}}
\newcommand{\Id}{\operatorname{Id}}
\newcommand{\GL}{\operatorname{GL}}
\newcommand{\Gr}{\operatorname{Gr}}
\newcommand{\lleft}{\operatorname{left}}
\newcommand{\rright}{\operatorname{right}}
\newcommand{\U}{\operatorname{U}}

\newcommand{\hol}{\operatorname{hol}}
\newcommand{\Hom}{\operatorname{Hom}}
\newcommand{\Sym}{\operatorname{Sym}}
\newcommand{\Int}{\operatorname{Horn}}
\newcommand{\Horn}{\operatorname{Horn}}
\newcommand{\tr}{\operatorname{tr}}
\newcommand{\edim}{\operatorname{edim}}
\newcommand{\rank}{\operatorname{rank}}
\newcommand{\full}{\operatorname{full}}

\renewcommand{\c}{{\mathfrak{c}}}
\newcommand{\g}{{\mathfrak{g}}}

\newcommand{\h}{{\mathfrak{h}}}

\renewcommand{\k}{{\mathfrak{k}}}

\newcommand{\p}{{\mathfrak{p}}}
\newcommand{\gl}{{\mathfrak{gl}}}
\renewcommand{\t}{{\mathfrak{t}}}
\renewcommand{\u}{{\mathfrak{u}}}

\renewcommand{\vec}{\bm}
\usepackage{tikz}
\usepackage{bm}
\usepackage{mathtools}


\makeindex             


\begin{document}


\title*{ Tensor product of holomorphic discrete series representations of $U(p,q)$ and quivers }
 \titlerunning{$\Horn(p,q)$}
\author{Velleda Baldoni and Mich\`ele Vergne}

\institute{Velleda Baldoni \at  Dipartimento di Matematica, Universita degli
studi di Roma Tor Vergata, Via della Ricerca Scientifica  1-00133 Roma,
Italy, \email{ baldoni@mat.uniroma2.it}
\and Mich\`ele Vergne \at Institut de Math\'ematiques de Jussieu-Paris Rive Gauche, Universit\'e Paris-Cit\'e,  Campus Sophie Germain, Paris 13, France,
\email{michele.vergne@imj-prg.fr}}
%
\maketitle

\begin{abstract}{Consider the convex cone of holomorphic orbits in the Lie algebra of $U(p,q)$.
As in the classical case of Hermitian matrices, the set of elliptic orbits obtained contained  in the  sum of two holomorphic orbits
can be described  by ``Horn inequalities".  Using representations of quivers, we give another proof  of these Horn recursion inequalities  obtained by P-E Paradan.
 We recall the implications of these inequalities for decomposition of tensor product of two representations of the holomorphic discrete series of the group $U(p,q)$.}
 \end{abstract}

\section{Introduction}
\label{Intro}

Let $(\lambda,\mu)$ be a couple of dominant weights for the compact group $G=U(n)$.
One wants to describe the irreducible representations $V_\nu$ of $G$ occurring
with non zero multiplicity $c_{\lambda,\mu}^{\nu}$ in $V_\lambda\otimes V_\mu$. Here $V_\lambda,V_\mu,V_\nu$ are the irreducible representations of $U(n)$ (or $GL(n)$)
with highest weights $\lambda,\mu,\nu$ respectively.

Let  $\Lie(G)$ be the Lie algebra of $G$. Then the space $\sqrt{-1}\Lie(G)$ is the space of $n\times n$ Hermitian matrices.
Let $T$ be the  Cartan subgroup of $G$ consisting of $n\times n$ diagonal matrices with entries of modulus $1$.
 If
$a=(a(1),\ldots, a(n))$ is  a sequence of $n$ real numbers.
we denote also by $a\in \sqrt{-1}\Lie(T)$ the diagonal $n\times n$  Hermitian matrix with coefficients $a(i)$ on the diagonal.
 Using the inner product $ \tr(AB)$ on  matrices,
we may consider $a$ as an element of $(\sqrt{-1}\Lie(G))^*$.
Then  the Weyl chamber $\c_{\geq 0}=\{a=(a(1)\geq a(2)\geq \cdots \geq a(n))\}$ is a set of representatives of the coadjoint orbits of $G$ in $(\sqrt{-1}\Lie(G))^*$.
Denote by $O_a$ the  coadjoint orbit of $a$.
The orbit $O_a$ is provided with
a $G$-invariant K{\"a}hler structure.

Let $\lambda=(\lambda(1)\geq \lambda(2)\geq \cdots \geq \lambda(n))$ be  a dominant weight, that is an element of $\c_{\geq 0}$ with integral coefficients.
Since the representation $V_\lambda$ of $U(n)$  is  obtained by geometric quantization of the corresponding coadjoint orbit $O_\lambda$,   a geometric analogue of the  property  $V_\nu \subset V_\lambda\otimes V_\mu$  for $(\lambda,\mu,\nu)$ a triple of dominant weights
is the property
  $O_c \subset O_a+O_b$ for   $(a,b,c)$  a triple  of $n\times n$  Hermitian matrices.
 Let $\Cone(n)\subset \c_{\geq 0}\oplus \c_{\geq 0}\oplus \c_{\geq 0}$ be the set of such triples $(a,b,c)$.
The cone  $\Cone(n)$ is a polyhedral cone, and A. Horn (\cite{MR0140521}) conjectured a series of recursive inequalities  describing this cone.
Knutson-Tao (\cite{MR1671451})
 proved Horn conjecture together with the famous saturation conjecture: one has $c_{\lambda,\mu}^{\nu}\neq 0$ if and only if
there exists an integer $N\geq 1$ such that
  $c^{N\nu}_{N\lambda,N\mu}\neq 0$.
Equivalently  $c_{\lambda,\mu}^{\nu}\neq 0$ if and only if $(\lambda,\mu,\nu)\in \Cone(n)$.

Consider now the non compact group $G=U(p,q)\subset GL(p+q)$ of pseudo-unitary transformations, and let $n=p+q$.
The group $G$ has a compact Cartan subgroup $T$: the space of $n\times n$  diagonal matrices with entries of modulus one.
There is a series $\pi_\lambda$ of unitary irreducible representations of $G$
called the holomorphic discrete series associated to  a subset of characters of $T$.
Such a   representation  $\pi_\lambda$ is similarly indexed by a sequence of slowly decreasing  integers
$$\lambda=(\lambda(1)\geq\cdots\geq\lambda(p)\geq  \lambda(p+1)\geq\cdots\geq\lambda(p+q)),$$ with a gap between $\lambda(p)$ and $\lambda(p+1)$ (see Section \ref{ds}, Ex.\ref{exUpq}).
It is easy to prove that the tensor product of two representations  $\pi_\lambda$ and $\pi_\mu$ of the holomorphic discrete series decomposes as a sum with finite multiplicities of   representations
$\pi_\nu$ belonging to the holomorphic discrete series:
\begin{equation}\label{tensorproduct}\pi_\lambda \otimes \pi_{\mu}=\oplus_\nu m_{\hol}^{\nu}(\lambda,\mu )\pi_\nu.\end{equation}
Thus a natural question is to describe the triples $(\lambda,\mu,\nu)$ such that
$\pi_{\nu}\subset \pi_{\lambda}\otimes \pi_{\mu}$.

 Let $$A=(A(1)\geq\cdots\geq A(p)\geq A(p+1)\geq\cdots\geq A(p+q))$$ be  a sequence  of slowly decreasing real numbers. We also denote by $A\in \sqrt{-1}\Lie(T)$
the diagonal  matrix with coefficients $A(i)$ on the diagonal. Using again  the bilinear form  $\tr(AB)$, we may consider $A$  as an element of
 $(\sqrt{-1}\Lie(G))^*$.
Let $\CO^{nc}_A$ be the corresponding (non compact) coadjoint orbit of $A$  by the action of $U(p,q)$. Then $\CO^{nc}_A$ is provided with
a $U(p,q)$-invariant K{\"a}hler structure and we say that $\CO^{nc}_A$ is an holomorphic orbit.
The  representation $\pi_\lambda$ of $U(p,q)$ of the holomorphic discrete series  is obtained by geometric quantization of the corresponding (non compact) coadjoint holomorphic orbit $\CO^{nc}_\lambda$.
So the  geometric analogue of   the property $\pi_{\nu}\subset \pi_{\lambda}\otimes \pi_{\mu}$ is  the property  $\CO^{nc}_C \subset \CO^{nc}_A+\CO^{nc}_B$.
 Let $\Cone_{\hol}(p,q)$  be the set of such triples $(A,B,C)$.

In (\cite{Hornpq}), P-E Paradan described inductive necessary and sufficient conditions
for $(A,B,C)$ to belong to $\Cone_{\hol}(p,q)$, similar to the Horn conditions in the definite case.
 Using Derksen-Weyman saturation theorem (\cite{MR1758750}), one deduces
that the saturation conjecture holds for discrete series representations: there exists an integer $N\geq 1$ such that
$m_{\hol}^{N\nu}(N\lambda,N\mu)\neq 0$ if and only if
$m_{\hol}^{\nu}(\lambda,\mu)\neq 0$.
So inequalities  for the cone $\Cone_{\hol}(p,q)$ give inductive necessary and sufficient conditions
for $m_{\hol}^{\nu}(\lambda,\mu)$ to be non zero.

 In this article, we give another proof of Paradan  conditions by relating $m_{\hol}^{\nu}(\lambda,\mu)$ to quivers.
One benefit   is that the inductive conditions appear very naturally in this setting.  Furthermore, some  symmetries are visible which are not seen in the initial problem (see Proposition \ref{symetry}).
We also sketch another proof (based on Rossmann formula for Fourier transform of orbits (\cite{BV82})) that the cone generated by the triples
$(\lambda,\mu,\nu)$ with  $m_{\hol}^{\nu}(\lambda,\mu)>0 $ is equal to the cone $\Cone_{\hol}(p,q)$, as it should be.

Recall  the general quiver setting.
Let $Q=(Q_0,Q_1)$ be a quiver, where~$Q_0$ is the finite set of vertices and~$Q_1$ the finite set of arrows.
A \emph{dimension vector} for~$Q$ is a vector~${\bf {n}} = (n_x)_{x\in Q_0}$ of nonnegative integers
(if vertices are labeled  $\{1,2,\ldots,  q_0\}$,
we  adopt also the notation ${\bf {n}} = [n_{1}, \ldots,  n_{q_0}]$).
We consider the family of complex vector spaces~$\CE=(E_x)_{x\in Q_0}$ with  $E_x=\C^{n_x}.$
The space of \emph{representations} of the quiver~$Q$ on~$\CE$ is given by
\[ \CH_Q(\CE) \coloneqq \bigoplus_{a:x\to y\in Q_1} \Hom(E_x, E_y). \]
The Lie group~$\GL_Q({\bf{n}}) = \prod_{x\in Q_0} \GL(n_x)$  acts naturally on $\CH_Q(\CE)$, and so acts naturally on the space $\Sym^*(\CH_Q(\CE))$ of polynomial functions on~$\CH_Q(\CE)$.
Decompose
\[ \Sym^*(\CH_Q(\CE))=\bigoplus_{\bm{\lambda}} m_Q(\bm{\lambda}) V_{\bm{\lambda}}, \]
where $V_{\vec\lambda}$ denotes the irreducible representation of $\GL_Q({\bf n})$ with highest weight $\bm\lambda$.
If $Q$ has no cycles, the multiplicity  $m_Q(\bm{\lambda})$ is finite.

When $Q$ is the  Horn quiver  $\CH_2$
\begin{equation}\label{eq:quiver}
  1 \rightarrow 3 \leftarrow 2
\end{equation}
and the dimension vector is $\bm n=[n,n,n]$, then $m_Q([\lambda,\mu,\nu])=c_{\lambda, \mu}^{\nu^*}$ where $\nu^*=(-\nu(n)\geq\cdots \geq -\nu(1))$
indexes the dual representation $V_\nu^*$.

One can decide  if $m_Q(\bm{\lambda})>0$ by studying the Schubert positions of general subrepresentations of a representation of $Q$ on $\CE$. One obtains a natural inductive criterium (\cite{bvw}) which coincides for the quiver $\CH_2$
 with Belkale inductive criterium (\cite{MR2177198}) for intersection of Schubert cells.

The action of the compact group $K=\U_Q({\bf{n}}) = \prod_{x\in Q_0} \U(n_x)$ on  $\CH_Q(\CE)$ leads to  a moment map  with
 values in $(\sqrt{-1}\Lie(K))^*$ and to a ``moment cone" $\Cone_Q(\CE)$   parameterizing coadjoint  orbits of $K$  contained in the image of the moment map.
The saturation theorem of Derksen-Weyman is equivalent to the statement that $m_Q(\bm{\lambda})>0$ if and only if $\bm{\lambda}$ belongs to the moment cone  $\Cone_Q(\CE)$.

\bigskip

Consider  the quiver $Q=Q_{3,3}$ with $6$ vertices \{$1$, $2$, $3$, $4$, $5$, $6$\}  and arrows $1\to 3$, $2\to 3$, $4\to 3$, $4\to 5$, $4\to 6,$ see Fig: \ref{fig-vell1}.
Families of objects parameterized by $Q_0$ will be written as lists.

\begin{figure}[h]
\hbox to \hsize\bgroup\hss
\beginpicture
\setcoordinatesystem units <1.2in, 0.9in>

\setplotarea x from -2 to 2, y from -.8 to .8
\multiput {\Large$\bullet$} at -1.22 .72  -1.22 -.72  -.5 0  .5 0  1.22 .72  1.22 -.72 /
\setplotsymbol ({\rm .})
\plot -1.22 .72  -.5 0  .5 0  1.22 .72 /
\plot -1.22 -.72  -.5 0 /
\plot  .5 0  1.22 -.72 /
\put{\Large $1$} at -1.28 .88
\put{\Large $2$} at -1.28 -.88
\put{\Large $3$} at -.45 .13
\put{\Large $4$} at .45 .13
\put{\Large $5$} at 1.28 .88
\put{\Large $6$} at 1.28 -.88
\arrow <12pt> [.3,.7] from -.4 0 to -.47 0
\arrow <12pt> [.3,.7] from 1.148 .648 to 1.2 .705
\arrow <12pt> [.3,.7] from 1.148 -.648 to 1.2 -.705
\arrow <12pt> [.3,.7] from -.572 -.072 to -.53 -0.03
\arrow <12pt> [.3,.7] from -.572 .072 to -.53 0.03
\endpicture
\hss\egroup \caption{\label{fig-vell1}}
\end{figure}

\bigskip

Consider the dimension vector   ${\bm \beta}=[p,p,p,q,q,q]$ and  let  $$\CE=[\C^p,\C^p,\C^p,\C^q,\C^q,\C^q].$$
We denote  the moment cone $\Cone_Q(\CE)$  by $\Cone_Q(p,q)$. It is contained in ${\bm \c}_{\geq 0}=\oplus_{i=1}^6 \c^{(i)}_{\geq 0}$,
where  $\c^{(i)}_{\geq 0}$ is  the positive Weyl chamber for the compact group  $\U(\beta_i)$.

\bigskip

{\bf Example 1; $p=2$, $q=1$}\label{u21intro}.

{\em   Let ${\bm a}=[a_1,a_2,a_3,a_4,a_5,a_6]\in {\bm \c}_{\geq 0}$:
   \begin{align*}
    a_1&=(a_1(1)\geq a_1(2)), \\
    a_2&=(a_2(1)\geq a_2(2)), \\
     a_3&=(a_3(1)\geq a_3(2)),\\
    a_4&=(a_4(1)), \\
 a_5&= (a_5(1)),\\
  a_6&=( a_6(1)).
  \end{align*}

Then ${\bm a}$ belongs to $\Cone_Q(2,1)$ if  and only if
\begin{align*}
  a_1(1)+a_1(2)+a_2(1)+a_2(2)+a_3(1)+a_3(2)+a_4(1)+a_5(1)+a_6(1)&=0,\\
   a_1(2)\geq 0,\,\, a_2(2)\geq 0,\,\, a_5(1)\leq 0,\,\, a_6(1)&\leq 0, \\
   a_1(2)+a_2(2)+a_3(2)+a_4(1)+a_5(1)+a_6(1)&\leq 0,\\
a_1(1)+a_1(2)+a_2(1)+a_2(2)+a_3(1)+a_3(2)&\leq 0,\\
    a_1(1) + a_2(2) +a_3(2) &\leq 0, \\
    a_1(2) +a_2(1) + a_3(2) &\leq 0, \\
    a_1(2) + a_2(2) + a_3(1) &\leq 0.\\
    \end{align*}}


\bigskip

Let  ${\bm \lambda }=[\lambda_1,\lambda_2,\lambda_3,\lambda_4,\lambda_5,\lambda_6]$ be a dominant weight for
$U_Q({\bm \beta})$.
If $m_Q({\bm \lambda })>0$, it follows that
$\lambda_1,\lambda_2,\lambda_3^*$ indexes polynomial representations of $GL(p)$, $\lambda_4^*,\lambda_5,\lambda_6$ dual of polynomial representations of $Gl(q)$. We can consider the concatenated sequences
  $A=(\lambda_1,\lambda_5)$, $B=(\lambda_2,\lambda_6)$, $C=(\lambda_3^*,\lambda_4^*)$.
They are slowly decreasing sequences of $n$ integers.
Then the main remark of this article is that

\begin{proposition}\label{parqui}
$$m_Q({\bm \lambda})=m_{\hol}^C(A,B).$$
\end{proposition}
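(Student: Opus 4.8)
The plan is to compute both sides via the theory of geometric quantization and branching, using the fact that $\pi_\lambda$ (resp. $V_{\bm\lambda}$) is the quantization of the holomorphic orbit $\CO^{nc}_\lambda$ (resp. of $\CH_Q(\CE)$ with its $\GL_Q$-action), and that multiplicities can be read off from $K$-type (or $T$-weight) counting. Concretely, recall that for the holomorphic discrete series of $U(p,q)$ one has the classical fact that $\pi_\lambda$, restricted to the maximal compact $U(p)\times U(q)$, decomposes as $S(\C^p\otimes(\C^q)^*)\otimes F_{\lambda_1,\lambda_5}$ (a polynomial algebra twisted by a finite-dimensional $U(p)\times U(q)$-representation determined by the ``top'' piece of $\lambda$), where $\lambda = (\lambda_1,\lambda_5)$ in the notation $A=(\lambda_1,\lambda_5)$. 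Hence $m_{\hol}^C(A,B)$, the multiplicity of $\pi_C$ in $\pi_A\otimes\pi_B$, can be rewritten purely in terms of $U(p)\times U(q)$ branching data of these polynomial-times-finite-dimensional modules, i.e.\ as a multiplicity inside a space of the form $\Sym^*(\text{several copies of }\C^p\otimes(\C^q)^*)$ tensored with finite-dimensional factors coming from $\lambda_1,\lambda_5,\lambda_2,\lambda_6,\lambda_3^*,\lambda_4^*$.

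The second step is to match this against $m_Q(\bm\lambda)$ for the quiver $Q=Q_{3,3}$. Here I would use the ``$\GL$ of the vertices acts on $\CH_Q(\CE)$'' picture: $m_Q(\bm\lambda)$ is the multiplicity of $V_{\bm\lambda}$ in $\Sym^*(\CH_Q(\CE))$, and $\CH_Q(\CE)=\Hom(E_1,E_3)\oplus\Hom(E_2,E_3)\oplus\Hom(E_4,E_3)\oplus\Hom(E_4,E_5)\oplus\Hom(E_4,E_6)$, i.e.\ a sum of $\Hom$-spaces between the $\C^p$'s and $\C^q$'s dictated by the arrows in Fig.~\ref{fig-vell1}. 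By Howe-type ($GL$--$GL$) duality / Cauchy's formula, $\Sym^*$ of such a $\Hom$-sum decomposes into tensor products of Schur modules, and taking the multiplicity of $V_{\bm\lambda}$ forces Littlewood--Richardson couplings at the shared vertices $3$ and $4$. The key point is that the arrows $1\to 3$, $2\to 3$, $4\to 3$ produce exactly the ``tensor product on the $\C^p$-side'' constraint (three Schur functors in $\C^p$ multiplying to the one labelled by $\lambda_3^*$), while $4\to 5$, $4\to 6$ produce the ``tensor product on the $\C^q$-side'', and the central vertex $4$ (of dimension $q$) is precisely what glues the $\C^p$-data to the $\C^q$-data — reproducing the role of the maximal compact $U(p)\times U(q)\subset U(p,q)$ and the polynomial algebra $\Sym^*(\C^p\otimes(\C^q)^*)$ from the first step.

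Having both sides expressed as the same sum of products of Littlewood--Richardson coefficients (equivalently, the same weight-multiplicity count), the proposition follows by matching the dictionaries $A=(\lambda_1,\lambda_5)$, $B=(\lambda_2,\lambda_6)$, $C=(\lambda_3^*,\lambda_4^*)$ term by term. Alternatively — and this is probably the cleaner write-up — one can argue on the level of cones and saturation: show first that both generating functions are supported on the same set (using the quiver criterion of \cite{bvw} on the $Q$ side and the $K$-type description on the holomorphic-discrete-series side), and then that the two graded multiplicity functions satisfy the same recursion, invoking Derksen--Weyman saturation \cite{MR1758750} to pin down equality from equality of the associated cones plus a polynomiality/partition-function argument. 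I expect the main obstacle to be bookkeeping the duals and the ``slowly decreasing'' condition correctly: one must verify that the gap in $\lambda=(A)$ between the $\C^p$-part $\lambda_1$ and the $\C^q$-part $\lambda_5$ is exactly what is enforced by positivity of the $\Hom(E_4,E_5)\oplus\Hom(E_4,E_6)$ summands (and the sign flips $\lambda_3\mapsto\lambda_3^*$, $\lambda_4\mapsto\lambda_4^*$), so that the holomorphy/positivity chambers on the two sides genuinely coincide rather than merely up to a lattice shift. Once the identification of weights is nailed down, the equality of multiplicities is essentially the Cauchy identity applied to the quiver $Q_{3,3}$.
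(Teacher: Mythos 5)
Your main argument is essentially the paper's proof: the paper establishes $m_{\hol}^\nu(\lambda,\mu)=m_D^\nu(\lambda,\mu)$ via the $K$-type decomposition $\pi_\lambda|_K=\Sym(\p^+)\otimes V_\lambda$ (Theorem \ref{decdiscrete}), and then proves Lemma \ref{mQandmD} by splitting $\Sym^*(\CH_{Q_{3,3}})$ into the outer $\Hom$-summands (handled by the Cauchy formula) and the middle factor $\Sym^*(\Hom(E_{y_3},E_{x_3}))$, which is exactly $\Sym(\p^+)$ up to duality, so the coupling at vertices $3$ and $4$ reproduces $m_D^\nu(\lambda,\mu)$ with the stated dictionary $A=(\lambda_1,\lambda_5)$, $B=(\lambda_2,\lambda_6)$, $C=(\lambda_3^*,\lambda_4^*)$. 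One caution: the ``alternative, cleaner'' route you sketch via Derksen--Weyman saturation could at best compare supports and cones, not the actual multiplicity values, so the Cauchy-formula computation is the argument to retain.
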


Thus we can read on the equations of the cone $\Cone_Q(\CE)$ when $m_{\hol}^C(A,B)>0$, or equivalently when
 $\CO^{nc}_C\subset \CO^{nc}_A+\CO^{nc}_B$.

\bigskip
{{\bf Continuation of Example 1}

{\em  Let
\begin{align*}
    A&=(A(1)\geq A(2) \geq A(3)), \\
    B&=(B(1)\geq B(2) \geq A(3)), \\
    C&=(C(1)\geq C(2)\geq C(3)),\\
    \end{align*}
 be a triple of slowly decreasing sequence of real numbers indexing holomorphic orbits in $U(2,1)$. Using Proposition \ref{parqui},
we reobtain (see \cite{Hornpq})
 $$\CO^{nc}_C\subset \CO^{nc}_A+\CO^{nc}_B$$ if and only if
\begin{align*}
A (1)+A (2) +A(3) +B(1)+B(2)+B(3) =C(1)+C(2)+C(3)&,\\
A(1)+B(1)\geq C(2) &,\\
A(1)+A(2)+B(1)+B(2)\leq C(1)+C(2)&,\\
A(2)+B(2)\leq C(2) &,\\
A(2)+B(1)\leq C(1)&,\\
A(1)+B(2) \leq C(1)&.\\
\end{align*}}

\bigskip

Here is the outline of this article.

In Section  \ref{sec1}, we recall our results (\cite{bvw}) on general subrepresentations of representations of quivers
 and the general inductive algorithm to determine inequalities of moment cones  $\Cone_Q(\CE)$.

 In Section \ref{algo}, we explain in more details the algorithm, and some of its simplifications in the particular situation of the quiver $Q_{3,3}$. We point out the existence of symmetries in the problem.

In Section \ref{ds}, we recall the definition of the holomorphic discrete series for a general Hermitian symmetric space $G/K$.
We recall that the  multiplicity of  a representation $\pi_\nu$ in the tensor product of two  representations  $\pi_\lambda$, $\pi_\mu$
of the holomorphic discrete series
 can be computed as   a multiplicity  for the action of the  maximal compact
subgroup $K$ of $G$  in an explicit  representation.

In Section  \ref{orbit}, we  state the geometric  analogue for the description of  the sum of  two holomorphic orbits.
 As obtained by P.E. Paradan, the set of  orbits contained in the sum of two holomorphic orbits $\CO^{nc}_A$, $\CO^{nc}_B$ can be described as a moment cone for an action of $K$ on a (non compact)
Hamiltonian space. We sketch a more direct proof based on Rossmann formula for Fourier transforms of orbits.

In Section \ref{quiverrep}, we consider $G=U(p,q)$ and relate multiplicities $m_{\hol}^{\nu}(\lambda,\mu)$ to
 $m_Q({\bm \lambda})$, where $Q=Q_{3,3}$ is the particular quiver described above. This is the main observation of this article.

%
%
In  Section  \ref{coneeq},
we compare our inequalities with the ones obtained by P.E. Paradan.

In Section \ref{examples}, we give some more examples of our inequalities.

%
%
%
%
%
%

\bigskip

The content of this article was presented in the conference " Symmetry in geometry and analysis" (Reims, June 2022).
 We  hope that  this contribution to the problem of branching rules  is an appropriate way   of  honoring
  Toshiyuki KOBAYASHI.

As we pointed out, results  of this article are due to P.E. Paradan. But we wished to show that quivers  can be
useful for some special problems in representations of non compact groups.
In particular, at least in low dimensions, it is easy to  guess and visualize the corresponding "Horn inequalities".

  We thank Jean-Louis Clerc, and  David Vogan for their comments  during the conference.
We thank Michel Duflo for many  comments and references.

\section*{ Notations and conventions}
All vector spaces will be fi\-nite-di\-men\-si\-o\-nal complex vector spaces.
Given a vector space~$E$, we write $\dim E$ for its (complex) dimension and we denote by $\Gr(r,E)$ the Grassmannian of subspaces of dimension~$r$ of~$E$, where $0\leq r\leq\dim E$.
We use calligraphic or  bold letters to denote families of objects labeled by the vertex set~$Q_0$ of a quiver.
For example, $\CE=(E_x)_{x\in Q_0}$ will be a family of vector spaces indexed by~$Q_0$, ~${\bf n}=(n_x)_{x\in Q_0}$ will be a family of natural numbers.
If $Q_0$ is labeled  as a set  $\{1,2,\ldots, q_0\}$ of consecutive integers, families of
objects labeled by the vertex set~$Q_0$ will be written as lists.
For example, the dimension vector $[3,1,2,1,1,1]$ for the  quiver $Q_{3,3}$ means ${\bf n}=(n_x)$ with $n_1=3$, $n_2=1$, $n_3=2$, $n_4=1,n_5=1,n_6=1$.

\section{Space of representations of a quiver $Q$}
\label{sec1}

Let $Q=(Q_0,Q_1)$ be a quiver, where~$Q_0$ is the finite set of vertices and~$Q_1$ the finite set of arrows.
We use the notation~$a:x\to y$ for an arrow~$a\in Q_1$ from~$x\in Q_0$ to~$y\in Q_0$.
We allow $Q$ to have  multiple arrows between two vertices but no cycles.
Let ${\bf {n}} = (n_x)_{x\in Q_0}$  be a dimension vector.
We consider a family of complex vector spaces~$\CE=(E_x)_{x\in Q_0}$ of dimension vector $\bf{n}$ that is $\dim E_x=n_x.$
Conversely, a family  $ \CE=(E_x)_{x\in Q_0}$ of vector spaces indexed by $Q_0$ defines the dimension vector $\dim \CE=(\dim E_x)_{x\in Q_0}$.
So  the expression dimension vector will refer indifferently  to a family of nonnegative integers or to a family of complex vector spaces.

The space of \emph{representations} of the quiver~$Q$ on~$\CE$ is given by
\begin{align}\label{eq:quiver repr}
  \CH_Q(\CE) \coloneqq \bigoplus_{a:x\to y\in Q_1} \Hom(E_x, E_y),
\end{align}
whose elements are families~$r=(r_a)_{a\in Q_1}$ of linear maps~$r_a: E_x\to E_y$, one for each arrow $a:x\to y$ in $Q_1$.
The Lie group~$\GL_Q(\CE) = \prod_{x\in Q_0} \GL(E_x)$  acts naturally on $\CH_Q(\CE)$.
For  $g\in\GL_Q(\CE)$,  and $r\in\CH_Q(\CE)$, the action is $ (g \cdot r)_{a:x\to y\in Q_1} = g_y r_a g_x^{-1}$.

Thus the group $\GL_Q(\CE)$ acts naturally on the space $\Sym^*(\CH_Q(\CE))$ of polynomial functions on~$\CH_Q(\CE)$.
  Describing the  irreducible representations of $GL_Q(\CE)$ occurring with non zero multiplicities in $\Sym^*(\CH_Q(\CE))$
  is  related to the study of subrepresentations of a representation $r$. Let us recall some of the relevant results.

Let $\bm{\alpha},\bm{\beta}$ be two dimension vectors.
Consider the Ringel bilinear form  $$
  \langle{\bm\alpha},{\bm\beta}\rangle= \sum_{x\in Q_0} \alpha_x \beta_x - \!\!\!\!\sum_{a:x\to y\in Q_1}\!\!\!\! \alpha_x\beta_y.$$

Let ${\bm {\alpha}}=(\alpha_x)_{x\in Q_0}$ be a dimension vector  such that  $\alpha_x\leq \dim E_x$.
We say that  $\bm{\alpha}$ is a \emph{subdimension vector} for $\CE.$

Define the expected dimension $\edim_Q$ by the formula:
 \begin{equation} \edim_Q(\bm {\alpha},\CE)=\langle \bm {\alpha},\bm{\beta} \rangle \end{equation} \label{edim}
where $\bm{\beta}$ is the dimension vector defined by $\beta_x = \dim E_x - \alpha_x.$

We write $\CS\subseteq\CE$ if $\CS=(S_x)_{x\in Q_0}$ is a family of subspaces~$S_x \subseteq E_x$.
The family~$\CS$ is called a \emph{subrepresentation} of~$r\in\CH_Q(\CE)$ if $r_a S_x\subseteq S_y$ for every arrow $a: x\to y$ in~$Q_1$;
we abbreviate this condition by $r \CS \subseteq \CS$.

Schofield~(\cite{MR1162487}) characterized (inductively) the subdimension vectors~$ {\bm \alpha}$ such that any~$r\in\CH_Q(\CE)$ has a subrepresentation~$\CS$ with $\dim\CS =  {\bm \alpha}$.
We call such a dimension vector a \emph{Schofield subdimension vector} for~$\CE$ and denote this property by $ {\bm \alpha}\leq_Q  {\bm n}$, where $\dim\CE = {\bm n}$.
We also write $ {\bm \alpha} <_Q  {\bm n}$ if in addition at least one of the inequalities $\alpha_x \leq n_x$ is strict.
The condition $ {\bm \alpha} \leq_Q {\bm \beta}$ is transitive.
It is easy to see that a necessary condition for ${\bm \alpha}$ to be a \emph{Schofield subdimension vector} for $\CE$ is that $\edim_Q({\bm \alpha},\CE)\geq 0$.
In \cite{bvw}, we  proved the following natural criterium, which refines Schofield criterium:

\begin{theorem}

Let ${\bm \alpha}$ be a subdimension vector for $\CE$.
Then ${\bm \alpha}\leq_Q \dim \CE$ if and only if

\begin{enumerate}
\item $\edim_Q({\bm \alpha},\CE)\geq 0$.

 \item If ${\bm \beta}<_Q {\bm \alpha}$, then ${\bm \beta}<_Q {\dim \CE }$  (transitivity).

  \end{enumerate}

  \end{theorem}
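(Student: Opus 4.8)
The plan is to reduce both implications to Schofield's criterion \cite{MR1162487} — that ${\bm\alpha}\le_Q\dim\CE$ if and only if $\langle{\bm\alpha'},\dim\CE-{\bm\alpha}\rangle\ge 0$ for every Schofield subdimension vector ${\bm\alpha'}\le_Q{\bm\alpha}$ — together with the two facts recalled just above the theorem: the necessary condition $\edim_Q({\bm\alpha},\CE)\ge 0$ and the transitivity of $\le_Q$. The forward implication is then immediate. If ${\bm\alpha}\le_Q\dim\CE$, then $\edim_Q({\bm\alpha},\CE)=\langle{\bm\alpha},\dim\CE-{\bm\alpha}\rangle\ge 0$ is exactly the recalled necessary condition, which is (1); and if moreover ${\bm\beta}<_Q{\bm\alpha}$, transitivity gives ${\bm\beta}\le_Q\dim\CE$, while ${\bm\beta}\le{\bm\alpha}\le\dim\CE$ componentwise with ${\bm\beta}\ne{\bm\alpha}$ forces ${\bm\beta}\ne\dim\CE$, hence ${\bm\beta}<_Q\dim\CE$, which is (2).

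For the converse I would argue by contradiction. Assume (1) and (2) but ${\bm\alpha}\not\le_Q\dim\CE$. By Schofield's criterion the set $\CB$ of \emph{bad} vectors — those ${\bm\alpha'}$ with ${\bm\alpha'}\le_Q{\bm\alpha}$ and $\langle{\bm\alpha'},\dim\CE-{\bm\alpha}\rangle<0$ — is nonempty, and it is finite since ${\bm\alpha'}\le_Q{\bm\alpha}$ forces ${\bm\alpha'}\le{\bm\alpha}$ componentwise; choose ${\bm\alpha'}\in\CB$ with $\sum_x\alpha'_x$ maximal. Condition (1) says ${\bm\alpha}\notin\CB$, so ${\bm\alpha'}\ne{\bm\alpha}$ and therefore ${\bm\alpha'}<_Q{\bm\alpha}$, whence (2) gives ${\bm\alpha'}\le_Q\dim\CE$. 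Now take a general $r\in\CH_Q(\CE)$: it admits a subrepresentation $\CS'$ of dimension ${\bm\alpha'}$, which by \cite{MR1162487} may be chosen so that $r/\CS'$ is a general representation of dimension $\dim\CE-{\bm\alpha'}$. If we had ${\bm\alpha}-{\bm\alpha'}\le_Q\dim\CE-{\bm\alpha'}$, then $r/\CS'$ would contain a subrepresentation of dimension ${\bm\alpha}-{\bm\alpha'}$, whose preimage in $r$ would be a subrepresentation of dimension ${\bm\alpha}$; since the locus of representations possessing a subrepresentation of a fixed dimension vector is closed, ``general $r$ has one'' would upgrade to ${\bm\alpha}\le_Q\dim\CE$, a contradiction. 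Hence ${\bm\alpha}-{\bm\alpha'}\not\le_Q\dim\CE-{\bm\alpha'}$, and Schofield's criterion for this pair produces ${\bm\eta}\le_Q{\bm\alpha}-{\bm\alpha'}$ with $\langle{\bm\eta},(\dim\CE-{\bm\alpha'})-({\bm\alpha}-{\bm\alpha'})\rangle=\langle{\bm\eta},\dim\CE-{\bm\alpha}\rangle<0$, so in particular ${\bm\eta}\ne 0$. Composing general subrepresentations — a general representation of dimension ${\bm\alpha}$ has a subrepresentation of dimension ${\bm\alpha'}$ with general quotient of dimension ${\bm\alpha}-{\bm\alpha'}$, and that quotient has a subrepresentation of dimension ${\bm\eta}$ — yields ${\bm\alpha'}+{\bm\eta}\le_Q{\bm\alpha}$, so ${\bm\alpha'}+{\bm\eta}\in\CB$ while $\sum_x(\alpha'_x+\eta_x)>\sum_x\alpha'_x$, contradicting maximality. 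Thus $\CB=\emptyset$ and ${\bm\alpha}\le_Q\dim\CE$.

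The formal skeleton above is elementary; the content is in the bookkeeping. Conceptually, condition (1) handles the top term ${\bm\alpha'}={\bm\alpha}$ of Schofield's criterion, while condition (2), inserted into Schofield's criterion for the smaller pair $({\bm\alpha}-{\bm\alpha'},\dim\CE-{\bm\alpha'})$, handles everything strictly below ${\bm\alpha}$, which is exactly why transitivity is the right supplementary hypothesis. I expect the delicate points to be the extremal choice of ${\bm\alpha'}$ — it must be impossible to produce a strictly ``larger'' bad vector ${\bm\alpha'}+{\bm\eta}$, and maximality of $\sum_x\alpha'_x$ (equivalently, minimality of $\langle{\bm\alpha'},\dim\CE-{\bm\alpha}\rangle$) is what guarantees this — and the two genericity inputs from \cite{MR1162487} invoked above: that a general subrepresentation has a general quotient, and that general flags of subrepresentations compose. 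With those in hand the contradiction closes.
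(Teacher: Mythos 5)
Your argument is correct, but it is not the route taken at the source: the present paper states this theorem without proof, citing \cite{bvw}, where it arises as a special case of the finer Schubert-position criterion (Theorem~\ref{thm:main}), established there by a direct geometric induction (dominance of incidence varieties over the representation space and tangent-space/expected-dimension counts in the style of Belkale). You instead deduce the dimension-vector statement from Schofield's recursive numerical criterion by an extremal exchange argument: a bad vector ${\bm\alpha'}$ maximizing $\sum_x\alpha'_x$ is strictly smaller than ${\bm\alpha}$ by condition~(1), hence satisfies ${\bm\alpha'}\le_Q\dim\CE$ by condition~(2); the resulting failure of ${\bm\alpha}-{\bm\alpha'}\le_Q\dim\CE-{\bm\alpha'}$ then manufactures, via Schofield applied to that pair, a strictly larger bad vector ${\bm\alpha'}+{\bm\eta}$, a contradiction. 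All the inputs you invoke (the recursive form of Schofield's criterion, existence of a subrepresentation with general quotient, composition of subrepresentations, closedness of the locus of representations admitting a subrepresentation of fixed dimension) are indeed in \cite{MR1162487}; in fact, since $\le_Q$ is a ``for every $r$'' condition, the genericity of the quotient is not even needed in your two composition steps, which simplifies the bookkeeping further. Your approach buys a short, clean reduction to a classical theorem; the price is that it does not extend to the Schubert-refined Theorem~\ref{thm:main}, which is what \cite{bvw} actually needs and proves. One small slip in your closing remarks: maximality of $\sum_x\alpha'_x$ over the bad set is not equivalent to minimality of $\langle{\bm\alpha'},\dim\CE-{\bm\alpha}\rangle$; fortunately the proof only uses the former.
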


Consider
\begin{align*}
  \Gr_Q(\bm{\alpha},\CE) \coloneqq \prod_{x\in Q_0} \Gr_Q(\alpha_x,E_x)
\end{align*}
where $\Gr(\alpha_x,E_x)$ denotes the Grassmannian of subspaces of~$E_x$ of dimension~$\alpha_x$.
The dimension of $\Gr_Q(\bm{\alpha},\CE)$ is $\sum_{x\in Q_0} \alpha_x \beta_x$, where $\bm{\beta}$
is the dimension vector defined by $\beta_x = \dim E_x - \alpha_x.$
Thus, we have
\begin{equation}\label{edim}
\edim_Q({\bm \alpha},\CE)=\dim(\Gr_Q(\bm{\alpha},\CE))- \!\!\!\!\sum_{a:x\to y\in Q_1}\!\!\!\! \alpha_x\beta_y.
\end{equation}

Given a representation~$r\in\CH_Q(\CE)$ and a dimension vector~$\bm\alpha$, we define the corresponding \emph{quiver Grassmannian} by
\begin{align*}
  \Gr_Q(\bm{\alpha},\CE)_r \coloneqq \{ \CS \in \Gr_Q(\bm{\alpha},\CE) : r\CS \subseteq \CS \}.
\end{align*}

In this language, a Schofield subdimension vector is a subdimension vector~$\bm\alpha$ such that $\Gr_Q(\bm{\alpha},\CE)_r \neq\emptyset$
for every representation~$r\in\CH_Q(\CE)$.
 If $\alpha$ is a Schofield   subdimension vector, then the "expected dimension" $\edim_Q(\bm {\alpha},\CE)$ is the dimension of
 the variety $\Gr_Q(\bm{\alpha},\CE)_r$ for generic $r$.

Choose now a Borel subgroup $B$ of $GL_Q(\CE)$ (or a complete filtration of $\CE$). If $\CS\subseteq \CE$, the Borel subgroup of $\CE$ determines a Borel subgroup of $GL_Q(\CS)$, still denoted by $B$.
By definition, a Schubert variety ${\bm \Omega}$ is  the closure of the $B$ orbit of an element  $\CS$ in $\Gr_Q(\bm{\alpha},\CE)$
(thus ${\bm \Omega}=(\Omega_x)_{x\in Q_0}$ is the product of Schubert varieties  $\Omega_x$ in $\Gr_Q(\alpha_x,E_x )$).
Given a representation $r\in\CH_Q(\CE)$, and ${\bm \Omega}$ a Schubert variety, define
\begin{align*}
  \bm\Omega_r
\coloneqq \Gr_Q(\bm\alpha,\CE)_r \cap \bm\Omega
= \{ \CS \in \bm\Omega : r \CS \subseteq \CS \}.
\end{align*}

\begin{definition}
We say that  $\bm \Omega$ is $Q$-intersecting if $\bm \Omega_r$ is non empty for every  $r\in\CH_Q(\CE)$.
\end{definition}

In other words, $\bm \Omega$ is $Q$-intersecting if, for every $r\in\CH_Q(\CE)$, the Schubert variety~$\bm\Omega$ contains a subrepresentation of~$r$
(it is enough to consider generic representations $r$).
If $\bm\Omega$  is the closure of the $B$-orbit of  $\CS$, we write  $\CS \subseteq_{Q,B} \CE$ if $\Omega$ is $Q$-intersecting .
 We write $\CS \subset_{Q,B} \CE,$ if $\CS\neq \CE$.
Clearly, a necessary condition for $\bm\Omega$ to be $Q$-intersecting is that $\bm\alpha$ is a Schofield subdimension vector.

If $Q$ is the Horn quiver $\CH_2$
\begin{equation}
  1 \rightarrow 3 \leftarrow 2
\end{equation} and the  vector dimension $[n,n,n]$, a triple  $\bm \Omega=[\Omega_1,\Omega_2,\Omega_3]$  of Schubert varieties in the Grassmannian $\Gr(r,n)$ is $Q$-intersecting if and only if the Schubert varieties  $(\Omega_1,\Omega_2,\Omega_3)$ intersect in the homological sense.
So this explains our terminology.

Our inductive criterion for $\bm\Omega$ to be $Q$-intersecting is based on a numerical quantity: the expected dimension $\edim_{Q,B}(\bm\Omega,\CE)$  generalizing (\ref{edim}).
\label{eQB}\begin{align}
  \edim_{Q,B}(\bm\Omega,\CE) \coloneqq \dim\bm\Omega - \!\!\!\!\sum_{a\colon x\to y\in Q_1}\!\!\!\! \alpha_x \beta_y.
\end{align}

(So when ${\bm \Omega}=\Gr_Q(\bm\alpha,\CE)$ is the closure of the big cell,  $\edim_{Q,B}(\bm\Omega,\CE)=\edim_Q(\bm\alpha,\CE)$).

 If $\bm\Omega$  is the closure of the $B$-orbit of  $\CS$, we write   $\edim_{Q,B}(\CS,\CE)$ instead of  $\edim_{Q,B}(\bm\Omega,\CE)$.

If ${\bm \Omega}$ is $Q$-intersecting,  then it is easy to see that the expected dimension  $\edim_{Q,B}(\bm\Omega,\CE)$ is the dimension of the variety ${\bm \Omega}_r$ for generic $r$.
So a necessary condition for $\bm\Omega$ to be $Q$-intersecting in~$\CE$ is that $\edim_{Q,B}(\bm\Omega,\CE)\geq0$
 (but this condition is not sufficient).

Our main result (\cite{quivershort}, \cite{bvw}) is the following inductive criterium:

\begin{theorem}\label{thm:main}
Let $\CE$ be a family of vector spaces,  and $\CS$ a family of subspaces of~$\CE$.
Then $\CS \subseteq_{Q,B} \CE$ if and only if
\begin{enumerate}
\item\label{it:main A} $\edim_{Q,B}(\CS,\CE) \geq 0$,
\item\label{it:main B} $\CT \subset_{Q,B} \CE$ for every $\CT \subset_{Q,B} \CS$(transitivity).
\end{enumerate}
\end{theorem}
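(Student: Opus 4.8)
The plan is to prove Theorem~\ref{thm:main} by induction on $\dim\CE=\sum_x\dim E_x$ (or equivalently on $\dim\bm\Omega$), establishing the two directions separately. The forward direction is the easy one: if $\CS\subseteq_{Q,B}\CE$, then for generic $r$ the variety $\bm\Omega_r$ is nonempty and, as noted in the excerpt, has dimension $\edim_{Q,B}(\bm\Omega,\CE)$, which forces condition~\eqref{it:main A}. For condition~\eqref{it:main B}, given $\CT\subset_{Q,B}\CS$, one picks a generic $r\in\CH_Q(\CE)$; by $Q$-intersection $r$ has a subrepresentation $\CS'$ lying in the Schubert variety $\bm\Omega$ with $\dim\CS'=\dim\CS$, and the restriction $r|_{\CS'}$ is again generic among representations on a space of dimension $\dim\CS$ (this genericity transfer is the point requiring care). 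Since $\CT\subset_{Q,B}\CS$, the induced Schubert data inside $\CS'$ contains a subrepresentation of $r|_{\CS'}$, hence of $r$, sitting in the appropriate sub-Schubert variety of $\CE$; transitivity of the Borel/filtration structure then shows this realizes $\CT\subset_{Q,B}\CE$.

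For the converse, assume \eqref{it:main A} and \eqref{it:main B} and fix a generic representation $r\in\CH_Q(\CE)$; I must produce $\CS'\in\bm\Omega$ with $r\CS'\subseteq\CS'$ and $\dim\CS'=\dim\CS$. The strategy is to analyze the incidence variety $Z=\{(r,\CS')\colon \CS'\in\bm\Omega,\ r\CS'\subseteq\CS'\}$ fibered over $\CH_Q(\CE)$, and to show the projection to $\CH_Q(\CE)$ is dominant. Over the fiber $\bm\Omega$ (projecting the other way), the condition $r\CS'\subseteq\CS'$ cuts out, over the generic point of $\bm\Omega$, a linear subspace of $\CH_Q(\CE)$ of the expected codimension $\sum_{a:x\to y}\alpha_x\beta_y$; combined with $\dim\bm\Omega$ this gives $\dim Z=\dim\CH_Q(\CE)+\edim_{Q,B}(\bm\Omega,\CE)\geq\dim\CH_Q(\CE)$ by \eqref{it:main A}. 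Dominance of $Z\to\CH_Q(\CE)$ then needs to be extracted; the obstruction to concluding purely by dimension count is the possibility that $Z$ maps into a proper subvariety, and this is exactly where hypothesis~\eqref{it:main B} enters.

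The heart of the argument — and the step I expect to be the main obstacle — is using transitivity~\eqref{it:main B} to rule out the degenerate behaviour, i.e. to show that for generic $r$ the ``bad locus'' where $\bm\Omega_r$ is empty (or has excess dimension forcing emptiness of the expected-dimensional component) does not fill up $\CH_Q(\CE)$. The mechanism: if $\bm\Omega_r=\emptyset$ for generic $r$, one degenerates $r$ and looks at a limiting subrepresentation; its dimension vector or Schubert position gives some $\CT\subset_{Q,B}\CS$ (a \emph{proper} sub-datum, since the expected dimension is nonnegative but the variety is empty, so some subobject is forced to degenerate — here Schofield-type counting and the refined criterion from the first quoted Theorem come in), and by \eqref{it:main B}, $\CT\subset_{Q,B}\CE$. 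Running the induction hypothesis on the smaller data $(\CT,\CE)$ and on $(\CS/\CT$-type quotient data$)$ and patching the two subrepresentations — one inside $\CS$, one as a genuine subrepresentation of $r$ realizing $\CT$ — produces the desired $\CS'$, contradicting emptiness. Making ``patching'' precise (choosing compatible complements, checking the Schubert position of the assembled subspace is in $\bm\Omega$, and that the assembled object is $r$-stable) is the delicate bookkeeping; it is essentially a relative/filtered version of Schofield's original inductive step, now carried out at the level of Schubert cells rather than just dimension vectors, and it is where the bulk of the proof in \cite{bvw} resides.
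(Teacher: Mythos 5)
First, note that the paper does not actually prove Theorem \ref{thm:main}: it is quoted from \cite{quivershort} and \cite{bvw}, so there is no in-paper argument to compare yours against. Judged on its own terms, your proposal is a reasonable architectural sketch --- induction on dimension, the incidence variety $Z$, the dimension count $\dim Z = \dim\CH_Q(\CE) + \edim_{Q,B}(\bm\Omega,\CE)$, and the role of transitivity in forcing dominance --- but it is not yet a proof: at each of the two places you yourself flag as ``requiring care'' and ``the main obstacle'', the actual argument is absent rather than merely compressed.

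Concretely: in the forward direction, the assertion that the restriction of a generic $r$ to a generic subrepresentation $\CS'\in\bm\Omega$ is again generic is a genuine Schofield-type lemma, not a formality; and you also need to check that composing the Schubert position of a subrepresentation of $r|_{\CS'}$ inside $\CS'$ (relative to the induced flag) with the position of $\CS'$ inside $\CE$ lands in the Schubert variety of $\CE$ indexed by the composed subsets $\CK\circ\CL$ --- this holds for $\CS'$ in the open cell but must be verified. In the converse direction the gap is more serious. The inequality $\dim Z \geq \dim\CH_Q(\CE)$ says nothing about dominance of $Z\to\CH_Q(\CE)$, as you note; but your proposed repair --- degenerate $r$, extract a limiting $\CT\subset_{Q,B}\CS$, apply condition (2), then ``patch'' --- is not an argument. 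You do not explain why emptiness of $\bm\Omega_r$ for generic $r$ produces a \emph{proper} $\CT$ with the required properties, what the complementary second subrepresentation is, why the assembled subspace is $r$-stable, or why it lies in $\bm\Omega$. These are precisely the points where the proof in \cite{bvw} does its work (via a surjectivity-of-differential criterion for dominance of $Z\to\CH_Q(\CE)$ at a generic point of $Z$, and a cohomological identification of the obstruction with a witness $\CT$), and without them the converse implication is unproved. The proposal should therefore be regarded as a plan, with the two central lemmas still to be supplied.
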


Let us recall the consequences of this theorem for multiplicities.

For~$x\in Q_0$, let~$E_x=\C^{n_x}$, with standard basis~$(e_j)_{1\leq j\leq n_x}$, and consider
the Borel subgroup~$B_x$ that consists of the upper-triangular matrices in~$\GL(E_x)=GL(n_x)$.
Let $U_x$ be the maximally compact subgroup of $GL(n_x)$ consisting of unitary operators, with Lie algebra~$\u_x$.
We  identify  $(\sqrt{-1}\u_x)^*$  with the space of Hermitian operators on~$E_x$ as in the introduction.
Consider the Weyl chamber~$C_x\subset (\sqrt{-1} \u_x)^*$  which may be identified to  the cone of slowly decreasing sequences $a$ of $n_x$
  real numbers  $a=(a_x(1)\geq\cdots\geq a_x(n_x))$.
A $\Z$-valued element of $C_x$ is a dominant weight: $\lambda_x=(\lambda_x(1)\geq\cdots\geq \lambda_x(n_x))$.

The irreducible representations of $\GL_Q(\mathcal E)$ are of the form $V_{\bm\lambda} = \bigotimes_{x\in Q_0} V_{\lambda_x}$,
where $\bm\lambda = (\lambda_x)_{x\in Q_0}$; here $\lambda_x$ is a dominant weight, and   $V_{\lambda_x}$ is the irreducible representation of
$GL(E_x)$ with   highest weight $\lambda_x$.

Write $$\Sym^*(\CH_Q(\CE))=\bigoplus_{\bm{\lambda}} m_Q(\bm{\lambda}) V_{\bm{\lambda}}.$$

Consider the one parameter subgroup $(e^{i\theta} {\rm Id}_{E_x})_x$.  It acts trivially in $\CH_Q(\CE)$.
Thus for  $m_Q(\bm{\lambda})$ to be positive, we need
 $$\sum\limits_{\crampedclap{x\in Q_0}} \sum_{j=1}^{n_x} \lambda_x(j) = 0.$$
 We will refer to this equation as the center equation.

By definition, the cone $\Cone_Q(\CE)$ is the cone (in  the Weyl chamber $\oplus_{x\in Q_0} C_x$)  generated by the dominant weights $\bm\lambda$ with
$m(\bm{\lambda})>0$.
It is easy (as pointed out to us by Ressayre, \cite{ressayrepc}) to deduce from Derksen-Weyman \cite{MR1758750} the saturation property for the coefficients $m_Q(\bm{\lambda})$, namely
 $m_Q(\bm{\lambda})>0 $ if and only if there exists a positive integer $N$ with  $m_Q(N\bm{\lambda})>0$.

The cone $\Cone_Q(\CE)$ has an alternate description  in terms of a moment map in the sense of symplectic geometry.
The moment map for the action of the maximally compact subgroup $U(\CE) = \prod_{x\in Q_0} U_x$ on $\CH_Q(\CE)$   with value in
$\oplus_x (\sqrt{-1}\u_x)^*$ is given by
\begin{align*}
  \mu\colon \CH_Q(\CE) \to \bigoplus_x (\sqrt{-1}\u_x)^*, \quad
  r=(r_a)_{a\in Q_1} \mapsto \mu(r) = (\mu_x(r))_{x\in Q_0},
\end{align*}
where~$\mu_x(r)$ is the Hermitian matrix $\sum_{y,b:y\to x} r_b r_b^* - \sum_{y,a : x\to y} r_a^* r_a$.

\bigskip

If $n$ is an integer, we denote by $[n]$ the list $[1,2,\ldots,n]$.
If $K$ is a subset of $[n]$ with  $r$ elements, we write $K\subseteq [n]$
and also
$K=\{K(1), K(2)\,\ldots,K(r)\}$ the corresponding  sublist of $[n]$ indexed as an increasing list of integers.

 Let $|K|=r$ be the cardinal of the set $K$.
If $J\subseteq [|K|]$ is a subset of $[1,2,\ldots,r]$ with $s$ elements,
 we can compose $K$ and $J$
and obtain the  subset $\{K(J(1)),\ldots, K(J(s))\}$  of $[n]$ with $s$ elements.

Any subset $K \subseteq [n]$  of cardinal $r$ determines the subspace
 $S = \oplus_{i \in K} e_i$  (where $e_i$ denotes the standard basis of~$\C^n$) of $\C^n$  and hence a Schubert variety~$\Omega$ in $\Gr(r,n)$.
 The empty subset $K=\{\}$  indexes the subspace $S=\{0\}$ of $\C^n$.
The  subset $K=\{1\}$ indexes  $\Omega=\{\C e_1\}$, that is the $B$ fixed  point in $\Gr(1,n)$.
The  subset $K=\{n\}$ indexes $\Omega=\Gr(1,n)$, etc....

We now consider families of such sets.
If ${\bm n}=(n_x)_{x\in Q_0}$ is a dimension vector, we denote by $[\bm n]=([n_x])_{x\in Q_0}$ the family  of the lists $[1,\ldots, n_x]$.
Any family $\CK \subseteq [ {\bm n}]$, by which we mean that~$\CK=(K_x)_{x\in Q_0}$
consists of subsets $K_x \subseteq \{1,\dots,n_x\}$, determines the  family~$\CS=(S_x)_{x\in Q_0}$
of subspaces $S_x = \oplus_{i \in K_x} e_i$, and hence a Schubert variety~$\boldsymbol\Omega$.
Any Schubert variety is indexed by a family $\CK$.
Let us write $\CK \subseteq_{Q,B} [ {\bm n}]$ to denote that $\CS \subseteq_{Q,B} \CE$, where $\CS$ is the family of  subspaces determined by $\CK$.
We denote by $\edim_{Q,B}(\CK,\CE)$ the numerical quantity $\edim_{Q,B}(\CS,\CE)$, where $\CS$ is the subspace indexed by $\CK$.

We now recall:

\begin{theorem}\label{thm:moment cone and rep theory summary}
For any dominant  weight $\bm\lambda = (\lambda_x)_{x\in Q_0}$ of $\GL_Q(\CE)$, the following are equivalent:
\begin{enumerate}
\item\label{cond:moment cone} $-\bm\lambda$ is in the image of the moment map,
\item\label{cond:highest weight cone} $\bm\lambda \in \Cone_Q(\CE)$,
\item $V_{\bm\lambda} \subseteq \Sym^*(\CH_Q(\CE))$,
\item\label{cond:horn} $\sum_{{x\in Q_0}} \sum_{j=1}^{n_x} \lambda_x(j) = 0$
and $\sum_{{x\in Q_0}} \sum_{j \in K_x} \lambda_x(j) \leq 0$ for all $\CK \subseteq_{Q,B} [\bf n]$.
\end{enumerate}
\end{theorem}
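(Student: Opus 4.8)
The plan is to establish the cycle of equivalences $(1)\Leftrightarrow(2)\Leftrightarrow(3)\Leftrightarrow(4)$, exploiting the fact that the nontrivial content has already been packaged in the earlier results. First I would treat $(2)\Leftrightarrow(3)$, which is essentially a definition unwound together with the Derksen--Weyman saturation property quoted in the text: by definition $\Cone_Q(\CE)$ is the cone generated by the dominant weights $\bm\lambda$ with $m_Q(\bm\lambda)>0$, and since $\Cone_Q(\CE)$ is a rational polyhedral cone, a \emph{lattice} point $\bm\lambda$ lies in it iff some positive multiple $N\bm\lambda$ satisfies $m_Q(N\bm\lambda)>0$, which by saturation is equivalent to $m_Q(\bm\lambda)>0$, i.e.\ to $V_{\bm\lambda}\subseteq\Sym^*(\CH_Q(\CE))$. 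One must check the cone is indeed rational polyhedral so that the rational-generation argument applies; this follows once $(4)$ is in hand (finitely many inequalities), so logically I would place the polyhedrality observation after proving the inclusion $(2)\subseteq(4)$ or simply cite the finiteness of the Schubert data.

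Next I would handle $(1)\Leftrightarrow(2)$, the symplectic description of the moment cone. For the compact group $U(\CE)=\prod_x U_x$ acting on the K\"ahler (indeed linear symplectic) space $\CH_Q(\CE)$ with moment map $\mu$ written explicitly in the text, the Mumford--Guillemin--Sternberg--Sjamaar correspondence between GIT quotients and symplectic reduction, together with the Borel--Weil realization of $V_{\bm\lambda}$ inside functions on $\CH_Q(\CE)\times(\text{flag varieties})$, gives that $V_{\bm\lambda}$ occurs in $\Sym^*(\CH_Q(\CE))$ iff $-\bm\lambda$ lies in the moment image; the sign comes from the usual convention that polynomial functions of weight $\bm\lambda$ correspond to sections twisted by $-\bm\lambda$, or equivalently from $\mu_x(r)=\sum r_br_b^*-\sum r_a^*r_a$ being the \emph{negative} of the infinitesimal generator paired the other way. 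Concretely: $\bm\lambda\in\Cone_Q(\CE)$ iff $\Sym^*(\CH_Q(\CE))$ contains $V_{N\bm\lambda}$ for some $N$ iff the shifted quotient is nonempty iff $-\bm\lambda\in\mu(\CH_Q(\CE))$, using convexity of the moment image (Sjamaar, or Kirwan in the algebraic setting) to pass from a single multiple back to $\bm\lambda$ itself.

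The heart of the matter is $(2)\Leftrightarrow(4)$, and this is where I expect the real work, though again it reduces to Theorem~\ref{thm:main}. The direction $(2)\Rightarrow(4)$: the center equation is forced because the diagonal torus $(e^{i\theta}\Id_{E_x})_x$ acts trivially on $\CH_Q(\CE)$, as already noted; for each $Q$-intersecting Schubert variety $\bm\Omega$ indexed by $\CK\subseteq_{Q,B}[\bm n]$, one constructs a destabilizing one-parameter subgroup of $\GL_Q(\CE)$ from the filtration defining $\bm\Omega$ and applies the Hilbert--Mumford numerical criterion (equivalently Ressayre's method of well-covering pairs, or a direct argument: if $V_{\bm\lambda}\subseteq\Sym^*(\CH_Q(\CE))$ then pairing the highest weight against the one-parameter subgroup attached to $\CK$ must be $\le 0$ because $\bm\Omega$ contains a subrepresentation of the generic $r$, so the relevant coordinate subspace is $r$-stable and the corresponding weight space of functions is nonzero only when the inequality holds). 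The direction $(4)\Rightarrow(2)$: conversely, if all the Schubert inequalities and the center equation hold, one shows $-\bm\lambda$ is in the moment image by the Ness--Kirwan/Hesselink stratification together with Theorem~\ref{thm:main}, which guarantees that the numerical inequalities coming from $Q$-intersecting Schubert data are not merely necessary but \emph{complete}---this is exactly the inductive completeness statement that Theorem~\ref{thm:main} provides, so the work is to translate ``no destabilizing parabolic'' into ``every would-be violating inequality corresponds to a $\CK$ with $\CK\subseteq_{Q,B}[\bm n]$ and is therefore among those assumed.'' The main obstacle is precisely this last translation: one must know that the one-parameter subgroups relevant to (in)stability are, up to conjugacy and scaling, enumerated by the families $\CK$ of coordinate subspaces, and that $Q$-intersecting is the correct condition on such a $\CK$ for its inequality to be among the facets of the cone. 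This is the content of the Belkale--Ressayre-type analysis for quivers carried out in \cite{bvw}, and I would invoke Theorem~\ref{thm:main} as the black box that closes the loop rather than redoing the GIT stratification here.
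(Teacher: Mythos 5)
Your proposal is correct and follows essentially the same route as the paper, which likewise disposes of the theorem by citing Mumford and Guillemin--Sternberg for $(1)\Leftrightarrow(2)$, Derksen--Weyman saturation for $(2)\Leftrightarrow(3)$, and Ressayre's GIT conditions (made explicit via Theorem~\ref{thm:main} in \cite{quivershort}, \cite{VW}) for the equivalence with $(4)$. Your version merely spells out in more detail the sign conventions, the semigroup/saturation argument, and the Hilbert--Mumford translation that the paper leaves to the references.
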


The equivalence of (1) and (2) is due to Mumford (\cite{NessMumford84}) and Guillemin-Sternberg (\cite{GS1982convex},\cite{GS1982qr}).
 The equivalence of (2) and (3) follows  from the saturation properties of the coefficients  $m_Q(\bm{\lambda})$.
 The equivalence of (3) and (4) follows from Ressayre's conditions for general GIT, the explicit version for this case being discussed
 in \cite{quivershort}, \cite{VW}.
In the inequalities (4), it is sufficient to consider the families $\CK$
such that $\edim_{Q,B}(\CK,\CE)=0$. Even with this restriction, one usually obtains a redundant set of inequalities for the description of the polyhedral cone $\Cone_Q(\CE)$.

\begin{definition}\label{def:Hornset}
We denote by  $\Int(Q,\CE)$  the set  of $\CJ$ such that  $\CJ \subseteq_{Q,B} [\bf n]$ and by
$\Int_0(Q,\CE)$ the subset of those $\CJ$ with $\edim_{Q,B}(\CJ,\CE)=0$.
 We call an element $\CJ$ in $\Int(Q,\CE)$ a $Q$-intersecting set.
 For $\CA$ a subset of $Q_1$, denote by  $\Int(\CA,Q, \CE)$  the subset of $Q$-intersecting sets $\CJ$  such that $|J_x|=|J_y|$ when there is an arrow $a:x\to y\in \CA$,
 and by $\Int_0(\CA,Q, \CE)$   the subset of those $\CJ\in \Int(\CA,Q,\CE)$ with $\edim_{Q,B}(\CJ,\CE)=0$.

 \end{definition}

As a consequence of Theorem \ref{thm:moment cone and rep theory summary}, to describe explicitly the cone $\Cone_Q(\CE)$ by inequalities, we only need to determine the
 "Horn set"  $\Int_0(Q,\CE)$.
 Theorem \ref{thm:main}  translates immediately into  an inductive numerical criterion   that we  explicit in Section \ref{algo}.

\bigskip

Let us make some  remarks on the conditions ${\bm \Omega}\subseteq _{Q,B}\CE$.

\begin{remark}\label{subquiver}

Let ${\bf \Omega}=(\Omega_x)_{x\in Q_0}$  be a family of Schubert varieties.
Let $P\subset Q$ be  a subquiver,  and   ${\bf \Omega}_P=(\Omega_x)_{x\in P},\CE_P=(E_x)_{x\in P}$.
Clearly a necessary condition for  ${\bf \Omega}$ to be $Q$-intersecting in $\CE$ is that  ${\bf \Omega}_P$ is $P$-intersecting in $\CE_P$.
\end{remark}

\begin{remark}\label{minandmax}
Let us give  a few elements in $\Int_0(Q,\CE)$ which are always present.

$\bullet$ The full list $\CK_{\full}$ with $K_x=[n_x]$ for any $x\in Q_0$, with corresponding space $\CS=\CE$.

$\bullet$ Let $v$ be a maximal vertex. Let $K_x=\{\}$ for $x\neq v$, and $K_{v}=\{1\}$.
This indexes the family $\CS$ with  $S_x=\{0\}$, for $x\neq v$, while $S_{v}=\C e_1$ in $E_v$.
Clearly $\CS$ is a subrepresentation of any $r\in \CH_Q(\CE)$.
So  for any maximal vertex $v$,  $\CK_{\max}^{(v)}=(K_{x,x\neq v}=\{\} ,K_{v}=\{1\})$ is always in $\Int_0(Q,\CE)$.

 In particular for $Q=Q_{3,3}$ and dimension vector $\bf n$,
  $$\CK_{\max}^{(5)}=[\{\}, \{\} ,\{\}, \{\}, \{1\}, \{\}]$$
  and  $$\CK_{\max}^{(6)}=[\{\}, \{\} ,\{\}, \{\}, \{\}, \{1\}] $$ are $Q$-intersecting sets.

 $\bullet$ Let us consider now $u$ a minimal vertex.
Let
$K_{u}=\{1,2,\ldots, n_{u}-1\}$ while $K_x=\{1,2,\ldots, n_{x}\}$ if $x\neq u$. This indexes the family
$S_{u}=\C^{n_{u}-1}$ (a stable subspace under $B$), and $S_x=\C^{n_x}$ for $x\neq u$.
So  for any minimal vertex $u$,  $\CK_{min}^{(u)}=(K_{x,x\neq u}=\{1,2,\ldots,n_x\},K_{u}=\{1,2,\ldots, n_{u}-1\})$ is always in $\Int_0(Q,\CE)$.
In particular for $Q_{3,3}$ and dimension vector ${\bf n}=[n_1, n_2 , n_3, n_4, n_5, n_6],$
 $$\CK_{\min}^{(1)}=[\{1,2,\ldots,n_1-1\}, \{1,2,\ldots, n_2\} ,\{1,2,\ldots,n_3\}, \ldots, \{1,2,\ldots,n_6\} ]$$
 and  $$\CK_{\min}^{(2)}=[\{1,2,\ldots,n_1\}, \{1,2,\ldots, n_2-1\} , \{1,2,\ldots,n_3\}, \ldots, \{1,2,\ldots,n_6\}]$$ are $Q$-intersecting sets.
 \end{remark}

\bigskip

We will denote by $E(\CJ)$ the linear form  $\lambda \mapsto \sum\limits_{\crampedclap{x\in Q_0}} \sum_{j \in J_x} \lambda_x(j)$
associated to a family of subsets $J_x\subseteq [1,2,\ldots, n_x]$.
Thus the linear form $E(\CK_{\full})$ is the linear form  $\lambda\mapsto \sum\limits_{\crampedclap{x\in Q_0}} \sum_{j=1}^{n_x} \lambda_x(j)$, that is, is the center equation.

As a consequence of Theorem \ref{thm:moment cone and rep theory summary}, the cone $\Cone_Q(\CE)$ is described by the inequalities $E(\CJ)\leq 0$
when   $\CJ \in \Int_0(Q,\CE)$ and the equality $E(\CK_{\full})=0$.
 Thus for an explicit description of $\Cone_Q(\CE)$, we determine the set $\Int_0(Q,\CE)$, by  ``direct observation" for low dimensions,
 or using the algorithm described in the next section.
 We then use elimination of redundant inequalities to obtain the description of $\Cone_Q(\CE)$ by essential inequalities.

\bigskip

{\bf Example $Q_{3,3}$ with dimension vector ${\bm n}=[p,p,p,q,q,q]$.}

Denote by $\Int_0(p,q)$ the  Horn set  $\Int_0(Q_{3,3},{\bm n})$ (Definition \ref{def:Hornset}).
Let $\CA$ be the subset consisting of the  $4$  arrows $1\to 3$, $2\to 3$  and $4\to 5$, $4\to 6$.
We denote by $\Int_0(\CA, p,q)$ the set $\Int_0(\CA, Q_{3,3},{\bm n})$.
It consists of $Q$-intersecting  subsets $\CK=[K_1,K_2,K_3,K_4,K_5,K_6]$ such that the cardinal of the sets $K_1,K_2,K_3$ coincide, as well as the
cardinal of the sets $K_4,K_5,K_6$. Lemma \ref{equaldim}  reduces the description of $\Int_0(p,q)$  to the description  $\Int_0(\CA, p,q)$, so we call this last set {\bf Non trivial}.

There are two obvious  symmetries by exchanging $1<>2$ and $5<>6,$  as they correspond to exchange vertices $1<>2$ and $5<>6$ in the quiver.
So the sets $\Int_0(p,q)$ are invariant by this group of symmetries.

We also denote by $\Sigma_3\times \Sigma_3$  the group of symmetries
 permuting the vertices $1, 2, 3$  as well as the vertices  $4, 5, 6$.
The set $\Int_0(\CA,p,q)$ is invariant by $\Sigma_3 \times \Sigma_3$, as proved the next section.

\bigskip

{\bf Example  $Q_{3,3}$ with $\bm n=[2,2,2,1,1,1] $}

Here is the list of  sets in $\Int_0(2,1)$.

{\bf Full}
\begin{align*}
[\{1, 2\}, \{1, 2\}, \{1, 2\}, \{1\}, \{1\}, \{1\}] &\\
\end{align*}

{\bf Extremal}
\begin{align*}
\CK_{\max}^{(5)}=&[\{\}, \{\}, \{\}, \{\}, \{1\}, \{\}], &\\
\CK_{\max}^{(6)}=&[\{\}, \{\}, \{\}, \{\}, \{\}, \{1\}], &\\
\CK_{\min}^{(1)}=&[\{1\}, \{1, 2\}, \{1,2\}, \{1\}, \{1\}, \{1\}], &\\
\CK_{\min}^{(2)}=&[\{1,2\}, \{1\}, \{1,2\}, \{1\}, \{1\}, \{1\}] .&\\
\end{align*}

{\bf  Non Trivial}
\begin{align*}
 [\{2\}, \{2\}, \{2\}, \{1\}, \{1\}, \{1\}],&\\
  [\{1, 2\}, \{1, 2\}, \{1, 2\}, \{\}, \{\}, \{\}],&\\
 [\{2\}, \{1\}, \{2\}, \{\}, \{\}, \{\}],&\\
 [\{1\}, \{2\}, \{2\}, \{\}, \{\}, \{\}],&\\
 [\{2\}, \{2\}, \{1\}, \{\}, \{\}, \{\}].&\\
\end{align*}

Remark that the set  {\bf Non Trivial} $=\Int_0(\CA,2,1)$ is invariant by $\Sigma_3\times \Sigma_3$.

\bigskip

Let ${\bf{ a}}=[a_1,a_2,a_3,a_4,a_5,a_6]$  as in Example \ref{u21intro}.
The inequalities $E(\CJ)$ for $\CJ$ belonging to the  set {\bf Extremal} imply the equations
$$  a_5(1)\leq 0,\,\, a_6(1)\leq 0,$$ and the equations $$a_1(2)\geq 0,a_2(2)\geq 0,$$
using the center equation $E(\CK_{\full})=a_1(1)+a_1(2)+a_2(1)+a_2(2)+a_3(1)+a_3(2)+a_4(1)+a_5(1)+a_6(1)=0.$

The  set {\bf Non Trivial} account for the remaining 5 inequalities in Example \ref{u21intro}.

\bigskip

{\bf Example  $Q_{3,3}$ with $\bm n=[2,2,2,2,2,2] $}\label{Q1Q2}

Here is the list of  sets in $\Int_0(2,2)$.

{\bf Full}
\begin{align*}
[\{1, 2\}, \{1, 2\}, \{1, 2\}, \{1,2\}, \{1,2\}, \{1,2\}] &\\
\end{align*}
{\bf Extremal}
\begin{align*}
\CK_{\max}^{(5)}=&[\{\}, \{\}, \{\}, \{\}, \{1\}, \{\}], &\\
\CK_{\max}^{(6)}=&[\{\}, \{\}, \{\}, \{\}, \{\}, \{1\}], &\\
\CK_{\min}^{(1)}=&[\{1\}, \{1, 2\}, \{1,2\}, \{1,2\}, \{1,2\}, \{1,2\}], &\\
\CK_{\min}^{(2)}=&[\{1,2\}, \{1\}, \{1,2\}, \{1,2\}, \{1,2\}, \{1,2\}] .&\\
\end{align*}

{\bf{Non Trivial}}

$$ [\{1\}, \{2\}, \{2\}, \{\}, \{\}, \{\}],  [\{2\}, \{1
\}, \{2\}, \{\}, \{\}, \{\}], [\{2\}, \{2
\}, \{1\}, \{\}, \{\}, \{\}],$$
$$[\{1, 2\}, \{1, 2\}, \{1, 2\}, \{\}, \{\}, \{\}],$$
$$ [\{1, 2\}, \{1, 2\}, \{1, 2\}, \{1\}, \{2\}, \{2\}], [\{1, 2\}, \{1, 2\}, \{1, 2\}, \{2\}, \{1\}, \{2\}], [\{1, 2\}, \{1, 2\}, \{1
, 2\}, \{2\}, \{2\}, \{1\}],$$
$$  [\{1\}, \{2\}, \{2\}, \{2\}, \{2\}, \{2\}], [\{2\}, \{1\}, \{2\}, \{2\}, \{2\}, \{2\}], [\{2\}, \{2\}, \{1\}, \{2\}, \{2\}, \{2\}],$$
$$  [\{2\}, \{2\}, \{2\}, \{1\}, \{2\}, \{2\}], [\{2\}, \{2\}, \{2\}, \{1\}, \{2\}, \{2\}], [\{2\}, \{2\}, \{2\}, \{2\}, \{2\}, \{1\}].$$

Remark that the set {\bf Non Trivial}  $=\Int_0(\CA,2,2)$ is stable by the group of symmetry $\Sigma_3\times \Sigma_3$.
So to describe  it, we may only describe a list of representatives
 of  $\Int_0(\CA,2,2)$  modulo permutations in $\Sigma_3\times \Sigma_3$.
Here is a  set    of such representatives:

\begin{align*}
[\{1\}, \{2\}, \{2\}, \{\}, \{\}, \{\}],&\\
[\{1, 2\}, \{1, 2\}, \{1, 2\}, \{\}, \{\}, \{\}],&\\
[\{1, 2\}, \{1, 2\}, \{1, 2\}, \{1\}, \{2\},\{2\}],&\\
  [\{1\}, \{2\}, \{2\}, \{2\}, \{2\}, \{2\}],&\\
 [\{2\}, \{ 2\}, \{ 2\}, \{1\}, \{2\},\{2\}].&
\end{align*}

Let us see how we can justify the above list with general arguments.
Consider the quiver  $Q_{3,3}$  with dimension vector ${\bf {n}}=[2,2,2,2,2,2].$
 $Q_{3,3}$ contains a subquiver  $P_1$ isomorphic to the  quiver $\CH_2$
 described in (\ref{eq:quiver}) and   a subquiver $P_2$  where we changed orientations of arrows in $\CH_2$.
  Indeed consider the arrow $a : 3 \rightarrow 4$ and the  two subquivers $P_1$  on the left  with points $1,2, 3$ and  $P_2$  on the right with   points $4,5, 6,$  obtained  by considering  suppressing the arrow $a$  in $Q_{3,3},$ see Fig.\ref{fig-vell2}.
\begin{figure}[h]
\hbox to \hsize\bgroup\hss
\beginpicture
\setcoordinatesystem units <1in, 1in>
\setcoordinatesystem units <1in, 0.6in>
\setplotarea x from -2 to 2, y from -.8 to .8
\multiput {\Large$\bullet$} at -1.22 .72  -1.22 -.72  -.5 0  .5 0  1.22 .72  1.22 -.72 /
\setplotsymbol ({\rm .})
\plot -1.22 .72  -.5 0  /
\plot .5 0  1.22 .72 /
\plot -1.22 -.72  -.5 0 /
\plot  .5 0  1.22 -.72 /
\put{\Large $1$} at -1.28 .88
\put{\Large $2$} at -1.28 -.88
\put{\Large $3$} at -.45 .13
\put{\Large $4$} at .45 .13
\put{\Large $5$} at 1.28 .88
\put{\Large $6$} at 1.28 -.88
\arrow <12pt> [.3,.7] from 1.148 .648 to 1.2 .705
\arrow <12pt> [.3,.7] from 1.148 -.648 to 1.2 -.705
\arrow <12pt> [.3,.7] from -.572 -.072 to -.53 -0.03
\arrow <12pt> [.3,.7] from -.572 .072 to -.53 0.03
\endpicture
\hss\egroup \caption{\label{fig-vell2}}
\end{figure}
Thus the  proper $Q_{3,3}$-intersecting sets  not in {\bf Extremal} are necessarily  (see Lemma \ref{equaldim}) of the form
$\CJ=[I_1,I_2,I_3, J_1,J_2,J_3]$
where $(I_1,I_2,I_3)$ parameterize intersecting Schubert cells of $\Gr(r_1,\C^2)$
and $(J_1,J_2,J_3)$  parameterize intersecting Schubert cells of $ \Gr(r_2,\C^2)$ (with $r_1,r_2\in \{1,0\}$) if $|I_1|=[I_2|=|I_3|=r_1$
and  $|J_1|=[J_2|=|J_3|=r_2$ (where the dimension of the intersection may be greater than $0$).
Short arguments on $\edim_{Q,B}$ allows to eliminate other cases that the ones listed above.
Then it is directly easy to see that any element of the list above gives indeed $Q$-intersecting sets.
For example, let us prove that  $[\{2\}, \{2\}, \{2\}, \{1\}, \{2\}, \{2\}]$ is $Q$-intersecting.
Given $r=(r_a)\in \CH_Q(\CE)$ generic (so we may assume  all the $2\times 2$ matrices $r_a$ invertible),  a subrepresentation
$\CS$  of $r$
in position  $[\{2\}, \{2\}, \{2\}, \{1\}, \{2\}, \{2\}]$ exists and is uniquely determined. Indeed, we are forced to take $S_4=\C e_1$.
Since $\{2\}$ is indexing  the full projective space $\Gr(1,\C^2)$,  there are no restrictions  at the other vertices.
So we construct  $\CS=[S_1,S_2,S_3,S_4,S_5,S_6]$ by taking the images or reciproc images of $S_4$ by consecutive  maps $r_a$ connecting a vertex $i$ to $4$.

\section{The algorithm }\label{algo}
A set $\CK\in \Int(Q,\CE)$ produces always a valid inequality for  the cone $\Cone_Q(\CE)$, namely
the inequality  $$E(\CK)= \sum_{x\in Q_0}\sum_{k \in K_x} \lambda_x(k) \leq 0.$$
Thus to determine the cone $\Cone_Q(\CE)$, we have to determine the sets $\CK\subset_{Q,B} [{\bf n}]$.

Let $\CK=(K_x)_{x\in Q_0}$. Let ${\boldsymbol k}=(k_x)_{x\in Q_0}$  be the family of cardinals of the set $K_x$.
It is easy to see that if $K_x(1) < \dots < K_x(k_x)$ are the elements of~$K_x$, then the dimension of the Schubert variety determined by~$\CK$ is
\label{dimomega}\begin{align}
  \dim{\bm \Omega} = \sum_{x\in Q_0} \sum_{j=1}^{k_x} \left( K_x(j) - j \right).
\end{align}

Any family $\CL \subseteq [\boldsymbol k]$  can be composed  with $\CK$
and give rise to a family $\CT = (T_x)_{x\in Q_0}$ of subspaces $$T_x = \oplus_{j=1}^{l_x} e_{K_x(L_x(j))} \subseteq S_x,$$ where $L_x(1) < \dots < L_x(l_x)$ are the elements of $L_x$ and $l_x = \lvert L_x\rvert$.
We have thus
\begin{align}\label{edimQB}
  \edim_{Q,B}(\CT,\CE)
= \sum_{x\in Q_0} \sum_{j=1}^{l_x} \left( K_x(L_x(j)) - j \right)
\;-\!\!\!\! \sum_{a : x\to y \in Q_1} l_x (n_y - l_y)
\end{align}
with  $l_x=|L_x|$ and $l_y=|L_y|$.

Accordingly,   Theorem \ref{thm:main}  translates into the following inductive numerical criterion:

\begin{theorem}(\cite{BVW})\label{criterion}

$\CK \subseteq_{Q,B} [\boldsymbol n]$ if and only if
\begin{equation}\label{eq:refined}
  \sum_{x\in Q_0} \sum_{j=1}^{l_x} \left( K_x(L_x(j)) - j \right) \;\;\geq \!\!\!\!\sum_{a : x\to y \in Q_1} l_x (n_y - l_y)
\end{equation}
for all $\CL \subseteq_{Q,B} [\boldsymbol k ]$.
\end{theorem}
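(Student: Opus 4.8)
The plan is to read this criterion off Theorem~\ref{thm:main} by rewriting both of its conditions through the explicit formulas~(\ref{dimomega}) and~(\ref{edimQB}), the bookkeeping being organized by an induction on the integer $N=\sum_{x\in Q_0}k_x$ that absorbs the recursive character of the second condition of that theorem. So I would argue by induction on $N$: the case $N=0$ (all $K_x$ empty, $\CS=0$) is trivial, so assume $N\ge 1$ and the statement known for smaller values. Fix the coordinate family $\CS=(S_x)_{x\in Q_0}$ with $S_x=\bigoplus_{i\in K_x}e_i$, so that $\CK\subseteq_{Q,B}[\boldsymbol n]$ means exactly $\CS\subseteq_{Q,B}\CE$, and apply Theorem~\ref{thm:main}. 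Its first condition, $\edim_{Q,B}(\CS,\CE)\ge 0$, is by the dimension formula~(\ref{dimomega}) and the definition of $\edim_{Q,B}$ precisely inequality~(\ref{eq:refined}) for the full list $\CL=[\boldsymbol k]$ (for which $L_x(j)=j$ and the composite family is $\CS$ itself); equivalently, it is the $\CL=[\boldsymbol k]$ instance of~(\ref{edimQB}) being $\ge 0$.

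Next I would treat the second condition of Theorem~\ref{thm:main}, that $\CT\subset_{Q,B}\CE$ for every $\CT\subset_{Q,B}\CS$. Since $\subseteq_{Q,B}$ depends only on the Schubert variety (the closure of the $B$-orbit) a family determines, one may replace each such $\CT$ by the coordinate family at the $B$-fixed point of its orbit; with respect to the Borel of $\GL_Q(\CS)$ induced by that of $\GL_Q(\CE)$ these coordinate subrepresentations of $\CS$ are exactly the families $\CT_\CL$ obtained by composing $\CK$ with a family $\CL=(L_x)_{x\in Q_0}$ of subsets of $[\boldsymbol k]$, of cardinality vector $\boldsymbol l=(|L_x|)_{x\in Q_0}$. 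Identifying $(\CS,\text{induced Borel})$ with $(\C^{\boldsymbol k},\text{standard Borel})$ through the basis $(e_{K_x(j)})_{j}$ of $S_x$ gives $\CT_\CL\subset_{Q,B}\CS\iff\CL\subset_{Q,B}[\boldsymbol k]$. Hence Theorem~\ref{thm:main} becomes: $\CK\subseteq_{Q,B}[\boldsymbol n]$ if and only if inequality~(\ref{eq:refined}) holds for $\CL=[\boldsymbol k]$ and $\CT_\CL\subset_{Q,B}\CE$ for every $\CL\subset_{Q,B}[\boldsymbol k]$.

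It then remains to show that ``$\CT_\CL\subset_{Q,B}\CE$ for all $\CL\subset_{Q,B}[\boldsymbol k]$'' is equivalent to ``inequality~(\ref{eq:refined}) holds for all $\CL\subset_{Q,B}[\boldsymbol k]$''. For the forward implication one uses only the necessary condition, recorded just before Theorem~\ref{thm:main}, that a $Q$-intersecting Schubert variety has nonnegative expected dimension, together with the identity read from~(\ref{edimQB}) that $\edim_{Q,B}(\CT_\CL,\CE)\ge 0$ is exactly inequality~(\ref{eq:refined}) for $\CL$. For the converse I would use the induction: fix a proper $\CL\subset_{Q,B}[\boldsymbol k]$; since $\sum_x l_x<N$, the statement being proved is already available for the coordinate family $\CT_\CL\subset\CE$, and it reduces $\CT_\CL\subset_{Q,B}\CE$ to the inequalities $\edim_{Q,B}(\CT_{\CL\circ\CL'},\CE)\ge 0$ for all $\CL'\subseteq_{Q,B}[\boldsymbol l]$, where $\CT_{\CL\circ\CL'}$ is the coordinate family of $\CE$ obtained by composing $\CK$ with the composite family $\CL\circ\CL'$ (composition of families of subsets being associative). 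Each $\CL\circ\CL'$ is again a proper family with $\CL\circ\CL'\subseteq_{Q,B}[\boldsymbol k]$, by the transitivity of $\subseteq_{Q,B}$ — an immediate consequence of the second condition of Theorem~\ref{thm:main} applied to $\CL\subseteq_{Q,B}[\boldsymbol k]$ — so the hypothesis supplies inequality~(\ref{eq:refined}) for $\CL\circ\CL'$, that is $\edim_{Q,B}(\CT_{\CL\circ\CL'},\CE)\ge 0$. Therefore $\CT_\CL\subset_{Q,B}\CE$, which closes the equivalence and the induction.

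The step I expect to be the real obstacle is not conceptual but organizational: one must carefully justify that the quantifier over $\CT\subset_{Q,B}\CS$ in Theorem~\ref{thm:main} can be restricted to coordinate families and is compatible both with the induced Borel on $\CS$ and with regarding $\CT_\CL$ as a subspace of $\CE$, and one must keep track of the associativity of the composition of families of subsets so that the inequalities~(\ref{eq:refined}) for $\CT_\CL$ inside $\CE$ line up with the inequalities~(\ref{eq:refined}) for the composite index sets inside $[\boldsymbol k]$. Once those identifications are in place, what is left is the routine comparison of the formulas~(\ref{dimomega}) and~(\ref{edimQB}) with the two sides of~(\ref{eq:refined}).
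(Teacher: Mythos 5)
Your proposal is correct and is essentially the argument the paper intends: the paper gives no proof here beyond asserting that Theorem~\ref{thm:main} ``translates immediately'' into the numerical criterion (citing \cite{BVW}), and your write-up simply makes that translation explicit — identifying condition (1) with the $\CL=[\boldsymbol k]$ instance of (\ref{eq:refined}), reducing the quantifier in condition (2) to coordinate families via the induced Borel, and absorbing the recursion by induction on $\sum_x k_x$ together with transitivity of $\subseteq_{Q,B}$ and associativity of composition. The points you flag as needing care (compatibility of Schubert positions under the induced Borel, composition of index families) are exactly the bookkeeping carried out in \cite{BVW}; no step is missing.
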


The criterion in (\ref{eq:refined}) is easy to test numerically.
One may further restrict the families $\CL$ that need to be considered  as we will see in a moment.

In  (\cite{BVW}), we proved that it is sufficient to test on the
$\CL \subseteq_{Q,B} [\boldsymbol k]$,  such that $\edim_{Q,B}(\CL,\boldsymbol k) =0.$

Given a subset $\CA$ of the set of arrows, we defined  $\Int(\CA,Q, \CE)$ to be the set of those  $\CK\in \Int(Q, \CE)$ such that
$|K_x|=|K_y|$ for every arrow $x\to y$ in the subset $\CA\subseteq Q_1$.
Then we proved \cite{BVW} that the set  $\Int(\CA,Q, \CE)$ is determined inductively by testing
on the $\CL \subseteq_{Q,B} [\boldsymbol k]$ satisfying the same dimension condition
 (i.e. $|L_x| = |L_y|$ for every arrow $x\to y\in \CA$) and with $\edim_{Q,B}(\CL,\boldsymbol k) =0.$

\bigskip

We now specialize our result to the quiver~$Q_{3,3}$,  dimension vector $[p,p,p,q,q,q]$ and vertices
$[1,2,3,4,5,6]$.

Consider the set $\CA=\{1\to 3,2\to 3, 4\to 5,4\to 6\}$ of arrows in $Q_{3,3}$.
We define $$\Int(p,q)=\Int(Q_{3,3},[p,p,p,q,q,q]),\,\,\Int_0(p,q)=\Int_0(Q_{3,3},[p,p,p,q,q,q]),$$ $$\Int(\CA,p,q)=\Int(\CA,Q_{3,3},[p,p,p,q,q,q]),\,\,\Int_0(\CA,p,q)=\Int_0(\CA,Q_{3,3},[p,p,p,q,q,q]).$$

Let ${\bm \lambda}=[\lambda_{1},\lambda_{2},\lambda_{3},\lambda_{4},\lambda_{5},\lambda_{6}]$ in the Weyl chamber for  $GL(p)^3\times GL(q)^3$.
Consider the following set  of four inequalities
$${\bf Extremal}=\{\lambda_{1}(p)\geq 0,  \lambda_{2}(p)\geq 0, \lambda_{5}(1)\leq 0,  \lambda_{6}(1)\leq 0\}.$$
If ${\bm \lambda}$ is a weight,
the equations in ${\bf Extremal}$ just say that $\lambda_1,\lambda_2$ index polynomial representations of $GL(p)$, while $ \lambda_5, \lambda_6$   index dual of polynomial representations of $GL(q)$.
\begin{lemma}\label{equaldim}
Let $Q=Q_{3,3}$,  ${\bf n}=[p,p,p,q,q,q]$ and $\CE=[\C^p,\C^p,\C^p,\C^q,\C^q,\C^q]$.
Consider the set $\CA=\{1\to 3,2\to 3, 4\to 5,4\to 6\}$.
Then the cone $\Cone_Q(\CE)$ is the cone defined by the center equation $E(\CK_{\rm full})$,
the inequalities $E(\CK)$ with $\CK\in \Int(\CA,p,q)$
together with the four inequalities in ${\bf Extremal}$.
\end{lemma}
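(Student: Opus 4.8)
The plan is to prove that $\Cone_Q(\CE)$ equals the cone $\CC'$ cut out, inside the ambient Weyl chamber $\bigoplus_{x} C_{x}$, by the equation $E(\CK_{\full})=0$, the inequalities $E(\CK)\le 0$ for $\CK\in\Int(\CA,p,q)$, and the four inequalities of $\textbf{Extremal}$. By Theorem~\ref{thm:moment cone and rep theory summary}, $\Cone_Q(\CE)$ is itself cut out by $E(\CK_{\full})=0$ together with the inequalities $E(\CJ)\le 0$, $\CJ\in\Int(Q,\CE)$. One inclusion, $\Cone_Q(\CE)\subseteq\CC'$, is immediate: every $\CK\in\Int(\CA,p,q)$ is $Q$-intersecting, so $E(\CK)\le 0$ is among the defining inequalities of $\Cone_Q(\CE)$, and the four inequalities of $\textbf{Extremal}$ are the ones attached to the $Q$-intersecting sets $\CK_{\max}^{(5)}$, $\CK_{\max}^{(6)}$, $\CK_{\min}^{(1)}$, $\CK_{\min}^{(2)}$ of Remark~\ref{minandmax} — for the two $\CK_{\min}$ one rewrites $E(\CK_{\min}^{(i)})\le 0$ as $\lambda_i(p)\ge 0$ with the help of $E(\CK_{\full})=0$. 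So what remains is to show that $E(\CJ)\le 0$ holds on $\CC'$ for every $\CJ\in\Int(Q,\CE)$.

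Fix such a $\CJ=[J_1,\dots,J_6]$ and put $r_x=|J_x|$. Applying Remark~\ref{subquiver} to the four one-arrow subquivers given by the arrows of $\CA$ — each joining two vertices carrying equal-dimensional spaces, so that a generic representation is an isomorphism — forces $r_1,r_2\le r_3$ and $r_4\le r_5$, $r_4\le r_6$. If all four of these are equalities then $\CJ\in\Int(\CA,p,q)$ and there is nothing to prove; otherwise I will reduce to that situation by two elementary operations on Schubert varieties. Suppose $r_1<r_3$; then replace the Schubert variety $\Omega_{J_1}$ at vertex $1$ by the Schubert variety $\Omega_{J_1}^{\uparrow}$ formed of all $(r_1+1)$-dimensional subspaces of $\C^p$ containing an element of $\Omega_{J_1}$ (its index set is $J_1\cup\{m\}$ with $m=\max([p]\setminus J_1)$). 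The resulting family is still $Q$-intersecting: given generic $r$ and a subrepresentation $\CS$ of $r$ with $S_1\in\Omega_{J_1}$, the subspace $r_{1\to 3}^{-1}(S_3)$ has dimension $r_3>r_1$ and contains $S_1$, so one may choose an $(r_1+1)$-dimensional $S_1^{+}$ with $S_1\subseteq S_1^{+}\subseteq r_{1\to 3}^{-1}(S_3)$; then $S_1^{+}\in\Omega_{J_1}^{\uparrow}$ and $\CS$ with $S_1$ replaced by $S_1^{+}$ is again a subrepresentation. Dually, if $r_4<r_5$, replace $\Omega_{J_5}$ at vertex $5$ by the Schubert variety $\Omega_{J_5}^{\downarrow}$ of all $(r_5-1)$-dimensional subspaces contained in an element of $\Omega_{J_5}$ (index set $J_5\setminus\{\min J_5\}$); the analogous genericity argument, using $\dim r_{4\to 5}(S_4)\le r_4<r_5$, again preserves the $Q$-intersecting property. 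The cases $r_2<r_3$ and $r_4<r_6$ are handled the same way at vertices $2$ and $6$.

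To finish, I would iterate: the operation at vertex $1$ (resp.\ $2$) only enlarges $J_1$ (resp.\ $J_2$) and can be repeated until $r_1=r_3$ (resp.\ $r_2=r_3$), and the operation at vertex $5$ (resp.\ $6$) only shrinks $J_5$ (resp.\ $J_6$) and can be repeated until $r_5=r_4$ (resp.\ $r_6=r_4$); the outcome is a $Q$-intersecting family $\CJ'$ with $r_1'=r_2'=r_3'$ and $r_4'=r_5'=r_6'$, that is, $\CJ'\in\Int(\CA,p,q)$. On $\CC'$ one has $\lambda_1,\lambda_2\ge 0$ and $\lambda_5,\lambda_6\le 0$ entrywise (by dominance together with $\textbf{Extremal}$), so enlarging $J_1$ or $J_2$ and shrinking $J_5$ or $J_6$ never decreases the functional $E$; hence $E(\CJ)\le E(\CJ')$ on $\CC'$. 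Since $E(\CJ')\le 0$ is one of the defining inequalities of $\CC'$, we conclude $E(\CJ)\le 0$ on $\CC'$, which is exactly what was needed. I expect the main obstacle to be the verification of the two Schubert operations: one must check that each preserves the $Q$-intersecting property (this, as above, reduces to a genericity statement involving only the single arrow at the modified vertex) and pin down the index set precisely — adjoining the largest absent index, respectively deleting the smallest present one — so that the change in $J_x$ is monotone in the way needed to control $E$ on $\CC'$.
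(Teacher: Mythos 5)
Your proof is correct, and it takes a genuinely different route from the paper. The paper argues via symplectic geometry: using the moment map description of $\Cone_Q(\CE)$ and the structure of walls of the Kirwan polyhedron, it shows that any point of the face $\{E(\CJ)=0\}\cap\Cone_Q(\CE)$ is of the form $-\mu(r)$ with $r$ annihilated by $X_\CJ$; if $|J_x|\neq|J_y|$ for some arrow in $\CA$, the corresponding component $r_a$ is forced to be singular, so $\lambda_x(p)=0$ (or the analogous extremal equality) holds on that face, which is therefore contained in an \textbf{Extremal} face and the inequality $E(\CJ)\leq 0$ contributes nothing new. You instead prove the sharper pointwise statement that, on the candidate cone, $E(\CJ)\leq E(\CJ')$ for an explicit $\CJ'\in\Int(\CA,p,q)$ obtained by raising/lowering Schubert varieties at the four extremal vertices; this is purely combinatorial (no moment map, no Kirwan theory) and exploits exactly the structural feature that vertices $1,2$ are sources and $5,6$ are sinks each carrying a single arrow to/from an equal-dimensional space, so the raising/lowering step only has to respect one incidence condition. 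The two verifications you flag as the main obstacles — that each operation preserves the $Q$-intersecting property for generic $r$ (which suffices, since nonemptiness of $\bm\Omega_r$ is a closed condition on $r$), and that the index set changes by adjoining $\max([p]\setminus J_1)$, resp.\ deleting $\min J_5$ — both check out; in fact for your purposes only the containments $\Omega_{J_1}^{\uparrow}\subseteq\Omega_{J_1\cup\{m\}}$ and $\Omega_{J_5}^{\downarrow}\subseteq\Omega_{J_5\setminus\{\min J_5\}}$ are needed, since enlarging a $Q$-intersecting family of Schubert varieties preserves the $Q$-intersecting property. What the paper's argument buys is brevity given the GIT machinery already in place; what yours buys is an elementary, self-contained redundancy proof that also makes the monotonicity of $E$ under the reduction explicit.
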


\begin{proof}
Let $U_Q(\CE)=U(p)^3\times U(q)^3$.
Let $\mu: \CH_Q(\CE)\to (\sqrt{-1}\Lie(U_Q(\CE)))^*$ be the moment map.
Let ${\bm \t}$  be the Cartan subalgebra of $U_Q(\CE)$. Then ${\bm \t}=\oplus_{j=1}^6 \t_j$, where $\t_j$ is of dimension $p$ for $j\in \{1,2,3\}$
and of dimension $q$ for $j\in \{4,5,6\}$.
Let $h_k$  be the diagonal $p\times p$ (or $q\times q$) matrix with $1$ at the $k$-th place, $0$ at other places.

Let  $\CJ=[J_1,J_2,\ldots, J_6]$ be  a collection of subsets of integers, with  $J_1,J_2,J_3 \subseteq [p]$ and $J_4,J_5,J_6 \subseteq [q]$.
Let $v$ be a vertex of $Q_{3,3}$. We define $X_{\CJ}$ be the element of $\sqrt{-1}{\bm \t}$
with its component $X_v$ in $\sqrt{-1}\t_v$ given by
 $\sum_{j\in J_v} h_j $
 (if $J_v$ is empty, $X_v=0$).

 For example if $p=2,q=2$ and $\CJ= [\{2\}, \{ 2\}, \{ 2\}, \{1\}, \{2\},\{2\}]$, then
 $$X_{\CJ}=
 \left[\left(
   \begin{array}{cc}
     0& 0 \\
     0 & 1 \\
   \end{array}
 \right),
  \left(
   \begin{array}{cc}
     0 & 0 \\
     0 & 1 \\
   \end{array}
 \right),
  \left(\begin{array}{cc}
     0 & 0 \\
     0 & 1 \\
   \end{array}
 \right),
 \left(
   \begin{array}{cc}
     1 & 0 \\
     0 & 0 \\
   \end{array}
 \right),
 \left(
   \begin{array}{cc}
     0 & 0 \\
     0 & 1 \\
   \end{array}
 \right),
 \left(
   \begin{array}{cc}
     0 & 0 \\
     0 & 1 \\
   \end{array}
 \right)\right].$$

Let ${\bm \lambda}\in (\sqrt{-1}\bm \t)^*$.
Then $E(\CJ)({\bm\lambda})=\langle X_{\CJ},{\bm \lambda}\rangle$.

For ${\bm X}=(X_v)$ in $\sqrt{-1}{\bm \t}$, we consider the subspace  $\CH({\bm X})$ of $\CH_Q(\CE)$  given by
$$\CH({\bm X})=\{r\in \CH_Q(\CE); r_a X_v=X_w r_a\}$$ for $a: v\to w$ in $Q_1$, that is  $\CH({\bm X})$  is the subspace of $\CH_Q(\CE)$ annihilated by the infinitesimal action of ${\bm X}$ on $\CH_Q(\CE)$.
Let $\CJ$ be $Q$-intersecting, then $\{E(\CJ)=0\}\cap \Cone_Q(\CE)$ is a face of $\Cone_Q(\CE)$ since $E(\CJ)\leq 0$ is a valid inequation.
Any point ${\bm \lambda}\in \{E(\CJ)=0\}\cap \Cone_Q(\CE)$  is of the form
$-\mu(r)$ with $r\in \CH(X_\CJ))$. This follows from  properties of walls of a Kirwan polyhedron. See for example (\cite{VW}).

Assume that $\CJ$ is not in $\Int(\CA,Q,\CE)$.
For example $|J_1|\neq |J_3|$, for the arrow $a: 1\to 3$.
Let ${\bm \lambda}=-\mu(r)$ with $r\in \CH(X_\CJ)$. Then the component $r_a$ of $r$ is not invertible, since otherwise the equation
$r_a X_{J_1}=X_{J_3} r_a$ would imply that  $X_{J_1}$ and $X_{J_3}$ are conjugated, and so $|J_1|=|J_3|$.
 But then the component $\lambda_1=-\mu(r)[1]$ is the  non negative Hermitian matrix $r_a^* r_a$  with determinant $0$. It follows that  $\lambda_{1}(p)=0$,  (since $\lambda_{1}(p)\geq 0$),  and the face $\{E(\CJ)=0\}\cap \Cone_Q(\CE)$  is contained in the face  $\{\lambda_{1}(p)=0\}\cap \Cone_Q(\CE)$ of $\Cone_Q(\CE)$.
 The proof is similar in all other cases.
 \end{proof}

Denote also by Extremal the subset

 $${\bf Extremal}=\{\CK_{\max}^{(6)},
\CK_{\max}^{(5)},
\CK_{\min}^{(1)},
\CK_{\min}^{(2)}\}$$  of $\Horn_0(p,q)$  giving rise to the $4$ extremal inequalities listed above.

Let  $\Sigma_3\times \Sigma_3$ be  the group of symmetries
 permuting the vertices $1, 2, 3$  as well as the vertices  $4, 5, 6$.

\begin{proposition}\label{symetry}
We have
$$\Horn(p,q)=\CK_{\full}\cup {\bf Extremal}\cup \Int(\CA,p,q).$$

Furthermore,  $\Int(\CA,p,q)$
is invariant by $\Sigma_3\times \Sigma_3$.
\end{proposition}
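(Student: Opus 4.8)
The plan is to prove the two assertions in turn, the $\Sigma_3\times\Sigma_3$-invariance being the substantial one.

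\emph{The set identity.} The inclusion $\supseteq$ is immediate: $\CK_{\full}$ corresponds to the subrepresentation $\CS=\CE$, which every $r$ stabilises; the four members of ${\bf Extremal}$ are $Q$-intersecting by Remark~\ref{minandmax}; and $\Int(\CA,p,q)\subseteq\Int(p,q)=\Horn(p,q)$ by definition. For the reverse inclusion one shows that a $Q$-intersecting family $\CK$ lying neither in $\{\CK_{\full}\}$ nor in ${\bf Extremal}$ must be $\CA$-balanced. Indeed, for an arrow $a\colon x\to y$ of $\CA$ a generic $r_a$ is an invertible square matrix ($p\times p$ for $1\to3,\,2\to3$ and $q\times q$ for $4\to5,\,4\to6$), so $r_aS_x\subseteq S_y$ with $\dim S_x=|K_x|$, $\dim S_y=|K_y|$ forces $|K_1|,|K_2|\le|K_3|$ and $|K_4|\le|K_5|,|K_6|$; and if one of these inequalities is strict, the computation in the proof of Lemma~\ref{equaldim} shows that the face $\{E(\CK)=0\}\cap\Cone_Q(\CE)$ is contained in one of the four faces cut out by ${\bf Extremal}$, so $\CK$ is the corresponding extremal family, a contradiction. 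Combined with Lemma~\ref{equaldim} this yields the stated description of $\Horn(p,q)$ (read as the set of Horn families cutting out $\Cone_Q(\CE)$).

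\emph{Invariance under $\Sigma_3\times\Sigma_3$.} The transpositions $(1\,2)$ and $(5\,6)$ are honest automorphisms of $Q_{3,3}$, hence obviously preserve $\Int(\CA,p,q)$; what needs proof is invariance under the remaining generators, e.g.\ $(2\,3)$ or $(4\,5)$, which alter the quiver. I would argue by induction on $p+q$ (equivalently on $\sum_x n_x$), using the refined inductive criterion of Theorem~\ref{criterion} for $\Int(\CA,\cdot)$: for an $\CA$-balanced family $\CK$ with cardinal vector $\bm k=(|K_x|)_x$, one has $\CK\in\Int(\CA,p,q)$ iff for every $\CA$-balanced $\CL\in\Int_0(\CA,Q,[\bm k])$ the inequality $\sum_{x\in Q_0}\sum_{j=1}^{l_x}\bigl(K_x(L_x(j))-j\bigr)\ \ge\ \sum_{a\colon x\to y\in Q_1}l_x(n_y-l_y)$ holds, with $l_x=|L_x|$. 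The crucial observation is that, restricted to $\CA$-balanced families, both sides of this inequality are unchanged when a fixed $\sigma\in\Sigma_3\times\Sigma_3$ is applied simultaneously to $\CK$ and to $\CL$: the left side is a sum over $x\in Q_0$, hence invariant under the relabelling $x\mapsto\sigma^{-1}x$; and the right side, for an $\CA$-balanced $\CL$ with block cardinalities $s_1=|L_1|$ and $s_2=|L_4|$, equals $2s_1(p-s_1)+s_2(p-s_1)+2s_2(q-s_2)$, depending only on $(s_1,s_2,p,q)$. Note that the arrow $4\to3\notin\CA$ contributes exactly the ``coupling'' term $s_2(p-s_1)$, which is itself symmetric; this is why the presence of that arrow does not destroy the symmetry.

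Since $\bm n=[p,p,p,q,q,q]$ and every $\CA$-balanced $\bm k$ are fixed by $\Sigma_3\times\Sigma_3$, the family $\sigma\cdot\CK$ is again $\CA$-balanced with the same cardinal vector $\bm k$; and since $\edim_{Q,B}(\cdot,\bm k)$ is, likewise, a sum over $x$ minus the same symmetric quantity, $\Int_0(\CA,Q,[\bm k])$ is $\sigma$-invariant as soon as $\Int(\CA,Q,[\bm k])$ is. By the inductive hypothesis the latter holds whenever $\sum_x k_x<\sum_x n_x$, while the case $\bm k=\bm n$ is just $\CK=\CK_{\full}$, visibly $\sigma$-invariant. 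Applying the criterion to $\sigma\cdot\CK$, reindexing the test families $\CL$ by $\sigma$ (a bijection of $\Int_0(\CA,Q,[\bm k])$), and using the invariance of both sides, one reads off $\sigma\cdot\CK\in\Int(\CA,p,q)\iff\CK\in\Int(\CA,p,q)$, completing the induction. The main obstacle is exactly this concluding step: one must be sure that the refined test — Theorem~\ref{criterion} together with the reduction to $\CA$-balanced $\CL$ with $\edim_{Q,B}=0$ — is legitimately available, and that the induction is well-founded, which it is, since every test family $\CL$ satisfies $\sum_x|L_x|\le\sum_x|K_x|$ with equality only for $\CL$ the full family, a case handled directly by $\edim_{Q,B}(\CK,\CE)\ge0$. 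A geometric reformulation runs in parallel but is equally delicate: for $\CA$-balanced $\CK=[I_1,I_2,I_3,J_1,J_2,J_3]$, being $Q$-intersecting means that for generic $r$ there exist $S_3$ satisfying three Schubert conditions read off from $I_1,I_2,I_3$ in $\Gr(|I_1|,p)$ and $S_4$ satisfying three Schubert conditions from $J_1,J_2,J_3$ in $\Gr(|J_1|,q)$ with $r_{4\to3}\,S_4\subseteq S_3$; the triple Schubert conditions are $\Sigma_3$-symmetric by Kleiman transversality, and the coupling through $r_{4\to3}$ depends on the data only via $|I_1|,|J_1|,p,q$.
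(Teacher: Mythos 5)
Your proof is correct and follows essentially the same route as the paper: the set identity is delegated to Lemma~\ref{equaldim} (as the paper implicitly does, reading $\Horn(p,q)$ modulo redundant inequalities), and the invariance is proved by the same induction via Theorem~\ref{criterion} restricted to $\CA$-balanced test families, using that both sides of the criterion — in particular the arrow sum $2s_1(p-s_1)+s_2(p-s_1)+2s_2(q-s_2)$ — are unchanged under simultaneous application of $\sigma\in\Sigma_3\times\Sigma_3$ to $\CK$ and $\CL$. The only loose phrase is ``so $\CK$ is the corresponding extremal family, a contradiction'': a non-$\CA$-balanced $Q$-intersecting family need not equal one of the four extremal families (e.g.\ $[\{\},\{\},\{1\},\{\},\{\},\{\}]$); the correct conclusion, which your surrounding text already supplies, is that its inequality is redundant given the extremal ones.
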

\begin{proof}
Let $\CK=[K_1,K_2,K_3,K_4,K_5,K_6]$.
Clearly the numerical quantity $\edim_{Q,B}(\CK,\CE)$ is invariant by $\Sigma_3\times \Sigma_3$ when $|K_1|=|K_2|=|K_3|$ and
$|K_4|=|K_5|=|K_6|$.
Let $\CK$ be such that $|K_1|=|K_2|=|K_3|=p'$ and
$|K_4|=|K_5|=|K_6|=q'$.  To decide if $\CK\in \Int(\CA,p,q)$,
 we need  to test  (as proved in \cite{BVW})  that  $\CL$ composed with $\CK$ is in $\Int(\CA,p,q)$ on  proper subsets $\CL\in \Int(\CA,p',q')$.
So by induction, Theorem \ref{criterion}, and invariance of $\edim_{Q,B}$ by symmetry, we obtain our proposition.

\end{proof}

\section{Holomorphic discrete series and tensor product}\label{ds}

Let $G$ be a connected semisimple Lie group, with finite center and maximal compact subgroup $K$. The representation of $G$ in $L^2(G)$
has a discrete series if and only of $\rank(G)=\rank(K)$. If furthermore $G/K$ is  an hermitian symmetric space,
 within the discrete series, we have a series of representations called the holomorphic  discrete series.
 The realization of a representation belonging to the holomorphic  discrete series is very simple both analytically and algebraically. We will though adopt the algebraic point of view.

We thus assume that $G/K$ is provided with a $G$-invariant complex structure.
We start by introducing some general notation. We denote by capital latin letters the groups involved and by the corresponding german letters the corresponding Lie algebras.
We use the superscript $^*$ for dual spaces and the subscript $_\C$ for complexifications; for example, we write $\g^*$ and $\g_\C$.
 Let $\g=\k \oplus \p$  be the Cartan decomposition of $\g$.
 The complex structure of $G/K$  determines   the $K$-invariant decomposition  $\p_\C=\p_+\oplus \p_-$ where $\p_+$, $\p_-$ are abelian subalgebras of $\g_\C$. There exists $z\in \sqrt{-1}\k$ central in $\k_\C$ such that $\p^+=\{X\in \g_\C, [z,X]= X\}$.
 This element $z$ is uniquely determined modulo an element of the center of $\g_\C$.

\bigskip

{\bf Example $G=U(2,3).$}

The maximal  compact subgroup is $K=U(2)\times U(3)$.
The Lie algebra $\g=\u(2,3)$ in block form is given as
$\g=\left \{\left(\begin{array}{cc} a & b  \\b^* & d\\  \end{array}\right)\right\}$  where $ a\in \u(2), d \in  \u(3)$, $b$ is any   $3\times2$ complex matrix and $b^*$ is the matrix conjugate transpose of $b$.
The Cartan decomposition is given by $\k=\u(2) \oplus \u(3)=\left \{\left(\begin{array}{cc} a & 0  \\0 & d\\  \end{array}\right)\right \}$ and $\p =\left\{\left(\begin{array}{cc} 0 & b  \\b^* & 0\\  \end{array}\right)\right\}.$

The complexification of $\g$ is $\gl(5,\C)$.
We have $\p_\C=\p_{+}\oplus \p_-$
where  a matrix in  $\p_+$ have the block form
$\left(\begin{array}{cc}0 & *  \\0 & 0\\  \end{array}\right)$ and  a matrix in $\p_{-}$  have the block form
$\left(\begin{array}{cc}0 & 0  \\ *& 0\\  \end{array}\right).$
Each $\p_{\pm}$ is stable under the adjoint action of $K_\C=GL(2,\C)\times GL(3,\C)$.
More explicitly $\p_+$
identifies to $$\Hom(\C e_3\oplus\C  e_4\oplus \C e_5 , \C e_1\oplus \C e_2)$$  with action of  $K_\C=GL(2,\C)\times GL(3,\C)$ given by $g_1 r g_2^{-1}$
 if $r\in \Hom(\C e_3\oplus\C  e_4\oplus \C e_5 , \C e_1\oplus \C e_2)$. Similarly  for $\p_-$.


\bigskip

 Let $T$ be a Cartan subgroup of $K$, thus $T$ is also a Cartan subgroup of $G$.
  Let $\Lambda \subset (\sqrt{-1}\t)^{*}$ be the lattice of weights of $T.$ The central element $z$ belongs to $\sqrt{-1}\t$.
  We denote by $W_K$ the Weyl group of $K$.
Let $\Delta=\Delta(\g_\C,\t_\C)$ be the roots of $\g_\C$ with respect to $\t_\C.$ We write   $\g_\C=\t_\C\oplus\sum_{\alpha \in \Delta} \g_\alpha$ for the root space decomposition. If $\alpha \in \Delta$, its coroot  $H_\alpha$  is in $\sqrt{-1}\t$ and satisfies $\alpha(H_\alpha)=2.$ The roots are called compact or noncompact depending whether the root space  is in $\k_\C$ or in $\p_\C.$ We denote by $\Delta_c, \Delta_n$ the set of compact and noncompact roots, with $\Delta_n$ decomposed in $\Delta_n^+\cup \Delta_n^-$, according to the decomposition  $\p_\C=\p_+\oplus \p_-$.
Choose a system  $\Delta_c^+$ of positive compact roots, and let  $\Delta^+=\Delta_c^+\cup \Delta_n^+$.
Let $\lambda\in (\sqrt{-1}\t)^*$ such that $\lambda(H_\alpha)\geq 0$ for all $\alpha\in \Delta^+$. Then
 $\langle\lambda, z\rangle \geq 0$  since $\lambda \in \sum_{\alpha \in \Delta^+}\R_{\geq 0}\alpha$, and $\alpha(z)=0$ for $\alpha$ compact,
 while $\alpha(z)=1$ for $\alpha$ non compact positive.

Let $\lambda\in \Lambda$ be such that $\lambda( H_\alpha)\geq  0$ for all $\alpha\in \Delta_c^+$.
We denote by $V_\lambda$ the irreducible representation of $K$ (or $K_\C$) with highest weight $\lambda$.
 Consider $\pi_\lambda$  the generalized Verma module  defined as $$\pi_\lambda=\CU(\g_\C) \otimes _{\CU(\k_\C\oplus \p_-)} V_\lambda$$ where $V_\lambda$ is  the $K$-module of highest weight $\lambda$ and we extend it to be a $\CU(\p_-)$ module by making $\p_-$ acting trivially (here  $\CU $ stands for the universal enveloping algebra). Thus
  the representation $\pi_\lambda$ has highest weight $\lambda,$ for the system $\Delta_c^+\cup -\Delta_n^+$,
 and as a $K$-module,

 \begin{equation}\label{repK}
  \pi_\lambda=\Sym(\p_+)\otimes V_\lambda.
  \end{equation}

Remark that the action of $z$ in $\pi_\lambda$ have  non negative eigenvalues, namely the set of eigenvalues is  $\{\langle\lambda,z\rangle +n,n\geq 0\}$, and each eigenvalue has finite multiplicity.

If $\rho=\frac12 \sum_{\alpha \in \Delta^+ } \alpha$, $\rho_c= \frac12 \sum_{\alpha \in \Delta_c^+ } \alpha$ , $\rho_n =\frac12 \sum_{\alpha \in \Delta_n^+ }$, then  the infinitesimal character of $\pi_\lambda$ is $\lambda+\rho_c-\rho_n.$
If $\langle \lambda+\rho_c-\rho_n, H_\alpha \rangle >0$ for all $\alpha\in \Delta^+$, then
Harish-Chandra has constructed a unitary irreducible representation of $G$ with corresponding $(\g,K)$ module $\pi_\lambda$.
Such representation belongs to the $\bf{ holomorphic \ discrete \ series}$ and we denote it still by $\pi_\lambda.$
The name holomorphic comes from the fact that it can be realized as a subspace of the  space of holomorphic sections of the holomorphic bundle $G\times_K V_\lambda$ on $G/K$,   the condition $\langle\lambda+\rho_c-\rho_n, H_\alpha\rangle >0$ assuring the existence of square integrable holomorphic sections.

{\bf Example $U(p,q)$} \label{exUpq}
Consider   $U(p,q)$, with $\sqrt{-1}\t$ consisting of  diagonal  $(p+q)\times (p+q)$ matrices with real entries.
Let $h_k$  be the diagonal $(p+q)\times (p+q)$ matrix with $1$ at the $k$-th place, $0$ at other places. Then $(h_k)_{k=1}^{p+q}$ is a basis of  $\sqrt{-1}\t$.
We denote by $\epsilon^k$ the  dual basis.
The Weyl group $W_K$ is the product of the permutation groups $\Sigma_p\times \Sigma_q$.

 Consider the standard system of positive roots $$\Delta^{+}=\{\epsilon^i-\epsilon^j, 1 \leq i<j \leq p+q\}.$$
 Then $$\Delta_c^+=\{\epsilon^i-\epsilon^j, 1 \leq i<j \leq p\}\cup \{\epsilon^i-\epsilon^j, p+1\leq i<j \leq p+q\}.$$
  $$\Delta_n^+=\{\epsilon^i-\epsilon^j, 1 \leq i \leq p, p+1\leq j \leq p+q\}.$$

 Then $\rho_n=\frac{1}{2}(q \sum_{i=1}^p \epsilon^i -p\sum_{j=1}^{q-p} \epsilon^{j+p})$
and $(\rho_n-\rho_c)(h_p-h_{p+1})=(p+q)-1$.
Let $\lambda=\sum_{i=1}^{p+q}\lambda_i \epsilon^i$ in $(\sqrt{-1}\t)^*$ with integral coefficients.
The condition $\langle\lambda+\rho_c-\rho_n,H_\alpha\rangle > 0$ for all compact roots gives
separately
$$\lambda_1\geq \lambda_2\geq\cdots\geq  \lambda_p,$$
$$\lambda_{p+1}\geq \lambda_{p+2}\geq \cdots\geq \lambda_{p+q},$$
and  $\langle \lambda+\rho_c-\rho_n,H_{\epsilon^p-\epsilon^{p+1}}\rangle > 0$
gives the condition  $\lambda_p > \lambda_{p+1}+p+q-1$.

\bigskip

Let $\lambda,\mu,\nu$ be dominant weights for $K$.
Define $m_D^\nu (\lambda, \mu) $ to be the multiplicity of the representation $V_\nu$ in the representation   $V_\lambda \otimes  V_{\mu}\otimes \Sym(\p_+)$ of  $K$

Recall the following theorem \cite{Repka},\cite{JakobsenVergne}.

\begin{theorem}\label{decdiscrete} The tensor product of two representations  $\pi_\lambda, \pi_{\mu}$ belonging to the holomorphic discrete series   is a direct sum, with finite multiplicities,  of representations belonging to the holomorphic discrete series:
$$\pi_\lambda \otimes \pi_{\mu}=\otimes_{\nu} m_{\hol}^{\nu}(\lambda,\mu) \pi_\nu.$$
We have:
\begin{equation}\label{tensorproduct} m_{\hol}^\nu(\lambda,\mu )=m_D^{\nu}(\lambda,\mu).\end{equation} \end{theorem}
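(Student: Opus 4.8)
The statement has two parts: that $\pi_\lambda\otimes\pi_\mu$ is a discrete direct sum, with finite multiplicities, of holomorphic discrete series representations, and the multiplicity identity $m_{\hol}^\nu(\lambda,\mu)=m_D^\nu(\lambda,\mu)$. The plan is to get the first part from a spectral argument for the central element $z$, and the second by applying the $\p_+$-coinvariants functor $M\mapsto M/\p_+M$ together with the fact that each $\pi_\lambda$ is a free $\CU(\p_+)=\Sym(\p_+)$-module.

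For the first part: $\pi_\lambda\otimes\pi_\mu$ is unitary, being a tensor product of unitary representations. On $\pi_\lambda$ the element $z\in\sqrt{-1}\k$ acts with spectrum $\{\langle\lambda,z\rangle+m:m\geq0\}$ and finite-dimensional eigenspaces, and likewise on $\pi_\mu$; hence on $\pi_\lambda\otimes\pi_\mu$ the spectrum of $z$ lies in $\langle\lambda+\mu,z\rangle+\Z_{\geq0}$ with finite-dimensional eigenspaces. Since $z$ acts on a $K$-type $V_\sigma$ by the scalar $\langle\sigma,z\rangle$, each $K$-type occurs with finite multiplicity, so $\pi_\lambda\otimes\pi_\mu$ is admissible; being unitary it is a discrete Hilbert sum of irreducible unitary representations, each with $z$ bounded below and therefore a lowest weight module. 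A positivity check on the lowest weights $\nu$ occurring --- done in \cite{Repka}, \cite{JakobsenVergne}, and using that $\lambda,\mu$ lie in the Harish-Chandra range --- shows each constituent is a holomorphic discrete series representation. Thus $\pi_\lambda\otimes\pi_\mu=\bigoplus_\nu m_{\hol}^\nu(\lambda,\mu)\,\pi_\nu$, with finite multiplicities by finiteness of the $z$-eigenspaces.

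For the identity, recall (\ref{repK}) that $\pi_\lambda=\Sym(\p_+)\otimes V_\lambda$ as a $K$-module; moreover, by the Poincar\'e--Birkhoff--Witt decomposition $\g_\C=\p_+\oplus(\k_\C\oplus\p_-)$, the abelian algebra $\p_+\subset\g$ acts on $\pi_\lambda$ by left multiplication on the factor $\CU(\p_+)=\Sym(\p_+)$, so $\pi_\lambda$ is $\CU(\p_+)$-free and $\pi_\lambda/\p_+\pi_\lambda\cong V_\lambda$ as $K$-modules; the same holds for $\pi_\mu$, and for every holomorphic discrete series $\pi_\nu$ one gets $\pi_\nu/\p_+\pi_\nu\cong V_\nu$. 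The functor $M\mapsto M/\p_+M$ is right exact and $\pi_\lambda\otimes\pi_\mu$ is semisimple (being unitary), so the first part gives
\[
  (\pi_\lambda\otimes\pi_\mu)/\p_+(\pi_\lambda\otimes\pi_\mu)\;\cong\;\bigoplus_\nu m_{\hol}^\nu(\lambda,\mu)\,V_\nu
\]
as $K$-modules. On the other hand, on $M=\pi_\lambda\otimes\pi_\mu=\Sym(\p_+)\otimes\Sym(\p_+)\otimes V_\lambda\otimes V_\mu$ the algebra $\p_+$ acts by simultaneous multiplication on the two symmetric factors, i.e.\ via the ideal generated by the diagonal copy $\Delta\p_+\subset\p_+\oplus\p_+$; since $\Delta\p_+$ and $0\oplus\p_+$ are $K$-stable, intersect trivially and span $\p_+\oplus\p_+$, we have $\Sym(\p_+\oplus\p_+)=\Sym(\Delta\p_+)\otimes\Sym(\p_+)$ as $K$-algebras, whence
\[
  (\pi_\lambda\otimes\pi_\mu)/\p_+(\pi_\lambda\otimes\pi_\mu)\;\cong\;\Sym(\p_+)\otimes V_\lambda\otimes V_\mu
\]
as $K$-modules. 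Comparing and applying $\Hom_K(V_\nu,-)$ gives $m_{\hol}^\nu(\lambda,\mu)=\dim\Hom_K(V_\nu,V_\lambda\otimes V_\mu\otimes\Sym(\p_+))=m_D^\nu(\lambda,\mu)$.

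The main obstacle is the first part: excluding unitary lowest weight modules that are not holomorphic discrete series needs the positivity bookkeeping on $(\lambda,\mu,\nu)$ and the hypothesis that $\lambda,\mu$ satisfy the Harish-Chandra inequalities. Once semisimplicity and the identification of the constituents are in hand, the multiplicity identity is formal, resting only on the $\CU(\p_+)$-freeness of $\pi_\lambda$ and $\pi_\mu$ and the $\p_+$-coinvariants computation.
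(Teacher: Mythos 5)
Your argument is correct, and it is genuinely more than what the paper does: the paper does not prove Theorem \ref{decdiscrete} but recalls it from \cite{Repka} and \cite{JakobsenVergne}, and then offers only a ``rapid check'' of consistency by restricting both sides of (\ref{tensorproduct}) to $K$ and observing that each restricts to $V_\lambda\otimes V_\mu\otimes\Sym(\p_+)\otimes\Sym(\p_+)$ --- a computation that by itself cannot separate $m_{\hol}^\nu$ from $m_D^\nu$. Your $\p_+$-coinvariants argument upgrades that consistency check to an actual derivation of the multiplicity identity: the key extra inputs are that $\pi_\lambda$ is $\CU(\p_+)$-free with $\pi_\lambda/\p_+\pi_\lambda\cong V_\lambda$, that coinvariants commute with the (discrete, admissible) direct sum, and the $K$-equivariant splitting $\Sym(\p_+\oplus\p_+)=\Sym(\Delta\p_+)\otimes\Sym(0\oplus\p_+)$, which kills exactly one of the two $\Sym(\p_+)$ factors and leaves $V_\lambda\otimes V_\mu\otimes\Sym(\p_+)$. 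This is presumably the ``not difficult'' proof the authors allude to. Two points worth flagging explicitly if you write this up: (i) the identification of the $K$-finite vectors of the Hilbert tensor product with the algebraic tensor product of the $K$-finite parts (which follows from the finite-dimensionality of the $z$-eigenspaces); and (ii) the one step you rightly defer to \cite{Repka}, \cite{JakobsenVergne}, namely that the lowest weights $\nu$ of the unitary lowest weight constituents actually satisfy the Harish-Chandra inequalities --- without this, a constituent could a priori be a unitary lowest weight module outside the discrete series, and this is the only place where the hypothesis on $\lambda,\mu$ is genuinely used. With those caveats, your proof is complete and matches the statement, including the parenthetical remark in the paper about the Jordan--H\"older interpretation when the parameters are not in the Harish-Chandra range (where your coinvariants computation still gives $m_D^\nu$, but semisimplicity fails).
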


In other words, if the parameters $\lambda,\mu$  satisfy Harish-Chandra inequalities, and if $m_D^{\nu}(\lambda,\mu)$ is not zero, then
the parameter $\nu$ satisfy also  Harish-Chandra inequalities and   $m_{\hol}^\nu(\lambda,\mu )=m_D^{\nu}(\lambda,\mu).$

(If the parameters $\lambda,\mu$ do not satisfy Harish-Chandra inequalities, then the $(\g,K)$ module  $\pi_\lambda$ may not be irreducible, nor unitarizable. However there is a Jordan-H{\"o}lder series for  $\pi_\lambda\otimes  \pi_{\mu}$ by modules  $\pi_\nu$ and with multiplicity
$m_D^{\nu}(\lambda,\mu).$)

 Let us give a rapid check (not a proof, but the proof is not difficult) of the consistency of the formula by  restricting both members to $K$. The decomposition of the left hand side of Eq.\ref{tensorproduct} restricted to $K$ is

$$(\pi_\lambda \otimes \pi_{\mu})_{|_\k}=\Sym(\p_+)\otimes V _\lambda\otimes \Sym(\p_+)\otimes V_{\mu}.$$

The decomposition of the right hand side is
$$\sum_\nu m_D^{\nu}(\lambda,\mu )\pi_\nu|_K=
\sum_\nu m_D^{\nu}( \lambda,\mu ) \left(V_\nu\otimes \Sym(\p_+)\right)$$
$$=
 \left(\sum_\nu m_D^{\nu}(\lambda,\mu ) V_\nu\right)\otimes \Sym(\p_+).$$

Now  $$\sum_\nu m_D^{\nu}(\lambda,\mu ) V_\nu=V_\lambda\otimes V_{\mu}\otimes \Sym(\p_+)$$
 by definition of $m_D^{\nu} (\lambda,\mu )$. So the right hand side is

$$V_\lambda\otimes V_{\mu}\otimes \Sym(\p_+)\otimes \Sym(\p_+)$$ and thus Eq. \ref{tensorproduct} is true with respect to the $K$ action.
\section{ Geometric analogue}\label{orbit}

As in the preceding section, we consider an hermitian symmetric space $G/K$, and the choice
of the positive root system $\Delta^+=\Delta_c^+\cup \Delta_n^+$.
Consider $\c_{\geq 0}=\{\lambda\in (\sqrt{-1}\t)^*, \lambda(H_\alpha)\geq 0; \alpha\in \Delta^+\}$, the corresponding Weyl chamber.
{\bf In this section, we do not assume $\lambda$ in the lattice of weights}.
If $\lambda\in \c_{\geq 0}$ we denote by $\CO^{nc}_\lambda\subset (\sqrt{-1}\g)^*$ its coadjoint orbit under the action of $G$.
Moment maps for $G$-Hamiltonian spaces (or $K$-Hamiltonian spaces) are valued in $(\sqrt{-1}\g)^*$ (or $(\sqrt{-1}\k)^*$).
In particular, if  the compact group $K$ acts on a complex  vector space $E$  provided with a $K$-invariant Hermitian  form $\langle,\rangle$ that we take antilinear in the first variable,
 the moment map  $\mu: E\to (\sqrt{-1}\k)^*$  is given by $\mu(r)(X)=\langle  r, X r\rangle$, for $r\in E$ and $X\in (\sqrt{-1}\k)$ .

 Recall that $\CO^{nc}_\lambda$  is a (non compact)  $G$-Hamiltonian manifold, the moment map $\CO^{nc}_\lambda\to (\sqrt{-1}\g)^*$ being the inclusion. Since $\CO^{nc}_\lambda$ is provided with a $G$-invariant K{\"a}hler structure, we will call
 $\CO^{nc}_\lambda$ an holomorphic orbit.
The set $G\c_{\geq 0}$ is a convex cone  in $(\sqrt{-1}\g)^*$, as proved by  Vinberg (\cite{MR565090})
 (see \cite{MR1337191}). So the sum $\CO^{nc}_\lambda+\CO^{nc}_\mu$ of two holomorphic orbits is an union of holomorphic orbits.

We  denote by $\c^K_{\geq 0}=\{\lambda\in (\sqrt{-1}\t)^*, \lambda(H_\alpha)\geq 0; \alpha\in \Delta_c^+\}$,
the Weyl chamber for $K$.

Let $z$ in the center of $\sqrt{-1}\k$ such that $\p^+=\{X\in \p_\C, [z,X]=X\}$.
If $f\in  (\sqrt{-1}\g)^*$, the value $\langle f,z\rangle$ is real.
The geometric analogue of the fact that, in a representation $\pi_\lambda$ of the holomorphic discrete series,  the eigenvalues of $z$ are  positive
 and the corresponding eigenspace is with finite multiplicity is the  content of
the following proposition (see \cite{MR3646030}).

\begin{proposition}\label{proper}
Let $\lambda\in \c_{\geq 0}$,  let $\CO^{nc}_\lambda$ its coadjoint orbit, and let $f\in \CO^{nc}_\lambda\subset (\sqrt{-1}\g)^*$.
Then
\begin{enumerate}
\item $ \langle f,z\rangle\geq  \langle \lambda,z\rangle\geq 0$.

\item  For any $t\in \R$, the set $$\CO^{nc}_\lambda(t)=\{\xi\in \CO^{nc}_\lambda,  \langle f,z\rangle=t\}$$
is a compact subset of $\CO^{nc}_\lambda$.

\end{enumerate}

In other words, the function $p:\CO^{nc}_\lambda\to \R$ given by $p(f)=\langle f,z\rangle$ is bounded from below by
$\langle \lambda,z\rangle\geq 0$. Furthermore the fiber of $p$ is compact.

\end{proposition}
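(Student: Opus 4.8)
The plan is to use the Cartan decomposition to reduce both assertions to an elementary statement about a single real function on $\p$, and then to analyse that function through the spectral decomposition of $\operatorname{ad}(Y)^{2}$. Fix a $G$‑invariant form identifying $(\sqrt{-1}\g)^{*}$ with $\sqrt{-1}\g$, so that $\CO^{nc}_\lambda$ becomes the adjoint orbit of $\lambda\in\sqrt{-1}\t$ and $p\colon f\mapsto\langle f,z\rangle$ the restriction of a linear functional; since $\lambda$ is elliptic, $\CO^{nc}_\lambda$ is closed in $\sqrt{-1}\g$, so for (2) it is enough to show that $p^{-1}(t)$ is bounded. As $z$ lies in the centre of $\k$ it is fixed by $\operatorname{Ad}(K)$, so using $G=K\exp(\p)$ every point of $\CO^{nc}_\lambda$ is $\operatorname{Ad}(k)e^{\operatorname{ad}(Y)}\lambda$ with $k\in K$, $Y\in\p$, and
\[
p\bigl(\operatorname{Ad}(k)e^{\operatorname{ad}(Y)}\lambda\bigr)=F(Y):=\langle e^{\operatorname{ad}(Y)}\lambda,z\rangle .
\]
Since $\operatorname{ad}(Y)$ with $Y\in\p$ interchanges $\k_\C$ and $\p_\C$, while $\lambda,z\in\t_\C\subset\k_\C$ and $\k\perp\p$ for the invariant form, only the even part of the exponential survives: $F(Y)=\langle\cosh(\operatorname{ad}Y)\lambda,z\rangle$.

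Now $A:=\operatorname{ad}(Y)^{2}|_{\sqrt{-1}\k}$ is self‑adjoint and positive semidefinite for the positive $K$‑invariant inner product on $\sqrt{-1}\k$ (because $\operatorname{ad}(Y)$ is self‑adjoint for it), and $\cosh(\operatorname{ad}Y)|_{\sqrt{-1}\k}=\phi(A)$ with $\phi(u)=\cosh\sqrt{u}\ge 1$. Writing its spectral decomposition $A=\sum_{j}s_{j}^{2}Q_{j}$ ($s_{j}\ge 0$, $Q_{j}$ orthogonal projections) we get
\[
F(Y)=\sum_{j}\cosh(s_{j})\,c_{j},\qquad c_{j}:=\langle Q_{j}\lambda,z\rangle,\qquad \langle\lambda,z\rangle=\sum_{j}c_{j}.
\]
Thus (1) follows once one knows $c_{j}\ge 0$ for all $j$: then $F(Y)=\sum_{j}\cosh(s_{j})c_{j}\ge\sum_{j}c_{j}=\langle\lambda,z\rangle$, and $\langle\lambda,z\rangle\ge 0$ by the elementary fact already recorded ($\lambda$ lies in the $\R_{\ge 0}$‑span of $\Delta^{+}$ and $\langle\alpha,z\rangle\ge 0$ for $\alpha\in\Delta^{+}$). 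The positivity of the $c_{j}$ is the heart of the matter: it is the exact geometric counterpart of the positivity of the eigenvalues of $z$ on $\pi_\lambda=\Sym(\p_{+})\otimes V_\lambda$. It is proved using that $\langle\cdot,z\rangle$ is $\operatorname{Ad}(K)$‑invariant on $\sqrt{-1}\k$ and equals (a positive multiple of) $w\mapsto\operatorname{tr}(\operatorname{ad}(w)|_{\p_{+}})$, that the $Q_{j}$ are compatible with the decomposition of $\k$ into its centre and simple factors, and that for $s_{j}\neq 0$ one has $Q_{j}\lambda\perp\z(\g_\C)$ — after which dominance of $\lambda$ forces $c_{j}\ge 0$.

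For (2) combine the conserved Casimir $\langle f,f\rangle=\langle\lambda,\lambda\rangle$, which is constant on the orbit and gives $\|f_\k\|^{2}-\|f_\p\|^{2}=\text{const}$ for a fixed positive norm $\|\cdot\|$, reducing boundedness of $p^{-1}(t)$ to boundedness of $\|f_\k\|^{2}$, together with the refined form of the crux: for $s_{j}\neq 0$ one has $c_{j}=0$ only when $Q_{j}\lambda=0$. Indeed, in
\[
\|f_\k\|^{2}=\|\cosh(\operatorname{ad}Y)\lambda\|^{2}=\sum_{j}\cosh^{2}(s_{j})\,\|Q_{j}\lambda\|^{2},
\]
the terms with $s_{j}\neq 0$ then have $c_{j}>0$, so on $\{F=t\}$ we get $\cosh(s_{j})\le t/c_{j}$ and that term is bounded, while the $s_{j}=0$ part is the orthogonal projection of $\lambda$ onto $\ker A$, of norm $\le\|\lambda\|$. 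Hence $\|f_\k\|$, and therefore $f$, stays bounded on $p^{-1}(t)$; being closed and bounded, $p^{-1}(t)$ is compact.

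There is a conceptual reformulation (the viewpoint of \cite{MR3646030}): $p$ is, up to an affine shift, the moment map for the holomorphic Hamiltonian action on the Kähler manifold $\CO^{nc}_\lambda$ of the circle in the centre of $K$ generated by $z$; the computation above shows its fixed‑point set is exactly the compact $K$‑orbit $\CO^{K}_\lambda$, along which $p\equiv\langle\lambda,z\rangle$ and the weights of the circle on the holomorphic normal bundle are all $+1$ (that normal bundle being the $\p_{+}$‑part of the tangent space, $\p_{+}$ being the $+1$‑eigenspace of $\operatorname{ad}(z)$); so $p$ has a nondegenerate minimum along $\CO^{K}_\lambda$ and, near it, $\CO^{nc}_\lambda$ is modelled on that normal bundle with $p=\langle\lambda,z\rangle+\tfrac12\|\nu\|^{2}$, making the level sets sphere bundles over the compact base $\CO^{K}_\lambda$. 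I expect the genuinely delicate point to be precisely the (refined) positivity statement for the coefficients $c_{j}$ — everything else is bookkeeping with the Cartan decomposition and the $\operatorname{ad}(z)$‑grading — and this is the step where one either carries out the root‑theoretic estimate in detail or appeals to the analysis of holomorphic coadjoint orbits in \cite{MR3646030}.
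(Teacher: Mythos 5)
Your reduction of $p$ to $F(Y)=\langle\cosh(\operatorname{ad}Y)\lambda,z\rangle$ via $G=K\exp(\p)$, together with the spectral decomposition $\operatorname{ad}(Y)^2|_{\sqrt{-1}\k}=\sum_j s_j^2Q_j$, is a sound start and is essentially a coordinate-free repackaging of what the paper actually invokes (the $KAK$ decomposition and Harish-Chandra's strongly orthogonal roots). But as written the proof has two gaps. For part (1) everything hinges on $c_j=\langle Q_j\lambda,z\rangle\geq 0$, and the sketch ``dominance of $\lambda$ forces $c_j\geq 0$'' does not yet prove it: for general $Y$ the projection $Q_j\lambda$ does not lie in $\sqrt{-1}\t$, so dominance cannot be applied directly. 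The missing step is precisely where the strongly orthogonal roots do the work: conjugate $Y$ into a maximal abelian $\mathfrak{a}\subset\p$, check that the $\t$-component of $Q_jz$ is then a nonnegative multiple of a coroot $H_{\gamma}$ with $\gamma\in\Delta_n^+$, and control the resulting general point $\operatorname{Ad}(k)\lambda$ of the compact orbit $O_\lambda$ by Kostant convexity (Schur--Horn), using $\lambda(H_\gamma)\geq 0$. You correctly identify this as the crux but leave it unproved.

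For part (2) the argument is actually broken, not merely incomplete. The ``refined crux'' --- that $s_j\neq 0$ and $c_j=0$ force $Q_j\lambda=0$ --- is false: already for $\u(2,1)$ with $Y$ along $e_1$ in $\p\cong\C^2$, $\operatorname{ad}(Y)^2$ has a nonzero eigenvalue on the off-diagonal part of the $\u(2)$-block; since $z$ is central in $\k$ that eigenspace is orthogonal to $z$, so $c_j=0$, yet the projection of $\lambda$ onto it (after conjugating $Y$ to standard position, i.e.\ for a generic point of $O_\lambda$) is nonzero whenever $\lambda_1>\lambda_2$. Moreover, even on the eigenspaces with $c_j>0$, the bound $\cosh(s_j)\leq t/c_j$ is not uniform over the level set, since $c_j$ depends on $Y$ and can tend to $0$ as $Y$ approaches a direction commuting with $\lambda$. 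Compactness in fact results from a cancellation $\cosh^2(s_j)\,\|Q_j\lambda\|^2=O(1)$ in which $\|Q_j\lambda\|^2$ degenerates at exactly the rate that $\cosh^2(s_j)$ blows up, and exhibiting this again requires the explicit strongly-orthogonal-root coordinates (on the one-dimensional eigenspaces spanned by conjugates of $H_\gamma$ one has $\|Q_j\lambda\|^2$ proportional to $c_j^2$, which is what saves the estimate). The closing Morse-theoretic picture of $p$ as the moment map of the central circle is a good heuristic but only gives a local model near the minimum, so it does not substitute for the global estimate either.
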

The proof  uses the decomposition of $G$ as $KAK$ and  Harish-Chandra description of  a Cartan subalgebra of $\p$ by strongly orthogonal roots.

\bigskip

Let $\lambda,\mu\in \c_{\geq 0}$.
The moment map for the diagonal action of $G$  on the product $\CO^{nc}_\lambda \times \CO^{nc}_\mu$ is the addition  $A:\CO^{nc}_\lambda \times \CO^{nc}_\mu \to \sqrt{-1}\g^*$:
$A(f_1,f_2)=f_1+f_2$.
It is easy to see that $A$ is proper.

The geometric analogue of the fact that a tensor product of representation of the holomorphic discrete series is a direct sum of holomorphic discrete series,
each one occurring with finite multiplicity, is the following proposition, which is a direct consequence of the fact that
$G{\c_\geq0}$ is a convex cone and of Proposition \ref{proper}.

\begin{proposition}
Let $\lambda,\mu$  in $\c_{\geq 0}$, and $\CO^{nc}_\lambda$, $\CO^{nc}_\mu$ be the corresponding coadjoint orbits.
Then
\begin{enumerate}

\item  $\CO^{nc}_\lambda+\CO^{nc}_\mu$ is an union of orbits of elements of $\c_{\geq 0}$.

\item  For any $\xi\in (\sqrt{-1}\g)^*$, the set $$(\CO^{nc}_\lambda\times \CO^{nc}_\mu)(\xi)=\{(f_1,f_2) \in \CO^{nc}_\lambda \times \CO^{nc}_\mu; f_1+f_2=\xi\}$$
is a compact subset of $\CO^{nc}_\lambda\times \CO^{nc}_\mu$.

\end{enumerate}
\end{proposition}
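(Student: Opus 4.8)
\noindent\emph{Sketch of the argument.}
The plan is to deduce both assertions from the two facts recalled just above: Vinberg's theorem, which says that $G\c_{\geq 0}$ is a closed convex cone in $(\sqrt{-1}\g)^*$, and Proposition \ref{proper}, which controls the function $p(f)=\langle f,z\rangle$ on an holomorphic orbit (on $\CO^{nc}_\lambda$ it is bounded below by $\langle\lambda,z\rangle\geq 0$, with compact level sets; equivalently, by the $KAK$ argument sketched there, $p$ is a proper exhaustion function on $\CO^{nc}_\lambda$), together with the properness of the addition map $A$ noted above.

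For (1), I would first observe that $\CO^{nc}_\lambda$ and $\CO^{nc}_\mu$ lie in $G\c_{\geq 0}$ by definition, and that a convex cone is stable under addition: if $u,v\in G\c_{\geq 0}$ then $\tfrac12(u+v)\in G\c_{\geq 0}$ by convexity, hence $u+v\in G\c_{\geq 0}$ since $G\c_{\geq 0}$ is a cone. Thus $\CO^{nc}_\lambda+\CO^{nc}_\mu\subseteq G\c_{\geq 0}$. On the other hand the addition map $A$ is equivariant for the diagonal $G$-action, so its image $\CO^{nc}_\lambda+\CO^{nc}_\mu$ is $G$-stable. Since any $G$-orbit meeting $G\c_{\geq 0}$ already meets $\c_{\geq 0}$ (if $x=g\cdot c$ with $c\in\c_{\geq 0}$, then $c=g^{-1}\cdot x$ lies in the orbit of $x$), a $G$-stable subset of $G\c_{\geq 0}$ is a union of orbits $\CO^{nc}_c$ with $c\in\c_{\geq 0}$, which is exactly the assertion.

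For (2), I would fix $\xi\in(\sqrt{-1}\g)^*$ and take $(f_1,f_2)$ with $f_1+f_2=\xi$. Pairing with $z$ gives $p(f_1)+p(f_2)=\langle\xi,z\rangle$, while Proposition \ref{proper} gives $p(f_1)\geq\langle\lambda,z\rangle$ and $p(f_2)\geq\langle\mu,z\rangle$; hence $p(f_1)$ lies in the bounded closed interval $I_\lambda=[\langle\lambda,z\rangle,\ \langle\xi,z\rangle-\langle\mu,z\rangle]$ and $p(f_2)$ in $I_\mu=[\langle\mu,z\rangle,\ \langle\xi,z\rangle-\langle\lambda,z\rangle]$. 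Thus the two projections send $(\CO^{nc}_\lambda\times\CO^{nc}_\mu)(\xi)$ into $p^{-1}(I_\lambda)\subseteq\CO^{nc}_\lambda$ and $p^{-1}(I_\mu)\subseteq\CO^{nc}_\mu$; by properness of $p$ these preimages are compact, and $(\CO^{nc}_\lambda\times\CO^{nc}_\mu)(\xi)$, being closed in the product of these two compact sets, is compact. (Equivalently, this is just the statement that the proper map $A$ has compact fibers, $(\CO^{nc}_\lambda\times\CO^{nc}_\mu)(\xi)=A^{-1}(\{\xi\})$.)

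I expect the only delicate point to be that one genuinely needs the \emph{properness} of $p$ on each $\CO^{nc}_\lambda$ — not merely that individual level sets are compact — so that $p^{-1}$ of a compact interval is compact; this is precisely what the $KAK$-decomposition argument behind Proposition \ref{proper} supplies, and it is the same input that makes the addition map $A$ proper. Everything else in the argument is formal.
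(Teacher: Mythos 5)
Your argument is correct and follows exactly the route the paper intends: the paper gives no written proof beyond asserting the proposition is "a direct consequence of the fact that $G\c_{\geq 0}$ is a convex cone and of Proposition \ref{proper}," and your two steps (stability of the convex cone $G\c_{\geq 0}$ under addition plus $G$-equivariance for part (1), and the pairing with $z$ together with properness of $p$ for part (2)) are precisely the details being left to the reader. Your closing caveat that one needs genuine properness of $p$ (preimages of compact intervals are compact), not merely compactness of individual level sets, is well taken and consistent with the paper's later use of the fact that $m\mapsto\langle\Phi(m),z\rangle$ is proper.
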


Define $P(\lambda,\mu)=(\CO^{nc}_\lambda+\CO^{nc}_\mu)\cap \c_{\geq 0}$.
It is a set parameterizing the moment map  image $A(\CO^{nc}_\lambda\times \CO^{nc}_\mu)$ modulo the action of $G$.

We now consider the case where $\langle \lambda,H_\alpha\rangle >0, \langle \mu,H_\alpha\rangle >0$ for all $\alpha\in \Delta_n^+$.
Let $O_\lambda, O_\mu $ be the (compact) coadjoint   $K$-orbits of $\lambda, \mu$.
Consider $\p^+$. This  is a $K$-Hamiltonian space with   moment map  $\Phi_+: \p^+\to (\sqrt{-1}\k)^*$.
Thus the product $O_\lambda\times O_\mu \times \p^+$ is  a $K$-Hamiltonian space,  with moment map
$\phi: O_\lambda\times O_\mu \times \p^+\to \sqrt{-1}\k^*$ given by
$\phi(\xi_1,\xi_2,Y)=\xi_1+\xi_2+\Phi_+(Y)$.
Let ${\rm Kir}(O_\lambda\times O_\mu \times \p^+)$ be the intersection of $\phi(O_\lambda\times O_\mu \times \p^+)$
 with the positive Weyl chamber $\c_{\geq 0}^K$.
It is a rational polyhedron.

The following proposition, due to P.E. Paradan \cite{Hornpq}, is the geometric analogue of Theorem \ref{decdiscrete}.
 It  can be deduced from Theorem \ref{decdiscrete} using Paradan's proof of the ``$[Q,R]=0$" theorem in the case of
holomorphic discrete series  \cite{MR3349304}. We sketch another proof based on Fourier transforms of orbits.

\begin{proposition}\label{sumorbits}
Let $\lambda,\mu \in \c_{\geq 0}$ such that  $\langle \lambda,H_\alpha\rangle >0$,  and $\langle \mu,H_\alpha\rangle >0$ for all $\alpha\in \Delta_n^+$.
Then
$$P(\lambda,\mu)={\rm Kir}(O_\lambda\times O_\mu\times \p^+).$$
\end{proposition}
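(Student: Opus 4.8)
The plan is to establish the two sides describe the same polyhedral set by relating both of them to the linear-algebra/moment-map data attached to the holomorphic orbits, using Rossmann's formula for the Fourier transform of a (co)adjoint orbit as the bridge. First I would recall that by Proposition \ref{proper} the function $p(f)=\langle f,z\rangle$ is proper and bounded below on each holomorphic orbit $\CO^{nc}_\lambda$, so the addition map $A\colon \CO^{nc}_\lambda\times\CO^{nc}_\mu\to(\sqrt{-1}\g)^*$ is proper; hence the pushforward of the Liouville measures, and more to the point the pushforward of the canonical equivariant forms, makes sense as a distribution on $(\sqrt{-1}\g)^*$. The set $P(\lambda,\mu)=(\CO^{nc}_\lambda+\CO^{nc}_\mu)\cap\c_{\geq 0}$ is exactly the intersection of $\c_{\geq 0}$ with the support of this pushforward. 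On the other side, ${\rm Kir}(O_\lambda\times O_\mu\times\p^+)$ is, by the Mumford/Guillemin--Sternberg convexity theorem for the proper moment map $\phi$ on the noncompact Hamiltonian $K$-space $O_\lambda\times O_\mu\times\p^+$, the intersection of $\c_{\geq 0}^K$ with the support of the pushforward of the corresponding Liouville distribution under $\phi$.

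Next I would compare the two distributions via Fourier transform on $\sqrt{-1}\k$. Because $\p^+$ carries a $z$-action with all weights equal to $1$, restriction to $\k$ of the $G$-orbital distribution of $\CO^{nc}_\lambda$ decomposes, by Rossmann's formula combined with the Harish-Chandra/Kirillov character formula, as a sum over $n\geq 0$ of $K$-orbital contributions twisted by $\Sym^n(\p^+)$ — concretely this is the geometric shadow of the $K$-module identity $\pi_\lambda|_K=\Sym(\p^+)\otimes V_\lambda$ (Eq.\ \ref{repK}) and of the $K$-side of Theorem \ref{decdiscrete}. Applying this to the product and using that the Fourier transform turns the fibered product $\CO^{nc}_\lambda\times\CO^{nc}_\mu$ over $G$ into the pointwise product of transforms, one finds that the $K$-invariant distribution on $\sqrt{-1}\k$ obtained by restricting the $A$-pushforward agrees with the $\phi$-pushforward of the Liouville distribution of $O_\lambda\times O_\mu\times\p^+$. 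Here the hypothesis $\langle\lambda,H_\alpha\rangle>0$, $\langle\mu,H_\alpha\rangle>0$ for $\alpha\in\Delta_n^+$ is used to guarantee that the orbits $O_\lambda,O_\mu$ are genuine (full-dimensional) coadjoint $K$-orbits and that the relevant denominators in the character/Rossmann formulas do not vanish, so the identification of distributions is clean.

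Then I would extract the geometric conclusion: two proper moment maps with the same invariant pushforward distribution have the same image intersected with the Weyl chamber, because in each case that intersection is precisely the support of the (nonnegative, measure-valued) Duistermaat--Heckman-type pushforward restricted to $\c_{\geq 0}$, and supports are determined by the distribution. Since $P(\lambda,\mu)$ is the $\c_{\geq 0}$-part of the first support and ${\rm Kir}(O_\lambda\times O_\mu\times\p^+)$ is the $\c_{\geq 0}^K$-part of the second, and since the $z$-positivity of Proposition \ref{proper}(1) forces everything in the image of $A$ to lie in the ``positive'' half where $\c_{\geq 0}$ and $\c_{\geq 0}^K$ record the same orbits, the two sets coincide.

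\medskip

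The main obstacle I anticipate is the careful handling of convergence and of the noncompact directions. Rossmann's formula and the pushforward identities are classically stated for compact orbits or tempered distributions, whereas $\CO^{nc}_\lambda$ is noncompact and $\p^+$ is a vector space; making the Fourier-transform manipulations rigorous requires exploiting the properness from Proposition \ref{proper} to control growth, and one must check that the ``fibered product equals pointwise product of transforms'' step is legitimate for these distributions (this is where the $z$-properness of $A$ really earns its keep). Once that analytic groundwork is in place, matching the two pushforwards is essentially the geometric echo of the already-established character identity in Theorem \ref{decdiscrete}, so I do not expect further serious difficulty there; the deduction that equal pushforwards give equal chamber-images is standard convexity theory.
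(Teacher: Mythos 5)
Your overall strategy -- equivariant volumes, Rossmann's formula, properness from Proposition \ref{proper}, and a support argument -- is exactly the one the paper uses, but the central identification of distributions is off in a way that would derail the proof if carried out literally. You claim that ``the $K$-invariant distribution on $\sqrt{-1}\k$ obtained by restricting the $A$-pushforward agrees with the $\phi$-pushforward of the Liouville distribution of $O_\lambda\times O_\mu\times \p^+$.'' Count the factors: Rossmann plus Harish-Chandra show that, as a $K$-Hamiltonian space, each $\CO^{nc}_\lambda$ has the Duistermaat--Heckman measure of $O_\lambda\times\p^+$, so the product $\CO^{nc}_\lambda\times\CO^{nc}_\mu$ restricted to $K$ carries \emph{two} copies of $\p^+$, while $N=O_\lambda\times O_\mu\times\p^+$ carries one. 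The two $K$-equivariant volumes differ by the factor $1/\prod_{\alpha\in\Delta_n^+}\langle\alpha,X\rangle$, so the distributions you want to equate are not equal. More fundamentally, $P(\lambda,\mu)$ is the set of $G$-orbit types in $\CO^{nc}_\lambda+\CO^{nc}_\mu$, not the set of $K$-orbit types, so even a correct identity of $K$-Duistermaat--Heckman measures would not yield the proposition; and your remark that $z$-positivity forces $\c_{\geq 0}$ and $\c^K_{\geq 0}$ to ``record the same orbits'' is not true -- $\c_{\geq 0}$ is a proper subcone of $\c^K_{\geq 0}$, and a $K$-orbit through a point of the chamber is much smaller than the $G$-orbit through the same point, so the two parameterizations are genuinely different.

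The missing idea is to disintegrate the two Duistermaat--Heckman measures along orbits of the \emph{relevant} group in each case: the $G$-DH measure of $\CO^{nc}_\lambda\times\CO^{nc}_\mu$ along $G$-orbits (Weyl integration on $(\sqrt{-1}\g)^*$), giving a measure $S$ supported on $P(\lambda,\mu)\subset\c_{\geq 0}$, and the $K$-DH measure of $N$ along $K$-orbits (Weyl integration on $(\sqrt{-1}\k)^*$), giving a measure $S'$ supported on ${\rm Kir}(O_\lambda\times O_\mu\times\p^+)\subset\c^K_{\geq 0}$. The Fourier transforms of the antisymmetrizations then satisfy
$$\Bigl(\prod_{\alpha\in\Delta^+}\langle\alpha,X\rangle\Bigr) I_\lambda(X)I_\mu(X)=\mathcal F(\ant(S))(X),\qquad
\Bigl(\prod_{\alpha\in\Delta_c^+}\langle\alpha,X\rangle\Bigr) I_N(X)=\mathcal F(\ant(S'))(X),$$
and the ratio of the two Weyl denominators is precisely $\prod_{\alpha\in\Delta_n^+}\langle\alpha,X\rangle$, which absorbs exactly one copy of $\p^+$; Rossmann's formula on the left and Harish-Chandra's formula on the right then give literally the same alternating sum over $W_K\times W_K$. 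One concludes $S=S'$ by the uniqueness statement for locally polynomial measures supported in the half-space $\langle\xi,z\rangle\geq 0$ whose Fourier transforms become analytic after multiplication by the Weyl denominator (this is where the properness you invoke is actually used), noting that $S$ and $S'$ are supported in the chamber so no cancellation occurs in the antisymmetrization. Your write-up has the right ingredients but skips the orbit-type disintegration, which is where the factor bookkeeping and the passage from $K$-data to $G$-data are actually resolved.
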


\begin{proof}

We will work with equivariant volumes.
If $M$ is a $G$-Hamiltonian space with Liouville measure $d\beta_M$ and proper moment map $\Phi: M\to (\sqrt{-1}\g)^*$,
the Duistermaat-Heckman measure of the  $G$-Hamiltonian space $M$ is (by definition) $\Phi_*d\beta_M$.
The $G$-equivariant volume of $M$ is
the generalized function of $X\in \g$ defined by
$$I_M(X)=\int_M e^{\langle \Phi(m),X\rangle }d\beta_M= \int_{(\sqrt{-1}\g)^*} e^{\langle \xi,X\rangle }\Phi_*(d\beta_M),$$
so is the Fourier transform of $\Phi_*(d\beta_M)$.

Similarly, if  $N$ is a $K$-Hamiltonian space with Liouville measure $d\beta_N$ and proper moment map $\phi: N\to (\sqrt{-1}\k)^*$,
the Duistermaat-Heckman measure of the  $K$-Hamiltonian space $N$ is  $\phi_*(d\beta_N)$.
The $K$-equivariant volume of $N$ is
the generalized function of $X\in \k$ defined by
$$I_N(X)=\int_N e^{\langle \phi(n),X\rangle }d\beta_N=\int_{(\sqrt{-1}\k)^*} e^{\langle \xi,X\rangle }\phi_*(d\beta_N),$$
so is the Fourier transform of $\phi_*(d\beta_N)$.

In the case of $M=\CO^{nc}_\lambda \times \CO^{nc}_\mu$ (resp. $N=O_\lambda\times O_\mu \times \p^+$),
the measure $\Phi_*(d\beta_M)$  is  easy to compute directly.
Equivalently, it is easy to give explicit formula for its  Fourier transform.
Since the function  $m\mapsto \langle \Phi(m),z\rangle $ on $M$   (resp. $n\mapsto \langle \phi(n),z\rangle $  on $N$)  is a proper map, the  restriction to $\t$  (even to $\R z\subset \t$) of the equivariant volumes of the non compact spaces $M$  or $N$
are well defined as generalized functions.

 We will also use the Fourier transform on $\t$ and the following result.
Let $m$ be a locally polynomial measure on $(\sqrt{-1}\t)^*$ and let us consider
$\CF(m)(X)=\int_{\xi\in (\sqrt{-1}\t)^*} e^{\langle X,\xi\rangle } dm(\xi)$
its Fourier transform as a generalized function on $\t$.
Assume that there exists  a set $\{l_k\in \t^*,k=1,\ldots, s\}$ of non zero linear forms on $\t$
such that $\prod_{k=1}^s l_k(X) \CF(m)(X)$ is analytic.
Assume furthermore that $m$ is supported on the closed halfspace $\langle \xi,z\rangle \geq 0$.
Then the equation $\prod_{k=1}^s l_k\CF(m)=0$ imply that $\CF(m)=0$, thus $m=0$.

\bigskip

Let $\lambda\in \c_{\geq 0}$.
Let $$I_\lambda(X)=\int_{\CO^{nc}_\lambda} e^{ \langle \xi,X\rangle }d\beta_\lambda$$
be the equivariant volume of $\CO^{nc}_\lambda$, that is
the Fourier transform of the orbit $\CO^{nc}_\lambda$ with its Liouville measure $d\beta_\lambda$.
The equivariant volume of the $G$-Hamiltonian space  $\CO^{nc}_\lambda\times \CO^{nc}_\mu$
is the product $I_\lambda(X) I_\mu(X)$ of the equivariant volumes $I_\lambda(X)$, $I_\mu(X)$ of $\CO^{nc}_\lambda$ and $\CO^{nc}_\mu$.
The product is well defined since the addition map $A: \CO^{nc}_\lambda+\CO^{nc}_\mu\to (\sqrt{-1}\g)^*$
is a proper map.
 Let $d\beta=d\beta_\lambda d\beta_\mu$ be the product of the Liouville measures.
 Then
$$I_\lambda(X)I_\mu(X)$$
$$= \int_{\CO^{nc}_\lambda}\int_{\CO^{nc}_\mu}e^{\langle  f_1,X\rangle }e^{\langle f_2,X\rangle} d\beta=
\int_{\CO^{nc}_\lambda\times \CO^{nc}_\mu}e^{\langle  f_1+f_2,X\rangle } d\beta $$
$$=\int_{\CO^{nc}_\lambda\times \CO^{nc}_\mu}e^{\langle  A(f_1,f_2),X\rangle } d\beta =\int_{(\sqrt{-1}\g)^*}e^{\langle \xi,X\rangle } DH(\xi) $$
with  $DH=A_*(d\beta)$ is the Duistermaat-Heckman measure for the
$G$-Hamiltonian space $\CO^{nc}_\lambda\times \CO^{nc}_\mu$ with moment map $A$.
So, by definition of $P(\lambda,\mu)$,  the support  of $DH$  is
$$\CO^{nc}_\lambda+\CO^{nc}_\mu=G\cdot( P(\lambda,\mu))$$
with $P(\lambda,\mu)\subset \c_{\geq 0}\subset (\sqrt{-1}\t)^*$. Remark that for every $\xi\in P(\lambda,\mu)$, $\langle \xi,z\rangle \geq 0$.
We now use the Weyl integration formula on  $G\c_{\geq 0}\subset (\sqrt{-1}\g)^*$.
Desintegrating the measure $DH$ by the Liouville measures of the $G$-orbits $G\xi$ with $\xi\in P(\lambda,\mu)$, we obtain a measure $S$ on
$P(\lambda,\mu)\subset \c_{\geq 0}\subset (\sqrt{-1}\t)^*$.
We consider the action of the compact Weyl group $W_K$ on $(\sqrt{-1}\t)^*$ and the antiinvariant measure $\ant(S)=\sum_{w\in W_K} \epsilon(w) w\cdot S$ on $(\sqrt{-1}\t)^*$.
Since $z$ is a central element in $\sqrt{-1}\k$, $\ant(S)$ is still supported on $\langle z, \xi\rangle\geq 0$.
 We have  the following formula:
For $X\in \t$:
$$\left(\prod_{\alpha\in \Delta_+}\langle \alpha,X\rangle \right)I_\lambda(X)I_\mu(X)=
\int_{\xi\in (\sqrt{-1}\t)^*}e^{\langle \xi,X\rangle } \ant(S)(\xi).$$

We now consider the $K$-Hamiltonian space  $N=O_\lambda\times O_\mu \times \p^+$, and  its moment map $\phi: N\to (\sqrt{-1}\k)^*$.
We see similarly that
$\langle \phi(n),z\rangle\geq \langle \lambda,z\rangle +\langle \mu,z\rangle $ for any $n\in N$,
and that  the moment map $\phi$ is proper.

Consider the equivariant volume of $N$:
$$I_N(X)=\int_N e^{ \langle X,\phi(n)\rangle} d\beta_N= \int_{\sqrt{-1}\k^*} e^{ \langle X,\xi\rangle} \phi_*(d\beta_N).$$
So $N$ gives rise to a $K$-invariant Duistermaat-Heckman measure $DH_N=\phi_*(d\beta_N)$ on $(\sqrt{-1}\k)^*$ supported on
$\phi(N)$.
By definition of  ${\rm Kir}(O_\lambda\times O_\mu \times \p^+)$,
$$\phi(N)= K\cdot({\rm Kir}(O_\lambda\times O_\mu \times \p^+))$$
with ${\rm Kir}(O_\lambda\times O_\mu \times \p^+)\subset \c_{\geq 0}^K\subset \sqrt{-1}\t^*$.

Similarly desintegrating the measure $DH_N=\phi_*(d\beta_N)$ by the Liouville measures of the $K$-orbits $K\xi$ with
$\xi\in {\rm Kir}(O_\lambda\times O_\mu \times \p^+)$, and using Weyl integration formula on $(\sqrt{-1}\k)^*$,
 we obtain a measure $S'$ supported on
${\rm Kir}(O_\lambda\times O_\mu \times \p^+)$.
Let $\ant(S')$ be the anti-invariant measure  defined by   $\ant(S')=\sum_{w\in W_K} \epsilon(w) w\cdot S'$ on $(\sqrt{-1}\t)^*$.
 We have  the following formula.
For $X\in \t$:
$$\prod_{\alpha\in \Delta_c^+}\langle \alpha,X\rangle I_N(X)=\int_{(\sqrt{-1}\t)^*} e^{\langle \xi,X\rangle } \ant(S')(\xi) .$$

Both measures $\ant(S)$ and $\ant(S')$ are supported on the closed halfspace $\langle \xi,z\rangle \geq 0$ and are locally polynomial measures on $(\sqrt{-1}\t)^*$. It is equivalent to prove that $S=S'$ or that $\ant(S)=\ant(S')$ since $S,S'$ are both supported in the Weyl chamber $\c^K_{\geq 0}$, so no cancelation can occur.
It is easy to see that  both  $(\prod_{\alpha\in \Delta^+}\langle \alpha,X\rangle) \CF(S)(X)$
 and $(\prod_{\alpha\in \Delta^+}\langle \alpha,X\rangle) \CF(S')(X)$ are analytic (see for example Proposition 32 of \cite{MR1095340}).
We thus can prove that $S=S'$ by proving that $\CF(\ant(S))(X)=\CF(\ant(S'))(X)$
on the open set of   $X\in \t$ such that $\prod_{\alpha\in \Delta^+}\langle \alpha,X\rangle \neq 0$.
Now, for such $X$, all equivariant volumes occurring can be computed by an (easy) case of the Berline-Vergne localization formula
 for the corresponding possibly non compact
 Hamiltonian spaces with proper moment maps.
We give the formulae in the case where $\langle \lambda,H_\alpha\rangle\neq 0$  and $\langle \mu,H_\alpha\rangle\neq 0 $ for all $\alpha\in \Delta^+$, since they are easier to state.

%
We have  by Rossmann formula  (\cite{BV82}):

$$I_\lambda(X) I\mu(X)$$
$$=\left(\sum_{w_1\in W_K} \epsilon(w_1)\frac{e^{\langle w_1\lambda,X\rangle }}{\prod_{\alpha\in \Delta^+}\langle \alpha, X\rangle }\right)
\left(\sum_{w_2\in W_K}\epsilon(w_2) \frac{e^{\langle w_2\mu,X\rangle }}{\prod_{\alpha\in \Delta^+}\langle\alpha, X\rangle }\right).$$

Thus we obtain:
\begin{equation}\label{twoorbits}
\CF(\ant(S))(X)= \frac{\sum_{w_1,w_2\in W_K\times W_K}\epsilon(w_1)\epsilon(w_2)e^{\langle w_1\lambda+w_2\mu, X\rangle }}{\prod_{\alpha\in \Delta^+}\langle \alpha, X\rangle } .
\end{equation}
We now compute the equivariant volume of $N$, that is the product of the equivariant volumes of the compact $K$-orbits $O_\lambda$, $O_\mu$  and of
 the symplectic space $\p^+$. The list of weights of the action of  $T$ on $\p^+$ is  the list  $\Delta_n^+$.
 The equivariant volume of $\p^+$ is thus given, for $X\in \t$ such that $\prod_{\alpha\in \Delta_n^+} \langle \alpha,X\rangle \neq 0$, by
 $\frac{1}{\prod_{\alpha\in \Delta_n^+} \langle \alpha,X\rangle}$. The  equivariant volumes of the compact $K$-orbits $O_\lambda$, $O_\mu$
 are given by Harish-Chandra formula.
 So we obtain
$$\CF(\ant(S'))(X)= \frac{1}{\prod_{\alpha\in \Delta_n^+} \langle \alpha,X\rangle} \left(
 \frac{\sum_{w_1,w_2\in W_K\times W_K}\epsilon(w_1)\epsilon(w_2)e^{\langle w_1\lambda+w_2\mu, X\rangle }}{\prod_{\alpha\in \Delta_c^+}\langle \alpha, X\rangle }\right)$$
and we conclude that $\CF(\ant(S))(X)=\CF(\ant(S'))(X)$  on the open set of   $X\in \t$ such that $\prod_{\alpha\in \Delta}\langle \alpha,X\rangle \neq 0$,
and so everywhere.
The measure $S$ and $S'$ are thus equal, so their support $P(\lambda,\mu)$ and ${\rm Kir}(O_\lambda\times O_\mu \times \p^+)$  are equal.
\end{proof}

Denote by $\Cone_{\hol}(G)\subset \c_{\geq 0}\oplus \c_{\geq 0}\oplus \c_{\geq 0}$
to be the set of triples  $(A,B,C)$ such that $\CO_{C}^{nc}\subset \CO_{A}^{nc}+\CO_B^{nc}$.

An important corollary of the discussion above is the following proposition
\begin{proposition}(\cite{Hornpq})\label{compare}
The cone $\Cone_{\hol}(G)$ is the cone generated by
the weights $(\lambda,\mu,\nu)$
such that $m_D^{\nu}(\lambda,\mu)>0$.
\end{proposition}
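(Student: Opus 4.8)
The plan is to show that the cone generated by the triples $(\lambda,\mu,\nu)$ with $m_D^{\nu}(\lambda,\mu)>0$ coincides, inside $\c_{\geq 0}\oplus\c_{\geq 0}\oplus\c_{\geq 0}$, with the set $\{(A,B,C):C\in P(A,B)\}$, which by the definition $P(\lambda,\mu)=(\CO^{nc}_\lambda+\CO^{nc}_\mu)\cap\c_{\geq 0}$ is exactly $\Cone_{\hol}(G)$. First I would record that both sides are closed convex cones equal to the closure of their integral points: the support of $m_D$ is stable under addition, because products of highest weight vectors are highest weight vectors; and the right-hand side is a cone since $G\c_{\geq 0}$ is convex (Vinberg) and $P(tA,tB)=tP(A,B)$, and it is closed because the addition map $\CO^{nc}_\lambda\times\CO^{nc}_\mu\to\sqrt{-1}\g^*$ is proper. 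It therefore suffices to compare the two cones at integral parameters, and --- after a standard limiting argument --- only at those integral $(A,B)$ which are regular in the sense that $\langle A,H_\alpha\rangle>0$ and $\langle B,H_\alpha\rangle>0$ for all $\alpha\in\Delta_n^+$.

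At such parameters, Proposition~\ref{sumorbits} identifies $P(A,B)$ with ${\rm Kir}(O_A\times O_B\times\p^+)$, so the question becomes one about the $K$-Hamiltonian space $M=O_A\times O_B\times\p^+$, whose moment map $\phi$ is proper (Section~\ref{orbit}): by definition $m_D^{\nu}(A,B)$ is the multiplicity of $V_\nu$ in $V_A\otimes V_{B}\otimes\Sym(\p_+)$, that is, in the geometric quantization of $M$ in the conventions of Section~\ref{orbit} (which has finite multiplicities by properness of $\phi$). Scaling the symplectic form of $M$ by a positive integer $N$ replaces $O_A,O_B$ by $O_{NA},O_{NB}$ while leaving the quantization of the linear factor $\p^+$ unchanged --- the Fock/polynomial space does not depend on the scale, and $\Phi_+(\p^+)$ is already a cone --- so the quantization of $M$ at level $N$ is $\bigoplus_\nu m_D^{\nu}(NA,NB)\,V_\nu$ and its Kirwan polyhedron is $N\cdot{\rm Kir}(O_A\times O_B\times\p^+)$. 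The crucial input is then the Mumford--Guillemin--Sternberg principle ``the rational Kirwan polyhedron is the asymptotic support of the quantization'' --- classical for K\"ahler spaces with proper moment map, and sharpened by Paradan's $[Q,R]=0$ theorem, the same tool used to obtain Theorem~\ref{decdiscrete} --- which gives
\begin{equation*}
  {\rm Kir}(O_A\times O_B\times\p^+)\ =\ \overline{\ \bigcup_{N\geq 1}\ \frac1N\,\bigl\{\,\nu\in\Lambda\ :\ m_D^{\nu}(NA,NB)>0\,\bigr\}\ }.
\end{equation*}
Equivalently, and closer to the discussion preceding the statement, the Duistermaat--Heckman measure of $M$ --- which that discussion identified as supported on $P(A,B)$ --- is the classical limit, by Riemann--Roch asymptotics, of the multiplicity measures $\sum_\nu m_D^{\nu}(NA,NB)\,\delta_{\nu/N}$, hence has the same support.

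Combining the two identifications: for every regular integral $(A,B)$, the $(A,B)$-slice of the cone generated by $\{(\lambda,\mu,\nu):m_D^{\nu}(\lambda,\mu)>0\}$ is the closure of $\bigcup_{N}\frac1N\{\nu:m_D^{\nu}(NA,NB)>0\}={\rm Kir}(O_A\times O_B\times\p^+)=P(A,B)$, which is the $(A,B)$-slice of $\Cone_{\hol}(G)$; passing to closures over this dense set of $(A,B)$ yields the asserted equality. I expect the main obstacle to be the middle paragraph: making precise the equivalence between non-vanishing of $m_D^{\nu}$ and membership of $\nu$ in the Kirwan polyhedron of the \emph{non-compact} space $O_A\times O_B\times\p^+$ --- quantization-commutes-with-reduction in the proper-moment-map setting --- together with the bookkeeping of the scalings and of the sign conventions relating moment-map images to highest weights. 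The remaining ingredients (convexity, homogeneity, properness, density of regular parameters) are routine given Section~\ref{orbit}.
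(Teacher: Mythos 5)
Your argument is correct and follows essentially the same route as the paper: combine Proposition~\ref{sumorbits} (identifying $P(\lambda,\mu)$ with ${\rm Kir}(O_\lambda\times O_\mu\times \p^+)$) with the Mumford--Guillemin--Sternberg identification of the Kirwan polyhedron of this proper $K$-Hamiltonian space with the asymptotic support of the multiplicities of $V_\lambda\otimes V_\mu\otimes\Sym(\p^+)$. The paper compresses this into two sentences, whereas you spell out the semigroup, closedness, scaling, and density-of-regular-parameters steps that the paper leaves implicit.
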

\begin{proof}
It follows from  theorems on Kirwan polyhedron (\cite{NessMumford84},\cite{GS1982qr}) for representations of compact groups that
$\{(\lambda,\mu,\nu); \nu\in  {\rm Kir}(O_\lambda\times O_\mu\times \p^+)\}$
is the polyhedron generated by the triples  of dominant weights  $(\lambda,\mu,\nu)$
such that  $V_\nu\subset V_\lambda\otimes V_\mu\otimes \Sym(\p^+)$.
\end{proof}

So as it should be, by Proposition \ref{decdiscrete},
the cone  $\Cone_{\hol}(G)$ in $\c_{\geq 0}\oplus \c_{\geq 0}\oplus \c_{\geq 0}$
 generated by  the $(\lambda,\mu,\nu)$ such that $\CO_\nu\subset \CO_\lambda+\CO_\mu$ coincide with the cone generated by
 $(\lambda,\mu,\nu)$  parameters for the holomorphic discrete series with $m_{\hol}^{\nu}(\lambda,\mu)>0$.
 This is in accordance with the general philosophy $[Q,R]=0$ (and conversely is a corollary of $[Q,R]=0$ for holomorphic quantizable  orbits, as proved by P-E-Paradan (\cite{MR3349304})).

\begin{remark}
{\em Consider the $G$-Hamiltonian space $\CO^{nc}_\lambda \times \CO^{nc}_\mu$
and the induced $G$-Hamiltonian space $G\times_K (O_\lambda\times O_\mu\times \p^+)$.
A more satisfying proof of Proposition \ref{sumorbits} would be to prove (in the spirit of Deltour \cite{Deltour}) that these two $G$-Hamiltonian spaces are isomorphic.}
\end{remark}
\section{The quiver representation}\label{quiverrep}

The aim of this section is to relate   tensor products of  holomorphic discrete series for $U(p,q)$
with representations of the quiver $Q_{3,3}$.
We consider the quiver $Q_{3,3}$  described in Fig.\ref{fig-vell}, but now label the vertices as $x_1,x_2,x_3,y_1,y_2,y_3$. A couples $(x_i,y_i)$
will parameterize an object for  $i$-th copy of $U(p,q)$ in $U(p,q)^3$. See \ref {fig-vell}.
\begin{figure}[h]
\hbox to \hsize\bgroup\hss
\beginpicture
\setcoordinatesystem units <1.2in, 0.9in>

\setplotarea x from -2 to 2, y from -.8 to .8
\multiput {\Large$\bullet$} at -1.22 .72  -1.22 -.72  -.5 0  .5 0  1.22 .72  1.22 -.72 /
\setplotsymbol ({\rm .})
\plot -1.22 .72  -.5 0  .5 0  1.22 .72 /
\plot -1.22 -.72  -.5 0 /
\plot  .5 0  1.22 -.72 /
\put{\Large $x_1$} at -1.28 .88
\put{\Large $x_2$} at -1.28 -.88
\put{\Large $x_3$} at -.45 .13
\put{\Large $y_3$} at .45 .13
\put{\Large $y_1$} at 1.28 .88
\put{\Large $y_2$} at 1.28 -.88
\arrow <12pt> [.3,.7] from -.4 0 to -.47 0
\arrow <12pt> [.3,.7] from 1.148 .648 to 1.2 .705
\arrow <12pt> [.3,.7] from 1.148 -.648 to 1.2 -.705
\arrow <12pt> [.3,.7] from -.572 -.072 to -.53 -0.03
\arrow <12pt> [.3,.7] from -.572 .072 to -.53 0.03
\endpicture
\hss\egroup \caption{\label{fig-vell}}
\end{figure}

Given an integer $s$, we label the dominant weights for~$U(s)$ by a
sequence $\lambda$ of $s$ slowly decreasing integers $(\lambda_1\geq\cdots \geq \lambda_s)$.
If $\lambda_s\geq 0$, we simply write $\lambda \geq 0$.
If $\lambda_1\leq 0$, we write $\lambda\leq 0$.
Let $V_\lambda$ be the representation of $U(s)$ with highest weight $\lambda$.
The dual representation $V_\lambda^*$ is indexed by
$\lambda^*=(-\lambda_s\geq\cdots \geq -\lambda_1)$.
Remark that if $\lambda\geq 0$, then $\lambda^*\leq 0.$

We choose $E_{x_i}=\C^p$, $E_{y_i}=\C^q$, and  $\CE=(E_{x_1},E_{x_2},E_{x_3}, E_{y_1},E_{y_2},E_{y_3} )$.
The space  $H(Q_{3,3})$ associated to $\CE$
is $$\oplus_{i=1}^2 \Hom(E_{x_i},E_{x_3})\oplus \Hom(E_{y_3},E_{x_3}) \oplus \oplus_{j=1}^2 \Hom(E_{y_3},E_{y_i}).$$
Then  $\Sym^*(H(Q_{3,3}))$ is a representation space for
 $GL(\CE)=\GL(E_{x_1}) \times  GL(E_{x_2})\times GL(E_{x_3}) \times GL(E_{y_1}) \times GL(E_{y_2})\times GL(E_{y_3})$.

Write its decomposition
$$\Sym ^*(H(Q_{3,3}))$$$$=\oplus _{\alpha_1,\alpha_2,\alpha_3,\beta_1,\beta_2,\beta_3}m_Q(\alpha_1,\alpha_2,\alpha_3,\beta_1,\beta_2,\beta_3) V_{\alpha_1}^{x_1}\otimes
V_{\alpha_2}^{x_2}\otimes V_{\alpha_3}^{x_3}\otimes V_{\beta_1}^{y_1}\otimes V_{\beta_2}^{y_2}\otimes V_{\beta_3}^{y_3}$$
where $\alpha_1,\alpha_2,\alpha_3$ are dominant weights for $U(p)$,
while $\beta_1,\beta_2,\beta_3$ are dominant weights for $U(q)$.

For $\lambda,\mu,\nu$,  a triple of dominant weights of $U(p)\times U(q)$  (so $\lambda$  is a couple $(\alpha,\beta)$  where $\alpha$ is a dominant weight for  $U(p)$ and $\beta$ for $U(q)$, etc..), recall that
$m_D^\nu(\lambda,\mu)$
is the multiplicity of  $V_\nu$ in $V_\lambda\otimes V_{\mu}\otimes \Sym(\p^+)$, where
 $\p^+=\Hom(\C^q,\C^p)$.

\begin{lemma}\label{mQandmD}
If $m_Q(\alpha_1,\alpha_2,\alpha_3,\beta_1,\beta_2,\beta_3)>0$, then $\alpha_1\geq 0$, $\beta_1\leq 0$, $\alpha_2\geq 0$, $\beta_2\leq 0$, and $\alpha_3\leq 0, \beta_3\geq 0$.
Furthermore,
 $$m_Q(\alpha_1,\alpha_2,\alpha_3,\beta_1,\beta_2,\beta_3)= m_D^\nu(\lambda,\mu),$$
where
 $$V_\lambda=V_{\alpha_1}\otimes V_{\beta_1},\hspace{1cm} V_\mu=V_{\alpha_2}\otimes V_{\beta_2},\hspace{1cm}V_\nu=V_{\alpha_3^*}\otimes V_{\beta_3^*}.$$
\end{lemma}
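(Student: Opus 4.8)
The plan is to compute both sides with the Cauchy identity and Littlewood--Richardson duality; the underlying picture is that $Q_{3,3}$ splits, up to the single bridge arrow $y_3\to x_3$, into the Horn quiver $x_1\to x_3\leftarrow x_2$ and the reversed Horn quiver $y_1\leftarrow y_3\to y_2$, with $\Sym(\p^+)$ realizing the gluing of the two halves. First I would decompose $\Sym^*(H(Q_{3,3}))$ as a $GL_Q(\CE)$-module. Every vertex of $Q_{3,3}$ is either a pure source ($x_1,x_2,y_3$) or a pure sink ($x_3,y_1,y_2$), and for an arrow $a\colon u\to v$ the ring of polynomial functions on $\Hom(E_u,E_v)=E_u^*\otimes E_v$ is, by Cauchy's formula, $\Sym(E_u\otimes E_v^*)=\bigoplus_{\kappa}S_\kappa(E_u)\otimes S_\kappa(E_v)^*$, with $\kappa$ running over partitions with at most $\min(\dim E_u,\dim E_v)$ parts; here $S_\kappa(E_u)=V_\kappa$ has highest weight the partition $\kappa\ge 0$ and $S_\kappa(E_v)^*=V_{\kappa^*}$ has highest weight $\kappa^*\le 0$. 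Multiplying the five such expansions and collecting Schur factors vertex by vertex, an isotypic component of $\Sym^*(H(Q_{3,3}))$ is labelled by partitions $\sigma_1$ (from $x_1\to x_3$), $\sigma_2$ (from $x_2\to x_3$), $\tau$ (from $y_3\to x_3$), $\rho_1$ (from $y_3\to y_1$), $\rho_2$ (from $y_3\to y_2$), and its factor is $V_{\sigma_1}$ at $x_1$, $V_{\sigma_2}$ at $x_2$, $V_{\rho_1^*}$ at $y_1$, $V_{\rho_2^*}$ at $y_2$, a summand of $V_{\sigma_1}^*\otimes V_{\sigma_2}^*\otimes V_\tau^*$ at $x_3$, and a summand of $V_\tau\otimes V_{\rho_1}\otimes V_{\rho_2}$ at $y_3$. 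Reading off the highest weights forces $\alpha_1=\sigma_1\ge 0$, $\alpha_2=\sigma_2\ge 0$, $\beta_1=\rho_1^*\le 0$, $\beta_2=\rho_2^*\le 0$, $\alpha_3\le 0$ (dual of a partition) and $\beta_3\ge 0$ (a partition), which is the first assertion; it also pins down $\sigma_1,\sigma_2,\rho_1,\rho_2$ as $\alpha_1,\alpha_2,\beta_1^*,\beta_2^*$, so $\tau$ is the only free summation variable.

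Next I would extract the multiplicity of $V_{\alpha_1}^{x_1}\otimes\dots\otimes V_{\beta_3}^{y_3}$: only the sink $x_3$ (group $GL(p)$) and the source $y_3$ (group $GL(q)$) carry a nontrivial multiplicity, so, writing $[\,V:W\,]$ for the multiplicity of $V$ in $W$,
\begin{align*}
  m_Q(\alpha_1,\alpha_2,\alpha_3,\beta_1,\beta_2,\beta_3)
  &=\sum_{\tau}\,[\,V_{\alpha_3^*}:V_{\alpha_1}\otimes V_{\alpha_2}\otimes V_\tau\,]_{GL(p)}\;[\,V_{\beta_3}:V_\tau\otimes V_{\beta_1^*}\otimes V_{\beta_2^*}\,]_{GL(q)},\\
  m_D^\nu(\lambda,\mu)
  &=\sum_{\tau}\,[\,V_{\alpha_3^*}:V_{\alpha_1}\otimes V_{\alpha_2}\otimes V_\tau\,]_{GL(p)}\;[\,V_{\beta_3^*}:V_{\beta_1}\otimes V_{\beta_2}\otimes V_{\tau^*}\,]_{GL(q)},
\end{align*}
the first line coming from the decomposition above (one rewrites the multiplicity at $x_3$ as an ordinary tensor-product multiplicity by dualizing), and the second by substituting $V_\lambda=V_{\alpha_1}\otimes V_{\beta_1}$, $V_\mu=V_{\alpha_2}\otimes V_{\beta_2}$, $V_\nu=V_{\alpha_3^*}\otimes V_{\beta_3^*}$ into the definition of $m_D^\nu$ and using the Cauchy decomposition $\Sym(\p^+)=\Sym(\C^p\otimes(\C^q)^*)=\bigoplus_{\tau}V_\tau\otimes V_{\tau^*}$ of $\Sym(\p^+)$ as a $K=U(p)\times U(q)$-module (first factor a $U(p)$-module, second a $U(q)$-module). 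In both sums $\tau$ ranges over the same set, the partitions with at most $\min(p,q)$ parts.

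Finally I would match the two sums term by term: the $GL(p)$-factors are identical, and the $GL(q)$-factors agree because dualizing an intertwiner gives $[\,V_{\beta_3^*}:V_{\beta_1}\otimes V_{\beta_2}\otimes V_{\tau^*}\,]=[\,V_{\beta_3}:V_{\beta_1^*}\otimes V_{\beta_2^*}\otimes V_\tau\,]$, hence $m_Q=m_D^\nu(\lambda,\mu)$. I expect the only real work to be bookkeeping --- keeping straight which vertex carries a polynomial and which a dual-polynomial Schur factor, and checking that the Cauchy label $\tau$ of the bridge arrow $y_3\to x_3$ ranges over exactly the partitions appearing in $\Sym(\p^+)$, so that the two ``convolutions over $\tau$'' match --- rather than any conceptual obstacle: the lemma is in essence a repackaging of the Cauchy and Littlewood--Richardson identities.
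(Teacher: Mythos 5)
Your proof is correct and follows essentially the same route as the paper: Cauchy's formula on the four outer arrows fixes the signs of $\alpha_1,\alpha_2,\beta_1,\beta_2$ and reduces everything to the middle $U(p)\times U(q)$-module at the vertices $x_3,y_3$, which is then identified with $\bigl(V_\lambda\otimes V_\mu\otimes\Sym(\p^+)\bigr)^*$. The only difference is cosmetic: the paper recognizes this dual directly without expanding the bridge arrow, so your convolution over $\tau$ and term-by-term matching of Littlewood--Richardson multiplicities is extra (but harmless) bookkeeping.
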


    \begin{proof}
 \end{proof}
We give the proof in the case where $p\leq q$.
We write
$$\Sym^*(\CH_{Q_{3,3}}) =A\otimes B\otimes C$$  with
$$A=\Sym^*(\Hom(E_{x_1},E_{x_3})) \otimes    \Sym^*(\Hom(E_{x_2},E_{x_3})),$$
$$B= \Sym^*(\Hom(E_{y_3},E_{x_3})),$$
$$C= \Sym^*(\Hom(E_{y_3},E_{y_1}))\otimes \Sym^*( \Hom(E_{y_3},E_{y_2})).$$

We recall the Cauchy formula : \begin{lemma}
  Let $N, n$ be positive integers, and assume that $N\geq n$.
  The decomposition of $\Sym(\C^{n}\otimes \C^N)$ with respect to the natural action of $U(n)\times U(N)$ is given by the \emph{Cauchy formula}
  \begin{equation}\label{exa:Cauchy formula}
    \Sym(\C^{n}\otimes \C^N)=\bigoplus_{\nu\geq 0 }V_{\nu}^{U(n)}\otimes V_{\tilde\nu}^{U(N)}.
  \end{equation}
Here $ \tilde\nu$  is the sequence $\nu $  to which we add on the right $N-n$ zeros.

\end{lemma}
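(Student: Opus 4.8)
The plan is to prove the identity at the level of characters for the maximal torus $T_n\times T_N$ of $U(n)\times U(N)$ and then invoke the linear independence of irreducible characters. First I would diagonalize the torus action: writing a torus element as $(\operatorname{diag}(x_1,\dots,x_n),\operatorname{diag}(y_1,\dots,y_N))$, the space $\C^n\otimes\C^N$ splits into the $nN$ weight lines $\C(e_i\otimes f_j)$ of weight $x_iy_j$. Since the character of the symmetric algebra of a torus module is the product over its weights $w$ of $(1-w)^{-1}$, the character of $\Sym(\C^n\otimes\C^N)$ is
\[
  \prod_{i=1}^{n}\prod_{j=1}^{N}\frac{1}{1-x_iy_j}.
\]

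Next I would invoke the classical Cauchy identity for Schur polynomials,
\[
  \prod_{i,j}\frac{1}{1-x_iy_j}=\sum_{\nu}s_\nu(x_1,\dots,x_n)\,s_\nu(y_1,\dots,y_N),
\]
the sum running over all partitions $\nu$. Because $s_\nu(x_1,\dots,x_n)$ vanishes once $\nu$ has more than $n$ parts, only partitions with $\ell(\nu)\leq n$ contribute; as $n\leq N$ these also satisfy $\ell(\nu)\leq N$, so the corresponding factor on the $y$-side is nonzero. I would then identify $s_\nu(x_1,\dots,x_n)$ with the character of the irreducible polynomial representation $V_\nu^{U(n)}$ (Weyl character formula) and $s_\nu(y_1,\dots,y_N)$ with the character of $V_{\tilde\nu}^{U(N)}$, the padding by $N-n$ zeros being precisely the passage from a length-$n$ partition $\nu$ to a length-$N$ dominant weight $\tilde\nu$. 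Equating the two character expressions and using linear independence of irreducible characters yields the claimed decomposition, each isotypic piece occurring with multiplicity one.

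The only genuine content beyond bookkeeping is the Cauchy identity itself. If it is to be proved rather than cited, the cleanest self-contained route is the Robinson--Schensted--Knuth correspondence: a monomial basis of $\Sym^d(\C^n\otimes\C^N)$ is indexed by $n\times N$ matrices of nonnegative integers with entry sum $d$, and RSK is a weight-preserving bijection between such matrices and pairs of semistandard Young tableaux of a common shape $\nu$, with entries in $\{1,\dots,n\}$ and $\{1,\dots,N\}$ respectively; summing the monomials over each factor reproduces $s_\nu(x)$ and $s_\nu(y)$, hence $\sum_\nu s_\nu(x)s_\nu(y)$. Thus the main (and essentially only) obstacle is the Cauchy identity, which I would either cite as classical or establish by the RSK bijection just described; the surrounding steps are routine character manipulations.
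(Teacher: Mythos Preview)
Your proof is correct and is in fact the standard argument for this classical result. The paper, however, does not prove this lemma at all: it is introduced with the phrase ``We recall the Cauchy formula'' and stated without proof, then immediately applied. So there is nothing to compare against; you have supplied a complete and correct argument where the paper simply cites the result as known.
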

 Using the identification $$\Hom(V,W) =V^*\otimes W$$  and looking at the decomposition of $A$, $C$, above,  we obtain
 $$\Sym^*(\CH_{Q_{3,3}})$$

$$\sum_{\alpha_1\geq 0,\alpha_2\geq 0, \beta_1\leq 0, \beta_2\leq 0}
V^{x_1}_{\alpha_1}\otimes V^{x_2}_{\alpha_2}\otimes {\rm Middle}_{\alpha_1,\alpha_2,\beta_1,\beta_2} \otimes V^{y_1}_{\beta_1}\otimes V^{y_2}_{\beta_2}$$
where   ${\rm Middle}_{\alpha_1,\alpha_2,\beta_1,\beta_2}$ is the representation of $U(E_{x_3})\times U(E_{y_3})$
equal to $$V^{x_3}_{\alpha_1^*}\otimes V^{x_3}_{\alpha_2^*}\otimes \Sym^*(\Hom(E_{y_3},E_{x_3}))\otimes  V^{y_3}_{\beta_1^*}\otimes V^{y_3}_{\beta_2^*}.$$
 So
$${\rm Middle}_{\alpha_1,\alpha_2,\beta_1,\beta_2}=V_{\lambda}^*\otimes V_\mu^* \otimes \Sym^*(\Hom(E_{y_3},E_{x_3}))$$
$$=(V_{\lambda}\otimes V_\mu \otimes \Sym(\Hom(E_{y_3},E_{x_3}))^*,$$
with $$V_\lambda=V_{\alpha_1}\otimes V_{\beta_1},\hspace{1cm} V_\mu=V_{\alpha_2}\otimes V_{\beta_2}.$$
By definition of $m_D$, this is
$$\sum_{\nu} m_D^{\nu}(\lambda,\mu) V_\nu^*=\sum_{\nu} m_D(\nu^*,\lambda,\mu) V_\nu$$

Replacing ${\rm Middle}_{\alpha_1,\alpha_2,\beta_1,\beta_2}$ by this last expression, we obtain our formula.

\section{ Cone inequalities  and comparison with P.E. Paradan's  list}\label{coneeq}

Let $G=U(p,q)$ and $n=p+q$.
Let $K=U(p)\times U(q)$ be the maximal compact subgroup of $G$.

Let $\h$ be the Cartan subalgebra of $\Lie(G)$ consisting of diagonal matrices with imaginary entries.
An element  $\xi\in (\sqrt{-1}\h)^*$ is written as $\sum_{i=1}^{p+q} \xi(i)\epsilon^i$  with $\xi(i)$  reals, using the notations of Example   \ref{exUpq}.
Let $$\c_{\geq 0}=\{\xi;   \xi(1)\geq \cdots \geq \xi(p)\geq \xi(p+1)\geq \cdots \geq \xi(p+q)\}.$$
 Recall that the cone $\Cone_{\hol}(p,q)$ consists of the triples  $(A,B,C)$ such that $\CO_C^{nc}\subset \CO_A^{nc}+\CO_B^{nc}$.
 Equivalently (following \cite{Hornpq} and Proposition \ref{compare}), it is the cone generated by the  weights $\lambda,\mu,\nu$ in
$\c_{\geq 0}\oplus \c_{\geq 0}\oplus \c_{\geq 0}$ such that
$m_D^{\nu}(\lambda,\mu)>0$.

Let  $\CJ=[J_1,J_2,\ldots, J_6]$ be  a collection of subsets of integers, with  $J_1,J_2,J_3 \subseteq [p]$ and $J_4,J_5,J_6 \subseteq [q]$.
Consider a triple $(A,B,C)$ in $(\sqrt{-1}\h)^*\oplus (\sqrt{-1}\h)^*\oplus (\sqrt{-1}\h)^*$.
We define  the linear form $e(\CJ)$ by
$$e(\CJ)(A,B,C)$$
$$=\sum_{i\in J_1} A(i)+\sum_{i\in J_2} B(i)-\sum_{i\in J_3} C(p+1-i)
-\sum_{i\in J_4} C(p+q-i+1)+
\sum_{i\in J_5} A(p+i)+\sum_{i\in J_6} B(p+i).$$

We now consider the quiver $Q_{3,3}$   and index its vertices by $\{1,2,3,4,5,6\}$.
A family of objects indexed  by $Q_0$ is written as a list in the order $1\to 6$.
So an element $[J_1,J_2,J_3,J_4,J_5,J_6]\in \Int_0(\CA,p,q)$ is a sequence of sets such that $J_1,J_2,J_3 \subseteq [p]$ and $J_4,J_5,J_6 \subseteq [q]$ with
$|J_1|=|J_2|=|J_3|$ and  $|J_4|=|J_5|=|J_6|$.

\begin{proposition}
The cone $C_{\hol}(p,q)\subset  \c_{\geq 0}\oplus \c_{\geq 0}\oplus \c_{\geq 0}$
is the cone defined by the equation
$$\sum_{i=1}^{p+q} A(i)+\sum_{i=1}^{p+q} B(i)=\sum_{i=1}^{p+q} C(i)$$
and the inequalities $e(\CJ)(A,B,C)\leq 0$
for all $\CJ\in \Int_0(\CA,p,q)$.
\end{proposition}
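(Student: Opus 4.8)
The plan is to derive this statement as a direct translation of the description of the moment cone $\Cone_Q(\CE)$ for the quiver $Q=Q_{3,3}$ with dimension vector $[p,p,p,q,q,q]$, using the dictionary established in Lemma \ref{mQandmD} (equivalently Proposition \ref{parqui}) together with Lemma \ref{equaldim} and Proposition \ref{compare}. First I would recall that, by Proposition \ref{compare}, the cone $\Cone_{\hol}(p,q)$ is exactly the cone generated by triples $(\lambda,\mu,\nu)$ with $m_D^\nu(\lambda,\mu)>0$, and by Lemma \ref{mQandmD} such a triple corresponds to a dominant weight $\bm\lambda=[\alpha_1,\alpha_2,\alpha_3,\beta_1,\beta_2,\beta_3]$ for $\GL_Q(\CE)$ with $m_Q(\bm\lambda)>0$, under the substitutions $A=(\alpha_1,\beta_1^{\mathrm{shift}})$ — more precisely, $A(i)=\alpha_1(i)$ for $i\le p$ and $A(p+i)=-\beta_1^*(\ldots)=\beta_1(i)$, and similarly $B$ from $(\alpha_2,\beta_2)$, while $C$ comes from the duals: $C(p+1-i)=-\alpha_3(i)=\alpha_3^*(\ldots)$ and $C(p+q+1-i)=\beta_3^*(\ldots)$. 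The content is then just bookkeeping: to match the linear functional $E(\CJ)(\bm\lambda)=\sum_{x}\sum_{j\in J_x}\lambda_x(j)$ with the functional $e(\CJ)(A,B,C)$ written in the statement, under this change of variables.

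The key steps, in order, are: (1) invoke Theorem \ref{thm:moment cone and rep theory summary} to get that $\Cone_Q(\CE)$ is cut out by the center equation $E(\CK_{\full})=0$ and the inequalities $E(\CJ)\le 0$ for $\CJ\in\Int_0(Q,\CE)$; (2) invoke Lemma \ref{equaldim} to reduce the defining inequalities to those coming from $\CJ\in\Int(\CA,p,q)$ together with the four \textbf{Extremal} inequalities $\lambda_1(p)\ge 0$, $\lambda_2(p)\ge 0$, $\lambda_5(1)\le 0$, $\lambda_6(1)\le 0$; (3) observe that under the dictionary of Lemma \ref{mQandmD}, the four Extremal inequalities become precisely the conditions $A,B\in\c_{\ge 0}$ and $C\in\c_{\ge 0}$ that express slow decrease across the $(p,q)$-gap (i.e., they are built into the assumption $(A,B,C)\in\c_{\ge 0}\oplus\c_{\ge 0}\oplus\c_{\ge 0}$), and so need not be listed; (4) check that the center equation $\sum_x\sum_j\lambda_x(j)=0$ translates into $\sum_i A(i)+\sum_i B(i)=\sum_i C(i)$ — here the dualizations at vertices $3$ and $4$ are what produce the sign flip turning "$=0$" into "$\mathrm{LHS}=\mathrm{RHS}$"; (5) finally, for a general $\CJ=[J_1,\dots,J_6]\in\Int_0(\CA,p,q)$, expand $E(\CJ)(\bm\lambda)=\sum_{i\in J_1}\alpha_1(i)+\sum_{i\in J_2}\alpha_2(i)+\sum_{i\in J_3}\alpha_3(i)+\sum_{i\in J_4}\beta_3(i)+\sum_{i\in J_5}\beta_1(i)+\sum_{i\in J_6}\beta_2(i)$ and substitute $\alpha_3(i)=-C(p+1-i)$, $\beta_3(i)=-C(p+q+1-i)$, $\alpha_1(i)=A(i)$, $\alpha_2(i)=B(i)$, $\beta_1(i)=A(p+i)$, $\beta_2(i)=B(p+i)$ to recover exactly the displayed formula for $e(\CJ)(A,B,C)$. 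Since $\CA=\{1\to3,2\to3,4\to5,4\to6\}$, the condition $|J_1|=|J_2|=|J_3|$ and $|J_4|=|J_5|=|J_6|$ built into $\Int_0(\CA,p,q)$ matches the shape needed for the dualizations at vertices $3,4$ to make sense dimensionally, i.e., it is exactly the condition under which the quiver weight corresponds to an honest triple of $U(p,q)$ holomorphic parameters.

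The main obstacle — really the only non-bookkeeping point — is step (3): one must be careful that passing from the description of $\Cone_Q(\CE)$ as a subset of $\oplus_x C_x$ to the description of $\Cone_{\hol}(p,q)$ as a subset of $\c_{\ge 0}^{\oplus 3}$ for $U(p,q)$ is faithful, i.e., that the \textbf{Extremal} inequalities and the dominance conditions at the remaining vertices together are equivalent to the single condition $A,B,C\in\c_{\ge 0}$ (slowly decreasing across the gap) plus the assertion that $A,B$ parametrize honest orbits while $C$ does too. Concretely: $\lambda_1(p)\ge0$ and $\beta_1=\lambda_5\le0$ together say exactly that the concatenation $A=(\alpha_1,\beta_1)$ is slowly decreasing with the gap at position $p$; symmetrically for $B$; and for $C=(\alpha_3^*,\beta_3^*)$ the dominance of $\alpha_3,\beta_3$ gives slow decrease within each block while $\alpha_3\le0,\beta_3\ge0$ (automatic consequences of $m_Q>0$, again by Lemma \ref{mQandmD}) give the gap. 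Once this identification of the ambient cones is spelled out, the equivalence of the inequality systems is immediate from Lemma \ref{equaldim} and the linear-algebra identity of step (5), and we are done. I would also remark that, as noted after Theorem \ref{thm:moment cone and rep theory summary}, the resulting list $\{e(\CJ)\le0 : \CJ\in\Int_0(\CA,p,q)\}$ is typically redundant, and that Proposition \ref{symetry} shows it carries a $\Sigma_3\times\Sigma_3$ symmetry, which in the $U(p,q)$ picture is the symmetry exchanging the roles of $A$, $B$, $C^{\vee}$ that is invisible in the original orbit problem.
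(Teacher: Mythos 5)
Your bookkeeping in steps (1), (2), (4) and (5) is correct and matches the paper's dictionary $(A,B,C)\mapsto {\bf a}=[a_1,\dots,a_6]$, but step (3) contains a genuine error that breaks the argument. The four \textbf{Extremal} inequalities translate, under the dictionary, into $A(p)=\lambda_1(p)\geq 0$, $A(p+1)=\lambda_5(1)\leq 0$ and likewise for $B$; that is, they force the value $0$ to sit \emph{inside} the gap. This is strictly stronger than the condition $A,B\in\c_{\geq 0}$, which only requires $A(p)\geq A(p+1)$ with no constraint on where $0$ falls. So your claim that ``$\lambda_1(p)\ge0$ and $\lambda_5(1)\le0$ together say exactly that the concatenation is slowly decreasing'' is false, and the Extremal inequalities are \emph{not} built into the hypothesis $(A,B,C)\in\c_{\geq0}^{\oplus 3}$. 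Concretely, take $p=q=1$ and $A=(5,3)$: this is a perfectly good element of $\c_{\geq 0}$, yet the associated $a_5=(3)$ violates $a_5\leq 0$, so the naive image ${\bf a}$ of a point satisfying all your inequalities need not lie in $\Cone_Q(\CE)$, and you cannot conclude membership in $\Cone_{\hol}(p,q)$ from Lemma \ref{equaldim}.

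The missing idea — which is the one non-trivial point of the paper's proof — is the normalization by the two-parameter group of translations $(A,B,C)\mapsto (A+r_{t_1},B+r_{t_2},C-r_{t_1}-r_{t_2})$, where $r_t=t\sum_k\epsilon^k$ comes from the central element $t\,\Id_n$. One must check two invariances: the cone $\Cone_{\hol}(p,q)$ is preserved because $r_t$ is fixed by the coadjoint action, so $\CO^{nc}_{A+r_t}=\CO^{nc}_A+r_t$; and each linear form $e(\CJ)$ with $\CJ\in\Int_0(\CA,p,q)$ is preserved precisely because $|J_1|=|J_2|=|J_3|$ and $|J_4|=|J_5|=|J_6|$ (this is the real reason the restriction to $\Int_0(\CA,p,q)$ matters here, beyond the ``dimensional sense'' you invoke). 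After choosing $t_1,t_2$ so that $A(p)\geq 0\geq A(p+1)$ and $B(p)\geq 0\geq B(p+1)$, the Extremal inequalities hold automatically, ${\bf a}$ lands in the Weyl chamber of $U(p)^3\times U(q)^3$ with $a_1,a_2\geq 0$ and $a_5,a_6\leq 0$, and your identification $e(\CJ)(A,B,C)=E(\CJ)({\bf a})$ together with Lemma \ref{equaldim} and Proposition \ref{parqui} finishes the argument exactly as you intend. With that insertion your proof coincides with the paper's; without it, the passage from the quiver cone to $\Cone_{\hol}(p,q)$ is not faithful.
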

\begin{proof}\end{proof}
Let $(A,B,C)$ in $\c_{\geq 0}\oplus \c_{\geq 0}\oplus \c_{\geq 0}$.
Let $t\in \R$ be a real number.
Consider  $t\Id_n=t\,
(\sum_{k=1}^n h_k)$ in $\sqrt{-1}\Lie(G)$. It is in the center of  $\Lie(G)_\C$.
Thus $r_t=\sum_{k=1}^n t\epsilon^k$ is invariant by the coadjoint action of $U(p,q)$.
Thus we see that the translations  $$(A,B,C)\to (A+r_{t_1},B+r_{t_2},C-r_{t_1}-r_{t_2})$$
 leaves the cone $C_{\hol}(p,q)$ invariant, for any $(t_1,t_2)\in \R^2$.

Since the sets $J_1,J_2,J_3$  as well as the sets $J_4,J_5,J_6$
have same cardinality, the linear form $e(\CJ)$ is invariant by the same two parameter group of translations.
We can find $t_1,t_2$ such that $A=(A(1)\geq \cdots \geq A(p)\geq 0\geq A(p+1)\geq \cdots \geq A(p+q)$
and
$B=(B(1)\geq \cdots\geq B(p)\geq 0\geq B(p+1)\geq \cdots \geq B(p+q)$.
Define \begin{equation} \label{transform}
\begin{array} {l} a_1=(A(1),\ldots, A(p) ),\\
 a_5= (A(p+1),\ldots,  A(p+q)),\\
a_2=(B(1),\ldots,   B(p) ),\\
a_6= (B(p+1), \ldots,  B(p+q)), \\
a_3=(-C(p),\ldots, -C(1)), \\
 a_4=(-C(p+q),\ldots,-C(p+1)).\\
\end{array}
\end{equation}

 Then ${\bf a}=[a_1,a_2,a_3,a_4,a_5,a_6]$ is in the Weyl chamber for the group $U_Q(\CE)=U(p)^3\times
  U(q)^3$.

Furthermore, by our choice,  $a_1,a_2\geq 0$ and $a_5,a_6\leq 0$. It is easy to see that $e(\CJ)(A,B,C)=E(\CJ)({\bf a})$.
Thus if $(A,B,C)$ satisfy the equations $e(\CJ)\leq 0$, the element ${\bf a}$ is in the cone $\Cone_Q(\CE)$ since the four inequalities in {\bf Extremal} are satisfied by our choice and the other inequalities  describing $\Cone_Q(\CE)$ are $E(\CJ)\leq 0$ for  $\CJ\in \Int_0(\CA,p,q)$
    (Lemma  \ref{equaldim}).
Assume that $(A,B,C)$ are with integral coefficients, then $m_Q(\bf a)>0$,
and so $m_D^{\nu}(\lambda,\mu)>0$ with $V_\lambda=V_{a_1}\otimes V_{a_5}$, $V_\mu=V_{a_2}\otimes V_{a_6}$
$V_\nu=V_{a_3}^*\otimes V_{a_4}^*$,
considered as irreducible representations of $K=U(p)\times
 U(q)$.
From Proposition \ref{parqui}, this in turn implies that $(A,B,C)\in \Cone_{\hol}(p,q)$.
This proves the proposition.

\bigskip

We now give the example of $\Cone_Q(\CE)$ for $Q=Q_{3,3}$ and dimension vector $[2,2,2,2,2,2]$.
The  inequalities corresponding to the elements in $\Int_0(\CA,2,2)$  are thus
\begin{equation}\label{eqconeeq}
\begin{array} {l} a_1(1)+a_2(2)+a_3(2) \leq 0, \\
 a_1(2)+a_2(1)+a_3(2) \leq 0, \\
 a_1(2)+a_2(2)+a_3(1)\leq 0, \\
a_1(1)+a_1(2)+a_2(1)+a_2(2)+a_3(1)+a_3(2) \leq 0, \\
a_1(2)+a_2(2)+a_3(2)+a_4(1)+a_5(2)+a_6(2)\leq 0, \\
a_1(2)+a_2(2)+a_3(2)+a_4(2)+a_5(1)+a_6(2)\leq 0,\\
a_1(2)+a_2(2)+a_3(2)+a_4(2)+a_5(2)+a_6(1) \leq 0,\\
a_1(1)+a_2(2)+a_3(2)+a_4(2)+a_5(2)+a_6(2) \leq 0, \\
a_1(2)+a_2(1)+a_3(2)+a_4(2)+a_5(2)+a_6(2) \leq 0, \\
a_1(2)+a_2(2)+a_3(1)+a_4(2)+a_5(2)+a_6(2) \leq 0, \\
 a_1(1)+a_1(2)+a_2(1)+a_2(2)+a_3(1)+a_3(2)+a_4(1)+a_5(2)+a_6(2) \leq 0, \\
a_1(1)+a_1(2)+a_2(1)+a_2(2)+a_3(1)+a_3(2)+a_4(2)+a_5(1)+a_6(2) \leq 0, \\
a_1(1)+a_1(2)+a_2(1)+a_2(2)+a_3(1)+a_3(2)+a_4(2)+a_5(2)+a_6(1)\leq 0.\\
\end{array}
\end{equation}
 It turns out that all the inequalities are essential.

Using the transformation  defined in (\ref{transform})  $(A,B,C)\to (a_1,a_2,a_3,a_4,a_5,a_6)$, we reobtain that the inequalities of $\Cone_{\hol}(2,2)$ are the ones obtained in \cite{Hornpq} and that we list here:
\begin{eqnarray*}{}
 A_{1}+A_{2}+A_{3}+A_{4}+B_{1}+B_{2}+B_{3}+B_{4} = C_{1}+C_{2}+C_{3}+C_{4},  \nonumber  \\
 A_{1}+A_{2}+B_{1}+B_{2}\le C_{1}+C_{2},     \\
A_{2}+B_{2}\le C_{2}, \ A_{2}+B_{1}\le C_{1} , \ A_{1}+B_{2}\le C_{1},\\
A_{3}+B_{3}\ge C_{3} , \ A_{3}+B_{4}\ge C_{4}, \ A_{4}+B_{3}\ge C_3, \\
A_{2}+A_{4}+B_{2}+B_{4}\le C_{1}+C_{4}, \ \
A_{2}+A_{4}+B_{2}+B_{4}\le C_{2}+C_{3}, \\
 A_{2}+A_{4}+B_{1}+B_{4}\le C_{1}+C_{3}, \ \
A_{1}+A_{4}+B_{2}+B_{4}\le C_{1}+C_{3},\\
A_{2}+A_{4}+B_{2}+B_{3}\le C_{1}+C_{3},\ \
A_{2}+A_{3}+B_{2}+B_{4}\le C_{1}+C_{3}.\\
\end{eqnarray*}

\section {Some more examples}\label{examples}

We give some more examples of $\Horn_0(\CA,p,q)$ which in turn determines the inequalities of the cone $\Cone_Q(\CE)$
or equivalently the inequalities  of the cone $\Cone_{\hol}(p,q)$.
To obtain the full set $\Horn_0(p,q)$, we need to add $\CK_{\full}$  (giving an equality)  and the $4$ additional sets
$\CK_{\max}^{(6)}$,
$\CK_{\max}^{(5)}$,
$\CK_{\min}^{(1)}$,
$\CK_{\min}^{(2)}$ corresponding to the extremal vertices $1,2,5,6$.

Here are  some facts allowing to compute by ``hand"  the examples below.

Recall (Proposition \ref{symetry}) that the set  $\Horn_0(\CA,p,q)$ is invariant by $\Sigma_3\times \Sigma_3$.

Furthermore, recall Remark \ref{subquiver}.
 Let $\CB$ be the set of arrows $1\to 3,2\to 3$ of the quiver $\CH_2$.
 Consider $P_1$ the subquiver of $Q_{3,3}$ with vertices $\{1,2,3\}$, and $P_2$  with vertices $\{4,5,6\}$   described in Fig.\ref{fig-vell2}}.
 So $P_1$ is the quiver  $\CH_2$ and $P_2$ is isomorphic to  $\CH_2$ with opposite orientations.
  If $\CK=[K_1,K_2,K_3,K_4,K_5,K_6]$ are in $\Horn_0(\CA, p,q)$,
and $|K_1|=|K_2|=|K_3|=r_1$,  $|K_4|=|K_5|=|K_6|=r_2$, then  necessarily $\CK_{\lleft}=[K_1,K_2,K_3]$
 is $P_1$-intersecting in $\CE_{P_1}$, and  $\CK_{\rright}=[K_4,K_5,K_6]$ is $P_2$-intersecting in $\CE_{P_2}$.
This means that  the  Schubert varieties determined by $K_1,K_2,K_3$  are intersecting in $\Gr(r_1,p)$, and the Schubert varieties
determined by $K_4,K_5,K_6$ are intersecting in  $\Gr(r_2,q)$.

To help the reader checking our lists, we  list the elements  $\CJ=[J_1,J_2,J_3]\in \Int(\CB,\CH_2, [2,2,2])$
and $\Int(\CB,\CH_2, [3,3,3])$,
 for the quiver $\CH_2$  with $|J_1|=|J_2|=|J_3|=r$
 together with their $\dim=\edim_{Q,B}$ which computes the dimension of the homological intersection of the corresponding Schubert varieties $\Omega_1,\Omega_2,\Omega_3$.

The set  $\Int(\CB,\CH_2,[2,2,2])$ is invariant by permutations of the vertices $1,2,3$ .
So we list representatives grouping  them by the cardinality  $r$ of the sets $J_i$.

$r=1$
$$[\{1\}, \{2\}, \{2\}], \dim=0 $$

$r=2$

$$[\{1,2\}, \{1,2\}, \{1,2\}], \dim=1 $$

The set  $\Int(\CB,\CH_2,[3,3,3])$ is invariant by permutations of the vertices $1,2,3$.
So we list representatives grouping  them by the cardinality  $r$ of the sets $J_i$.
Since the Grassmannians $\Gr(r,n)$ is isomorphic to $\Gr(n-r,r)$,
the list of $\CH_2$-intersecting sets for $r=2$ is immediately deduced from the  list for $r=1$, which is evident
since we are in the projective space.

$r=1$ representatives:

$$[\{1\}, \{3\}, \{3\}], \dim=0,$$
$$[\{2\}, \{2\}, \{3\}], \dim=0,$$
$$ [\{2\}, \{3\}, \{3\}], \dim=1,$$
$$[\{3\}, \{3\}, \{3\}], \dim=2.$$

$r=2$ representatives:

$$[\{1, 2\}, \{2, 3\}, \{2, 3\}], \dim=0, $$
$$[\{1, 3\},  \{1, 3\},  \{2, 3\}], \dim=0,$$
$$ [\{1, 3\}, \{2, 3\}, \{2, 3\}], \dim=1, $$
$$ [\{2, 3\}, \{2, 3\}, \{2, 3\}], \dim=2.$$

We also relate the numerical quantity $\edim_{Q,B}$ for $Q=Q_{3,3}$ to the  $\edim_{Q,B}$ of the subquivers  $P_i$ by the following lemma:
\begin{lemma}
Let $\CE=[\C^p,\C^p,\C^p,\C^q,\C^q,\C^q]$ and  let $\CK\subseteq \CE$.
Then
\begin{align}\label{Q1Q2dim}
\begin{split}\edim_{Q_{3,3},B}(\CK, \CE)=\qquad \qquad \qquad\qquad\qquad\\
 \edim_{P_1,B}(\CK_{\lleft},[p,p,p]) +\edim_{P_2,B}(\CK_{\rright}, [q,q,q] )-|K_{4}| (p -|K_3|).
\end{split}\end{align}

\end{lemma}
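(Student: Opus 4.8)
The plan is to unwind the definition of $\edim_{Q,B}$ for each of the three quivers $Q_{3,3}$, $P_1$, $P_2$ and to observe that everything cancels except the contribution of the single arrow joining the two subquivers. Recall that if $\bm\Omega=(\Omega_x)_{x\in Q_0}$ is the Schubert variety determined by $\CK$, then $\dim\bm\Omega=\sum_{x\in Q_0}\dim\Omega_x$ with $\Omega_x\subseteq\Gr(|K_x|,E_x)$, so that $\dim\Omega_x$ depends only on $K_x$ and on $n_x=\dim E_x$ and not on the ambient quiver; and that $\edim_{Q,B}(\CK,\CE)=\dim\bm\Omega-\sum_{a:x\to y\in Q_1}|K_x|\,(n_y-|K_y|)$.

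First I would split the vertex set and the arrow set of $Q_{3,3}$ along the two subquivers. The vertices split as $\{1,2,3\}\sqcup\{4,5,6\}$, and since $\dim\Omega_x$ is local to the vertex $x$ (with $n_x=p$ on $\{1,2,3\}$ and $n_x=q$ on $\{4,5,6\}$, exactly as in the dimension vectors $[p,p,p]$ of $P_1$ and $[q,q,q]$ of $P_2$) one gets $\dim\bm\Omega=\bigl(\dim\Omega_1+\dim\Omega_2+\dim\Omega_3\bigr)+\bigl(\dim\Omega_4+\dim\Omega_5+\dim\Omega_6\bigr)$, the first bracket being the total Schubert dimension attached to $\CK_{\lleft}$ in $P_1$ and the second the one attached to $\CK_{\rright}$ in $P_2$. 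The arrows of $Q_{3,3}$ split as $\{1\to 3,\,2\to 3\}\sqcup\{4\to 5,\,4\to 6\}\sqcup\{4\to 3\}=Q_1(P_1)\sqcup Q_1(P_2)\sqcup\{4\to 3\}$, so $\sum_{a:x\to y\in Q_1(Q_{3,3})}|K_x|\,(n_y-|K_y|)$ equals $\bigl(|K_1|(p-|K_3|)+|K_2|(p-|K_3|)\bigr)+\bigl(|K_4|(q-|K_5|)+|K_4|(q-|K_6|)\bigr)+|K_4|(p-|K_3|)$, where the three brackets are respectively the arrow sum defining $\edim_{P_1,B}(\CK_{\lleft},[p,p,p])$, the arrow sum defining $\edim_{P_2,B}(\CK_{\rright},[q,q,q])$, and the contribution of the bridging arrow $4\to 3$, whose source $4$ sits in $\C^q$ and whose target $3$ sits in $\C^p$, giving the factor $|K_4|\,(p-|K_3|)$.

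Subtracting the arrow sum from $\dim\bm\Omega$ and regrouping the $P_1$-data together and the $P_2$-data together then yields $\edim_{Q_{3,3},B}(\CK,\CE)=\edim_{P_1,B}(\CK_{\lleft},[p,p,p])+\edim_{P_2,B}(\CK_{\rright},[q,q,q])-|K_4|\,(p-|K_3|)$, which is the asserted identity \eqref{Q1Q2dim}. The statement is a pure bookkeeping computation, so there is no genuine obstacle; the only point demanding care is tracking which of $p$ or $q$ enters each factor $n_y-|K_y|$ — in particular verifying that the orientation of the cross-arrow is $4\to 3$, so that it contributes $|K_4|$ from its source in $\C^q$ and $p-|K_3|$ from its target in $\C^p$ rather than the other way round, and checking that restricting $\CK$ to the vertices of each $P_i$ literally reproduces the quantity $\edim_{P_i,B}$ of the statement, which it does because the dimension vectors agree and $P_i$ carries exactly the arrows of $Q_{3,3}$ internal to its vertex set.
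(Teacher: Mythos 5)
Your proposal is correct and takes essentially the same route as the paper: the paper's proof simply invokes the general formula $\edim_{Q,B}(\CK,\CE)=\dim\bm\Omega-\sum_{a:x\to y}k_x(n_y-k_y)$ and notes that it implies the identity, while you spell out the vertex-wise splitting of $\dim\bm\Omega$ and the decomposition of the arrow set into $Q_1(P_1)\sqcup Q_1(P_2)\sqcup\{4\to 3\}$, with the bridging arrow correctly contributing $|K_4|(p-|K_3|)$.
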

\begin{proof}
If $\CK=(K_x)_{x\in Q_0}\subset_{Q,B} [{\bf n}], $
 $k_x=|K_x|$,  and ${\bm \Omega}$ is the Schubert variety determined by $\CK$, the general formula (  see \ref{edimQB})
\begin{align*}
  \edim_{Q_{3,3},B}(\CK,\CE) =\dim {\bm \Omega} - \!\!\!\!\sum_{a\colon x\to y\in Q_1}\!\!\!\! k_x (n_y-k_y).
\end{align*}
implies the formula of the lemma.
\end{proof}

Thus using these properties, it is easy to compute  the following examples.

\bigskip

{\bf {Example $Q_{3,3}$}} with dimension vector $[3,3,3,2,2,2]$

There are $38$ elements in  $\Int_0(\CA,3,2)$.
This set is invariant by the group $\Sigma_3\times \Sigma_3$. So we list only a set of representatives
  up to permutations belonging to $\Sigma_3\times \Sigma_3$.
\begin{flalign*}
[\{1\}, \{3\}, \{3\}, \{\}, \{\}, \{\}],\\
 [\{2\}, \{2\}, \{3\}, \{\}, \{\}, \{\}],\\
[\{2\}, \{3\}, \{3\}, \{2\}, \{2\}, \{2\}],\\
 [\{3\}, \{3\}, \{3\}, \{1\}, \{2\}, \{2\}],\\
[\{1, 2\}, \{2, 3\}, \{2, 3\}, \{\}, \{\}, \{\}], \\
[\{1, 2\}, \{2, 3\}, \{2, 3\}, \{2\}, \{2\}, \{2\}],\\
 [\{1, 3\}, \{1, 3\}, \{2, 3\}, \{\}, \{\}, \{\}],\\
 [\{1, 3\}, \{1, 3\}, \{2, 3\}, \{2\}, \{2\}, \{2\}],\\
 [\{1, 3\}, \{2, 3\},
\{2, 3\}, \{1\}, \{2\}, \{2\}], \\
[\{2, 3\}, \{2, 3\}, \{2, 3\}, \{1, 2\}, \{1, 2\}, \{1, 2\}],\\
[\{1, 2, 3\}, \{1, 2, 3\}, \{1, 2, 3\}, \{\}, \{\}, \{\}],\\
 [\{1, 2, 3\}, \{1, 2, 3\}, \{1, 2, 3\}, \{1\}, \{2\}, \{2\}].\\
\end{flalign*}

Observe that  the element $[\{1, 3\}, \{2, 3\},
\{2, 3\},\{1\}, \{2\}, \{2\}] $ gives rise to $9$ $Q$-intersecting sets by permuting separately  the elements $[ \{1\}, \{2\}, \{2\}]$ and $[\{1, 3\}, \{2, 3\},
\{2, 3\}].$
So we indeed obtain $38$ elements given by $3\times 8+9+2+3.$
Again,  it is possible to  see directly that we can construct ``by hand" a subrepresentation of a generic $r\in \CH_Q(\CE)$ with the Schubert positions as required.

Let us give an example. Consider $\CK=  [\{2\}, \{3\}, \{3\}, \{2\}, \{2\}, \{2\}]$ and let us see that for generic $r$,
there is a unique subrepresentation $\CS$ of $r$ in this position. Indeed we are forced to take $S_3=r_{4\to 3}(\C^2)\cap r_{1\to 3} (\C^2)$ a subspace of dimension $1$, and we follow $S_3$ by the maps $r_a$ or their inverse.

It is also easy to check the inductive criterium.

\bigskip

{\bf Example {$Q_{3,3}$}} with dimension vector $[3,3,3,3,3,3]$

 The list of elements in $\Int_0(\CA,3,3)$   contains $103$ elements.
 As it is invariant under   permutations in $\Sigma_3\times \Sigma_3$,
   we list only representatives modulo this group, there are $23$ representatives.

\begin{flalign*}
[\{ 1 \}, \{ 3 \}, \{3\}, \{3\},\{3\},\{3\} ],\\
 [\{2\}, \{2\}, \{3\}, \{3\}, \{3\}, \{3\}], \nonumber \\
 [\{ 3 \}, \{ 3 \}, \{3\}, \{1\},\{3\},\{3\} ], \\
 [\{3\}, \{3\}, \{3\}, \{2\}, \{2\}, \{3\}], \nonumber \\
 [\{2, 3\},\{2, 3\}, \{2, 3\}, \{1, 2\}, \{2, 3\}, \{2, 3\}], \\
  [\{2, 3\}, \{2, 3\}, \{2, 3\}, \{1, 3\}, \{1, 3\}, \{2, 3\}],\\
 [\{1, 2\},\{2, 3\}, \{2, 3\}, \{2, 3\}, \{2, 3\}, \{2, 3\}], \\
  [\{1, 3\}, \{1, 3\}, \{2, 3\}, \{2, 3\}, \{2, 3\}, \{2, 3\}],\\
  [ \{1, 2, 3 \}, \{1, 2, 3\}, \{1, 2, 3\}, \{1, 3\}, \{1, 3\}, \{2, 3\} ],  \nonumber \\
 [ \{1\}, \{ 3 \},\{ 3\}, \{\},\{\},\{\}],  \\
  [\{2\}, \{2\}, \{ 3\}, \{\}, \{\}, \{\}], \\
  [\{2\}, \{3\}, \{3\}, \{2\}, \{3\}, \{3\}],\\
   [\{1, 2\}, \{2, 3\},\{2, 3\}, \{\}, \{\},\{\}], \\
[\{1, 2\}, \{2, 3\}, \{2, 3\}, \{2\}, \{3\}, \{3\}], \\
 [\{1, 3\}, \{1, 3\}, \{2, 3\}, \{\}, \{\}, \{\}],\\
 [\{1, 3\}, \{1, 3\}, \{2, 3\}, \{2\}, \{3\},\{3\}], \\
   [\{1, 3\}, \{2, 3\},\{2, 3\}, \{1\}, \{3\}, \{3\}], \\
 [\{1, 3\}, \{2, 3\}, \{2, 3\}, \{2\}, \{2\}, \{3\}],\\
  [\{1, 3\}, \{2, 3\}, \{2, 3\}, \{1, 3\},\{2, 3\},\{2, 3\}], \\
  [\{1, 2, 3\}, \{1, 2, 3\}, \{1, 2, 3\}, \{\}, \{\}, \{\}], \\
   [\{1, 2, 3\}, \{1, 2, 3 \}, \{1, 2, 3 \}, \{1\}, \{3\}, \{3\}], \\
  [\{1, 2, 3\}, \{1, 2, 3 \}, \{1, 2, 3 \}, \{2\}, \{2\}, \{3\}],\\
   [\{1, 2, 3\}, \{1, 2, 3 \}, \{1, 2, 3 \}, \{1,2\}, \{2, 3\}, \{2, 3\}].\\
\end{flalign*}

So indeed we have $103$ elements obtained as $103=16\times 3+9 \times 6+1.$
As in the previous example   it is possible to  see directly that we can construct ``by hand" a subrepresentation of a generic $r\in \CH_Q(\CE)$ with the Schubert positions as required.

Let us give an example. Consider $\CK=    [\{1, 3\}, \{2, 3\},\{2, 3\}, \{1, 3\}, \{2, 3\}, \{2, 3\}]$.
 The set $\CK$ indexes ${\bm \Omega}=[\Omega_1,\ldots ,  \Omega_6]$ with
$\Omega_1=\Omega_4=\{ X \subset \Gr(2, 3 ), \C e_1 \subset X\}$ and $ \Omega_2= \Omega_3= \Omega_5= \Omega_6= \Gr(2,3).$
Indeed $\Omega_1$ is the $B$ orbit of $\C e_1\oplus \C e_3$ and $e_1$ is invariant by $B$, etc...
Let us see that for generic $r$,
there is a unique subrepresentation $\CS=[S_1,S_2,S_3,S_4,S_5,S_6]$ of $r$ with $S_i\in \Omega_i$.
Indeed, we are forced to take $S_3=r_{1\to 3}(e_1)\cup r_{4\to 3} (e_1)$ a subspace of dimension $2$, ($r$ is generic)
 and we follow $S_3$ by the maps $r_a$ or their inverse as before.
\begin{bibdiv}
\begin{biblist}

\bib{quivershort}{article}{
  author={Baldoni, Velleda},
  author={Vergne, Mich\`{e}le},
  author={Walter, Michael},
  title={Horn inequalities and quivers},
  year={2018},
volume={arXiv:1804.00431},
}

\bib{bvw}{article}{
author={Baldoni, Velleda},
author={Vergne, Mich\`ele},
author={Walter, Michael},
title={Horn conditions for quiver subrepresentations and the moment map},
year={2019},
volume={arXiv:1901.07194},
note={To appear in Pure Appl. Math. Q.}
}

\bib{MR2177198}{article}{
  author={Belkale, Prakash},
  title={Geometric proofs of Horn and saturation conjectures},
  journal={J. Algebraic Geom.},
  volume={15},
  date={2006},
  pages={133--173},
}

\bib{BV82}{article}{
   author={Berline, Nicole},
   author={Vergne, Mich\`ele},
   title={Fourier transforms of orbits of the coadjoint representation},
   conference={
      title={Representation theory of reductive groups},
      address={Park City, Utah},
      date={1982},
   },
   book={
      series={Progr. Math.},
      volume={40},
      publisher={Birkh\"{a}user Boston, Boston, MA},
   },
   date={1983},
   pages={53--67},
}
\bib{BVW}{article}{
  author={Berline, Nicole},
  author={Vergne, Mich\`{e}le},
  author={Walter, Michael},
  title={The Horn inequalities from a geometric point of view},
  journal={Enseign. Math.},
  volume={63},
  pages={403--470},
  year={2017},
}		
\bib{Deltour}{article}{
   author={Deltour, Guillaume},
   title={Kirwan polyhedron of holomorphic coadjoint orbits},
   journal={Transform. Groups},
   volume={17},
   date={2012},
   number={2},
   pages={351--392},
}

\bib{MR1758750}{article}{
  author={Derksen, Harm},
  author={Weyman, Jerzy},
  title={Semi-invariants of quivers and saturation for Littlewood-Richardson coefficients},
  journal={J. Amer. Math. Soc.},
  volume={13},
  date={2000},
  pages={467--479},
}
\bib{MR3646030}{article}{
   author={Duflo, Michel},
   author={Galina, Esther},
   author={Vargas, Jorge A.},
   title={Square integrable representations of reductive Lie groups with
   admissible restriction to ${\rm SL}_2(\R)$},
   journal={J. Lie Theory},
   volume={27},
   date={2017},
   number={4},
}

\bib{MR1095340}{article}{
   author={Duflo, Michel},
   author={Vergne, Mich\`ele},
   title={Orbites coadjointes et cohomologie \'{e}quivariante},
   language={French},
   conference={
      title={The orbit method in representation theory},
      address={Copenhagen},
      date={1988},
   },
   book={
      series={Progr. Math.},
      volume={82},
      publisher={Birkh\"{a}user Boston, Boston, MA},
   },
   date={1990},
   pages={11--60},
}

\bib{GS1982convex}{article}{
  author={Guillemin, Victor},
  author={Sternberg, Shlomo},
  title={Convexity properties of the moment mapping},
  journal={Invent. math.},
  volume={67},
  date={1982},
  pages={419--513},
}

\bib{GS1982qr}{article}{
  author={Guillemin, Victor},
  author={Sternberg, Shlomo},
  title={Geometric quantization and multiplicities of group representations},
  journal={Invent. math.},
  volume={67},
  date={1982},
  pages={515--538},
}	

\bib{MR1337191}{article}{
   author={Hilgert, Joachim},
   author={Neeb, Karl-Hermann},
   author={\O rsted, Bent},
   title={The geometry of nilpotent coadjoint orbits of convex type in
   Hermitian Lie algebras},
   journal={J. Lie Theory},
   volume={4},
   date={1994},
   number={2},
   pages={185--235},
   issn={0949-5932},
   review={\MR{1337191}},
}

\bib{MR0140521}{article}{
  author={Horn, Alfred},
  title={Eigenvalues of sums of Hermitian matrices},
  journal={Pacific J. Math.},
  volume={12},
  date={1962},
  pages={225--241},
}

\bib{JakobsenVergne}{article}{
   author={Jakobsen, Hans Plesner},
   author={Vergne, Mich\`ele},
   title={Restrictions and expansions of holomorphic representations},
   journal={J. Functional Analysis},
   volume={34},
   date={1979},
   number={1},
   pages={29--53},
}

\bib{MR1671451}{article}{
  author={Knutson, Allen},
  author={Tao, Terence},
  title={The honeycomb model of $\GL_n(\C)$ tensor products. I. Proof of the saturation conjecture},
  journal={J. Amer. Math. Soc.},
  volume={12},
  date={1999},
  pages={1055--1090},
}

\bib{NessMumford84}{article}{
  author={Ness, Linda},
  title={A stratification of the null cone via the moment map},
  note={With an appendix by David Mumford},
  journal={Amer. J. Math.},
  volume={106},
  date={1984},
  pages={1281--1329},
}

\bib{MR639200}{article}{
   author={Olshanski, Grigorii Iosifovich},
   title={Invariant cones in Lie algebras, Lie semigroups and the
   holomorphic discrete series},
   language={Russian},
   journal={Funktsional. Anal. i Prilozhen.},
   volume={15},
   date={1981},
   number={4},
}

\bib{MR3349304}{article}{
   author={Paradan, Paul-Emile},
   title={Quantization commutes with reduction in the non-compact setting:
   the case of holomorphic discrete series},
   journal={J. Eur. Math. Soc. (JEMS)},
   volume={17},
   date={2015},
   number={4},
   pages={955--990},
}

\bib{Hornpq}{article}{
author={Paradan, Paul-Emile},
title={Horn(p,q)},
year={2020},
volume={arXiv:2006.08989},
}

\bib{Repka}{article}{
   author={Repka, Joe},
   title={Tensor products of holomorphic discrete series representations},
   journal={Canadian J. Math.},
   volume={31},
   date={1979},
   number={4},
   pages={836--844},
}

\bib{ressayre2010geometric}{article}{
  title={Geometric invariant theory and the generalized eigenvalue problem},
  author={Ressayre, Nicolas},
  journal={Invent. Math.},
  volume={180},
  pages={389--441},
  year={2010},
}

\bib{ressayrepc}{unpublished}{
  title={private communication},
  author={Ressayre, Nicolas},
  year={2018}}
%
\bib{MR1162487}{article}{
  author={Schofield, Aidan},
  title={General representations of quivers},
  journal={Proc. London Math. Soc. (3)},
  volume={65},
  date={1992},
  pages={46--64},
}

	
%

\bib{VW}{article}{
  author={Vergne, Mich\`ele},
  author={Walter, Michael},
  title={Inequalities for Moment Cones of Finite-Dimensional Representations},
  journal={J. Symplectic Geom.},
  volume={15},
  date={2017},
  pages={1209--1250},
}

\bib{MR565090}{article}{
   author={Vinberg, Ernest Borisovitch},
   title={Invariant convex cones and orderings in Lie groups},
   language={Russian},
   journal={Funktsional. Anal. i Prilozhen.},
   volume={14},
   date={1980},
   number={1},
   pages={1--13, 96},
}

%
%
%

\end{biblist}
\end{bibdiv}

\end{document}